\tikzstyle{startstop}=[rectangle, rounded corners, minimum width=3cm, minimum height=1cm,text centered, draw=black]
\tikzstyle{io}=[trapezium, trapezium left angle=70, trapezium right angle=110, minimum width=3cm, minimum height=1cm, text centered, draw=black, fill=blue!30]
\tikzstyle{process}=[rectangle, minimum width=3cm, minimum height=1cm, text centered, draw=black, fill=orange!30]
\tikzstyle{decision} = [diamond, minimum width=3cm, minimum height=1cm, text centered, draw=black, fill=green!30]
\tikzstyle{arrow} = [thick,->,>=stealth]
\def\@tocline#1#2#3#4#5#6#7{\relax
  \ifnum #1>\c@tocdepth 
  \else
    \par \addpenalty\@secpenalty\addvspace{#2}%
    \begingroup \hyphenpenalty\@M
    \@ifempty{#4}{%
      \@tempdima\csname r@tocindent\number#1\endcsname\relax
    }{%
      \@tempdima#4\relax
    }%
    \parindent\z@ \leftskip#3\relax \advance\leftskip\@tempdima\relax
    \rightskip\@pnumwidth plus4em \parfillskip-\@pnumwidth
    #5\leavevmode\hskip-\@tempdima
      \ifcase #1
       \or\or \hskip 1em \or \hskip 2em \else \hskip 3em \fi%
      #6\nobreak\relax
    \dotfill\hbox to\@pnumwidth{\@tocpagenum{#7}}\par
    \nobreak
    \endgroup
  \fi}
\newtheorem{theorem}{Theorem}[section]
\newtheorem{lemma}[theorem]{Lemma}
\newtheorem{proposition}[theorem]{Proposition}
\newtheorem{corollary}[theorem]{Corollary}
\theoremstyle{definition}
\newtheorem{definition}[theorem]{Definition}
\newtheorem{example}[theorem]{Example}
\newtheorem{remark}[theorem]{Remark}
\newcommand{\de}{\partial}
\newcommand{\R}{\mathbb R}
\newcommand{\C}{\mathbb C}
\newcommand{\So}{\mathcal{SR}}
\newcommand{\SF}{\mathbb S}
\newcommand{\HH}{\mathbb H}
\def\Re{{\sf Re}}
\def\Im{{\sf Im}\,}
\numberwithin{equation}{section}
\begin{document}

\title[Spherical Coefficients]{Spherical coefficients of slice regular functions}
\author[A. Altavilla]{Amedeo Altavilla}\address{Altavilla Amedeo:  Dipartimento di Matematica, Universit\`a degli Studi di Bari ``Aldo Moro'', via Edoardo Orabona, 4, 70125,
Bari, Italy.}\email{amedeo.altavilla@uniba.it}

\thanks{Partially supported by GNSAGA of INdAM and by the INdAM project ``Teoria delle funzioni ipercomplesse e applicazioni''. I warmly thank the anonymous referee whose precious comments and suggestions helped improve the presentation of this paper.}


\subjclass[2010]{Primary 30G35; secondary 30B10, 30G30}
\keywords{Slice-regular functions, Power series, quaternions, spherical coefficients, differential relations}

\begin{abstract} 
Given a quaternionic slice regular function $f$, we give a direct and effective way to compute the coefficients
of its spherical expansion at any point. Such coefficients are obtained in terms of spherical and slice derivatives of the function itself. 
Afterwards, we compare the coefficients of $f$ with those of its slice derivative $\de_{c}f$ obtaining a countable family of
differential equations satisfied by any slice regular function. The results are proved in all details and 
are accompanied to several examples.

For some of the results, we also give alternative proofs.
\end{abstract}
\maketitle
\tableofcontents

\section{Introduction}
In basic complex analysis, if $D\subset\C$ is an open domain and $f:D\to\C$ is a holomorphic function, then for any $z_{0}\in D$ there exists a neighborhood $U\subset D$ of $z_{0}$
where $f$ can be written in the following form
$$
f(z)=\sum_{n\geq 0}(z-z_{0})^{n}c_{n}.
$$
A natural way to obtain the coefficient $c_{n}\in\C$ is to compute the $n$-th complex derivative $f$ and evaluate it at $z_{0}$, i.e. $c_{n}=\frac{1}{n!}(\de_{z}^{n}f)(z_{0})$.
In particular, if $f$ is a polynomial of degree $m$, then $c_{N}=0$ for all $N>m$. Moreover by comparing the coefficients $c_{n}$ of $f$ with those $c_{n}'$
of $\de_{z}f$ we get that $c_{n}'=(n+1)c_{n+1}$, i.e. the following silly equation $\de_{z}^{k}(\de_{z}f)=\de_{z}^{k+1}f$. In this paper we will look for
proper generalizations of the previous basic features in the setting of quaternionic slice regularity.

The concept of slice regular function was introduced in~\cite{G-St} to build a theory of quaternionic regular functions that resemble somehow complex holomorphy
and to contain quaternionic polynomials with right (or left) coefficients. 
Quite immediately a Taylor-like expansion centered at real points was obtained and later, with
suitable modifications on the domain of convergence, also at non-real points (see~\cite[Chapter 2]{G-S-St} and references therein).
At a later time, C.~Stoppato~\cite{stoppatosph} showed the possibility to expand any slice regular function on an Euclidean open subset of its domain near any 
point, using suitable polynomials (see also~\cite[Chapter 8.1]{G-S-St}). 
What we will perform in this paper is to show a new effective way to compute the coefficients of such expansion. We will show several examples and obtain, as a byproduct, a 
countable family of differential equations satisfied by slice regular functions.

We start with some standard material setting down the notations, the main definitions and results.
The algebra of quaternions will be denoted by $\HH$. Any element $x\in\HH$ is of the form $x=x_{0}+x_{1}i+x_{2}j+x_{3}k$, where $i,j$ and $k$, follow the usual quaternionic
multiplicative rules. For any $x\in\HH$ we set $x^{c}=x_{0}-(x_{1}i+x_{2}j+x_{3}k)$ to be its \textit{conjugate} and $\Re(x)=(x+x^{c})/2$, $\Im(x)=(x-x^{c})/2$ to be its
real and imaginary part, respectively. The norm of any quaternion $x$ is defined to be $|x|=\sqrt{x\cdot x^{c}}\in\R$ and, if $x\neq 0$ its inverse is given by 
$x^{-1}=x^{c}/|x|^{2}$.
Now, $i,j$ and $k$ are not the only imaginary units in $\HH$, indeed, for any $\alpha_{1},\alpha_{2},\alpha_{3}\in\R$ such that $\alpha_{1}^{2}+\alpha_{2}^{2}+\alpha_{3}^{2}=1$,
we have that $\alpha_{1}i+\alpha_{2}j+\alpha_{3}k$ is an imaginary unit as well. For this reason we introduce the following notation
$$
\SF:=\{x\in\HH\,|\,x^{2}=-1\}=\{\alpha_{1}i+\alpha_{2}j+\alpha_{3}k\,|\,\alpha_{1},\alpha_{2},\alpha_{3}\in\R,\,\alpha_{1}^{2}+\alpha_{2}^{2}+\alpha_{3}^{2}=1\}.
$$
Thanks to the previous observation, it is easy to see that 
$$
\HH=\bigcup_{I\in\SF}\C_{I},
$$
where $\C_{I}=\{\alpha+\beta I\in\HH\,|\,\alpha,\beta\in\R\}$, showing the \textit{slice} nature of $\HH$.
In particular, any $x\in\HH$ can be written as
$$
x=\Re(x)+\Im(x)=\Re(x)+\frac{\Im(x)}{|\Im(x)|}|\Im(x)|=\alpha+I\beta,
$$
where $\alpha=\Re(x)$, $\beta=|\Im(x)|$, $I=\Im(x)/|\Im(x)|$ and, if $\Im(x)=0$, then of course $\beta=0$ and $I$ is any element in $\SF$.

We now pass to define the class of slice regular functions following the approach of~\cite{G-P}.
Let $D\subset\C$ be an open domain such that $D=\bar D$ and $\Omega_{D}$ its circularization defined as
$$
\Omega_{D}:=\{\alpha+I\beta\in\HH\,|\,\alpha+i\beta\in D,\,I\in\SF\}.
$$
Sets of the form $\Omega_{D}$ will be called \textit{symmetric sets}.
The definition of slice regularity needs one last preliminary observation: the real tensor product $\HH\otimes\C$ inherits from $\C$ a natural complex structure, namely if
$$\HH\otimes\C=\{x+\sqrt{-1}y\,|\,x,y\in\HH\},$$
then such a complex structure is exactly the multiplication by $\sqrt{-1}$. Analogously, the standard conjugation in $\C$ induces the following conjugation in $\HH\otimes\C$:
we have $\overline{x+\sqrt{-1}y}=x-\sqrt{-1}y$. 

\begin{definition}
Let $D\subset\C$ be an open domain such that $D=\bar D$ and $\Omega_{D}\subset\HH$ denotes its circularization. 
\begin{enumerate}
\item A function $F=F_{0}+\sqrt{-1}F_{1}: D\to\HH\otimes\C$ is said to be a \textit{stem function} if $F(\bar z)=\overline{F(z)}$, for any $z\in D$.
\item Any stem function $F=F_{0}+\sqrt{-1}F_{1}:D\to\HH\otimes\C$ induces a (unique) \textit{left slice function} $f:\Omega_{D}\to\HH$ in the following way
$$
f(\alpha+I\beta)=F_{0}(\alpha+i\beta)+IF_{1}(\alpha+I\beta).
$$
We will write $f=\mathcal{I}(F)$ to indicate that $F$ induces $f$. The set of slice functions defined on $\Omega_{D}$ of class $\mathcal{C}^{k}$, with $k\in\mathbb{N}\cup\{\omega\}$, will be denoted
by $\mathcal{S}^{k}(\Omega_{D})$.
\item Let $f=\mathcal{I}(F)\in\mathcal{S}^{1}(\Omega_{D})$ be any slice function of class $\mathcal{C}^{1}$. We say that $f$ is \textit{left slice regular} if its defining stem function
is holomorphic with respect to the standard complex structure in $\C$ and the one previously defined in $\HH\otimes\C$. The set of slice regular functions defined on $\Omega_{D}$
will be denoted by $\So(\Omega_{D})$.
\end{enumerate}
\end{definition}
In what follows we will omit the adjective ``left'' when talking of sliceness or slice regularity. The word ``unique'' in the parenthesis of point $(2)$ is justified by the fact that the map $F\mapsto\mathcal{I}(F)$ is bijective~\cite{G-P}. An analogous theory of \textit{right} slice regular function can be obviously defined.
This definition of regularity is equivalent to the one given in~\cite{G-St, G-S-St} on domains $\Omega=\Omega_{D}$, with
$D\cap\mathbb{R}\neq \emptyset$. New types of domains are considered in~\cite{douren1,douren2,dourensabadini, G-SJMAA,gentilistoppatoPAMS}.  The theory has also been generalized to the several variable case~\cite{ghiloniperottiseveral1,ghiloniperottiseveral2}. Very recently the definition of regularity was further 
analyzed in~\cite{mongodi} and in~\cite{ghilonipoly}.

We now introduce differential operators related to slice regularity.
\begin{definition}\label{sphericalderivativevalue}
Let $D\subset\C$ be an open domain such that $D=\bar D$ and $\Omega_{D}\subset\HH$ denotes its circularization, $x=\alpha+I\beta\in\Omega_{D}$ and $z\in D$ be such that $z=\alpha+i\beta$. 
\begin{enumerate}
\item  Let $f=\mathcal{I}(F_{0}+\sqrt{-1}F_{1}):\Omega_{D}\to\HH$ be any slice function. We define the \textit{spherical value} of $f$ as the slice function $v_{s}f:\Omega_{D}\to\HH$ defined by
$$
(v_{s}f)(x)=\mathcal{I}(F_{0})(x)=\frac{f(x)+f(x^{c})}{2}=F_{0}(\alpha+i\beta).
$$
We define the \textit{spherical derivative} of $f$ as the slice function $\de_{s}f:\Omega_{D}\setminus\R\to\HH$ defined by
$$
(\de_{s}f)(x)=\mathcal{I}\left(\frac{2iF_{1}}{z-\bar z}\right)(x)=(2\Im(x))^{-1}(f(x)-f(x^{c}))=\frac{F_{1}(\alpha+i\beta)}{\beta}.
$$
\item Let $f=\mathcal{I}(F)\in\mathcal{S}^{1}(\Omega_{D})$ be any slice function of class $\mathcal{C}^{1}$. We define the functions $\bar\de_{c}f,\de_{c}f:\Omega_{D}\to\HH$ as
$$
(\bar\de_{c}f)(x)=\mathcal{I}\left(\frac{\de F}{\de \bar z}\right)(x),\qquad (\de_{c}f)(x)=\mathcal{I}\left(\frac{\de F}{\de z}\right)(x).
$$
The function $\partial_{c}f$ is called the \textit{slice derivative} of $f$.
\end{enumerate}
\end{definition}

It was proven in~\cite{Perotti} that the spherical derivative operator is indeed a differential operator when applied to slice functions. It is in fact equal to some other differential operator
related to the operators $\bar\partial_{c}, \partial_{c}$ and to the Cauchy-Dirac operator. Clearly, a differentiable slice function $f$ is slice regular if and only if $\bar\de_{c}f$ vanishes identically.

Regarding the notation, in recent works the spherical value, the spherical derivative and the slice derivatives are often written as
$$
v_{s}f=f_{s}^{\circ},\qquad\de_{s}f=f_{s}',\qquad\bar\de_{c}f=\frac{\de f}{\de x^{c}},\qquad\de_{c}f=\frac{\de f}{\de x}.
$$
We will use the older notations because they fit better in the computations we are going to perform.

It is well known that the notion of slice regularity is not closed under pointwise product: it is enough to consider the functions $f(x)=xa$ and $g(x)=x$ for a suitable $a\in\HH$; 
the point wise product $(fg)(x)=f(x)g(x)=xax$ it is not even a slice function in general. However, a natural modification of the pointwise product that preserves regularity can be given.
\begin{definition}
Let $f=\mathcal{I}(F)$ and $g=\mathcal{I}(G)$ be two slice functions defined on the same domain $\Omega_{D}$. We define the \textit{slice product} of $f$ and $g$ as
the slice function $f\cdot g:\Omega_{D}\to \HH$ defined as
$$
(f\cdot g)(x)=\mathcal{I}(FG)(x).$$
\end{definition}
We have that if $f$ and $g$ are slice regular functions then their slice product is regular too.
Some classes of slice functions play a remarkable role in relation with the slice product. These are the so called slice preserving and one-slice preserving functions.
\begin{definition} Let $f=\mathcal{I}(F_{0}+\sqrt{-1}F_{1}):\Omega_{D}\to\HH$ be any slice function. We say that $f$ is a \textit{slice preserving} function if $F_{0}$ and $F_{1}$ are real valued functions,
while, if $F_{0}$ and $F_{1}$ take their values in some fixed $\C_{J}\subset\HH$, for some $J\in\SF$, then $f$ is said to be a \textit{one-slice preserving} function or a $\C_{J}$-\textit{preserving} function.
\end{definition}
Let $f:\Omega_{D}\to\HH$ be any slice function. Then, clearly if $f$ is $\C_{J}$-preserving, for some $J$, then
$f(\Omega_{D}\cap\C_{J})\subset \C_{J}$, while if $f$ is slice preserving, then $f(\Omega\cap\C_{I})\subset\C_{I}$, for any $I\in\SF$.
\begin{remark}
Let $f=\mathcal{I}(F)$ and $g=\mathcal{I}(G)$ be two slice functions defined on the same domain $\Omega_{D}$. If $f$ is slice preserving we have that $f\cdot g=g\cdot f=fg$.
If $f$ and $g$ are both $\C_{J}$-preserving, for some $J$, then $f\cdot g=g\cdot f$ and $(f\cdot g)_{|\Omega_{D}\cap\C_{J}}=fg$.
\end{remark}
Algebraic properties of such classes of functions are widely studied in~\cite{A-dF,A-dFAMPA,A-dFLAA,A-dFConc}.

We now turn back to the main object of this paper. At the beginning of this introduction we talked about a Taylor-like expansion obtained for slice regular functions.
This was obtained by means of the use of the polynomials $(x-q_{0})$ and its slice powers $(x-q_{0})^{\cdot n}$. The other approach exploits instead the following polynomials.
\begin{definition}\label{characteristicpolynomial}
Let $q_{0}=\alpha+I\beta$ be any non-real quaternion. We define the \textit{characteristic polynomial} of $q_{0}$ as the slice regular polynomial $\Delta_{q_{0}}:\HH\to\HH$ defined
by 
$$
\Delta_{q_{0}}(x)=(x-q_{0})\cdot (x-q_{0}^{c})=x^{2}-x(q_{0}+q_{0}^{c})+q_{0}q_{0}^{c}=x^{2}-2\Re(q_{0})x+|q_{0}|^{2}=x^{2}-2x\alpha+\alpha^{2}+\beta^{2}.
$$
\end{definition}

For any $q_{0}=\alpha+I\beta\in\HH\setminus\R$ we have that $\Delta_{q_{0}}$ is a slice preserving function which identically vanishes on the $2$-sphere $\SF_{q_{0}}=\alpha+\SF \beta$.

We recall the following result.
\begin{theorem}\cite[Theorem 4.1]{stoppatosph}\label{stoppatothm}
Let $f$ be a quaternionic slice regular function on a symmetric domain $\Omega_{D}$ such that $\Omega_{D}\cap\R\neq\emptyset$ and let $q_{0}=\alpha_{0}+I_{0}\beta_{0}\in\Omega$ and $R>0$ be such that
$$
U(q_{0},R):=\{x\in\HH\,|\,|\Delta_{q_{0}}(x)|<R^{2}\}\subset \Omega_{D}.
$$
Then there exist $s_{n}\in\HH$ such that
\begin{equation}\label{sphericalexpansion}
f(x)=\sum_{n\in\mathbb{N}}\Delta_{q_{0}}^{n}(x)[s_{2n}+(x-q_{0})s_{2n+1}],
\end{equation}
for all $x\in U(q_{0},R)$.
\end{theorem}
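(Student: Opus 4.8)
The plan is to transport the statement to the level of stem functions, where slice regularity becomes ordinary holomorphy and the slice product becomes the pointwise product. Writing $f=\mathcal{I}(F)$ with $F=F_{0}+\sqrt{-1}F_{1}\colon D\to\HH\otimes\C$ holomorphic and satisfying the stem symmetry $F(\bar z)=\overline{F(z)}$, I first record that the slice preserving polynomial $\Delta_{q_{0}}$ corresponds to the scalar, real coefficient polynomial $\delta(z):=(z-z_{0})(z-\bar z_{0})=(z-\alpha_{0})^{2}+\beta_{0}^{2}$, where $z_{0}=\alpha_{0}+i\beta_{0}$, and that $|\Delta_{q_{0}}(\alpha+I\beta)|=|\delta(\alpha+i\beta)|$ because $\delta$ has real coefficients. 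Hence $U(q_{0},R)$ is exactly the set induced by the Cassini region $D_{R}:=\{z\in D:|\delta(z)|<R^{2}\}$, and the hypothesis $U(q_{0},R)\subset\Omega_{D}$ reads $D_{R}\subset D$. Since multiplication by $\Delta_{q_{0}}$ (slice preserving) and right multiplication by the constants $s_{n}$ both commute with $\mathcal{I}$, the claimed identity is equivalent to the stem expansion $F(z)=\sum_{n}\delta(z)^{n}\big(s_{2n}+(z-q_{0})s_{2n+1}\big)$ on $D_{R}$, where $s_{n}\in\HH$ are regarded as constants in $\HH\otimes 1$.

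The analytic heart is the following expansion lemma, which I would prove first. Any holomorphic $V$-valued function $g$ on $D_{R}$ (here $V=\HH\otimes\C$) can be written uniquely as $g(z)=\sum_{n}\delta(z)^{n}\big(a_{n}+(z-\alpha_{0})b_{n}\big)$, with $a_{n},b_{n}\in V$, converging uniformly on compact subsets of $D_{R}$. To see this, introduce the involution $\sigma(z)=2\alpha_{0}-z$, which fixes $\delta$ (so $\sigma(D_{R})=D_{R}$) and sends $z-\alpha_{0}\mapsto-(z-\alpha_{0})$. Decomposing $g=g^{+}+g^{-}$ into its $\sigma$-even and $\sigma$-odd parts, $g^{+}$ is a holomorphic function of $w=\delta(z)=(z-\alpha_{0})^{2}+\beta_{0}^{2}$, while $g^{-}/(z-\alpha_{0})$ is $\sigma$-even and hence also a holomorphic function of $w$; the key point is that, through the proper $2$-to-$1$ map $z\mapsto w$, both descend to genuinely holomorphic functions on the full disc $\{|w|<R^{2}\}$, so their Taylor series in $w$ produce the $a_{n}$ and $b_{n}$. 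Uniqueness follows by reading off the $\sigma$-even and $\sigma$-odd parts and invoking uniqueness of Taylor coefficients in $w$.

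Next I would upgrade the coefficients from $V$ to $\HH$. Applying the stem symmetry $F(\bar z)=\overline{F(z)}$ to the expansion, using $\delta(\bar z)=\overline{\delta(z)}$ and that conjugation on $\HH\otimes\C$ is an $\R$-algebra automorphism fixing $\HH\otimes 1$, I obtain a second expansion of $\overline{F(z)}$ of the same form but with coefficients $\overline{a_{n}},\overline{b_{n}}$; comparing the two resulting expansions in the variable $\bar z$ and invoking uniqueness forces $a_{n}=\overline{a_{n}}$ and $b_{n}=\overline{b_{n}}$, i.e.\ $a_{n},b_{n}\in\HH$. Finally I would pass from the base point $\alpha_{0}$ to $q_{0}$ by writing $(z-\alpha_{0})=(z-q_{0})+I_{0}\beta_{0}$ and absorbing the correction, so that $s_{2n+1}:=b_{n}\in\HH$ and $s_{2n}:=a_{n}+I_{0}\beta_{0}\,b_{n}\in\HH$ (the product $I_{0}\beta_{0}\,b_{n}$ stays in $\HH$). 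Applying the continuous map $\mathcal{I}$ term by term, which preserves the uniform-on-compacta convergence, yields the desired series for $f$ on $U(q_{0},R)$.

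The main obstacle is the analytic lemma, specifically establishing holomorphy of the descended functions on the entire disc $\{|w|<R^{2}\}$ rather than merely near $w=0$: the change of variables $z\mapsto w=\delta(z)$ is a branched double cover, branched over the critical value $w=\beta_{0}^{2}$, and the Cassini region $D_{R}$ is disconnected precisely when $R<\beta_{0}$ and connected (containing the branch point $z=\alpha_{0}$) when $R>\beta_{0}$. The $\sigma$-even/odd splitting is exactly what tames this: it turns the two-valuedness into an honest function of $w$ and reduces the branch point to a routine removable-singularity argument (or, in the disconnected case, to an unbranched two-sheeted argument), after which everything else is bookkeeping.
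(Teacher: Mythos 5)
Your proof is correct, but be aware that the paper itself does not prove this statement: it is quoted from \cite{stoppatosph}, and the introduction only sketches that original argument, which builds the coefficients as iterated $*$-division remainders $R_{q_{0}}f$, $R_{\bar q_{0}}R_{q_{0}}f,\dots$ and then proves convergence on Cassini balls by norm estimates; that route lives in the Gentili--Struppa framework and genuinely uses $\Omega_{D}\cap\R\neq\emptyset$. Your argument is a different and in fact more general one: descending to the stem function, splitting under the involution $\sigma(z)=2\alpha_{0}-z$, and pushing the $\sigma$-even and $\sigma$-odd parts forward along the proper degree-two map $z\mapsto\delta(z)$ reduces everything to the classical Taylor expansion of a holomorphic $\HH\otimes\C$-valued function on the disc $\{|w|<R^{2}\}$, and the hypothesis $\Omega_{D}\cap\R\neq\emptyset$ is never needed --- consistent with the paper's remark that later treatments (e.g.\ \cite{G-P-series}) drop it. You correctly isolate the two points that carry the weight: that $U(q_{0},R)\subset\Omega_{D}$ forces $D_{R}$ to be the \emph{full} preimage $\delta^{-1}(\{|w|<R^{2}\})$ (without which the descent to the whole disc would fail), and the reality of the coefficients via the stem symmetry plus uniqueness; the base-point shift $s_{2n}=a_{n}+I_{0}\beta_{0}b_{n}$, $s_{2n+1}=b_{n}$ is also written in the correct order, since $z-\alpha_{0}$ is central in $\HH\otimes\C$ while $I_{0}\beta_{0}$ is not. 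What the cited proof buys in exchange is a division-theoretic meaning for the $s_{n}$ as remainders; your proof buys brevity and generality but yields the coefficients only as Taylor coefficients of the descended functions, so one would still need the computations of the paper (culminating in Theorem~\ref{sphcoeff}) to identify them as $\frac{1}{k!}((\de_{c}\de_{s})^{k}f)(q_{0})$ and $\frac{1}{k!}(\de_{s}(\de_{c}\de_{s})^{k}f)(q_{0})$.
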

Sets of type $U(q_{0},R)$ are called \textit{Cassini balls}.
\begin{definition}
Let $f:\Omega_{D}\to\HH$ be any slice regular function and $q_{0}\in\Omega_{D}\setminus \R$. The expression on the right hand side of Formula~\eqref{sphericalexpansion}
is called \textit{spherical expansion} of $f$ at $q_{0}$ and its coefficients $s_{n}\in\HH$ are called \textit{spherical coefficients} of $f$ at $q_{0}$.
\end{definition}

Such an expansion was later obtained also in other, more general, contexts~\cite{G-SJMAA,G-P-series,GPSadvances,GPSdivision} allowing to drop
the hypothesis $\Omega_{D}\cap\R\neq\emptyset$.
However, in none of these work
an immediate and clear representation of the spherical coefficients in terms of some differential operator applied to $f$ was given. We shortly review some results.
At first~\cite{stoppatosph,G-S-St} $s_{n}$ were given in terms of suitable quotients $(R_{\bar q_{0}} R_{q_{0}})^{n}f(q_{0})$ and $R_{q_{0}}(R_{\bar q_{0}} R_{q_{0}})^{n}f(\bar q_{0})$ defined as follows
\begin{align*}
f(x)&=f(q_{0})+(x-q_{0})\cdot R_{q_{0}}f(x)\\
f(x)&=f(q_{0})+(x-q_{0})\cdot R_{q_{0}}f(\bar q_{0})+[(x-\alpha_{0})^{2}+\beta_{0}^{2}]R_{\bar q_{0}}R_{q_{0}}f(x)\\
f(x)&=f(q_{0})+(x-q_{0})\cdot R_{q_{0}}f(\bar q_{0})+[(x-\alpha_{0})^{2}+\beta_{0}^{2}]R_{\bar q_{0}}R_{q_{0}}f(q_{0})+[(x-\alpha_{0})^{2}+\beta_{0}^{2}](x-q_{0})R_{q_{0}}R_{\bar q_{0}}R_{q_{0}}f(x)\\
\dots&
\end{align*}
This representation was sufficient to obtain Theorem~\ref{stoppatothm}, but is quite hard to excerpt some geometryc or analytic information from it.

Later, a quite standard integral representation~\cite[Theorem 5.4]{G-P-series} together with a 
more sophisticated~\cite[Theorem 3.7]{G-P-series} one involving slice derivatives  and an infinite matrix of coefficients was given.
In particular the authors write the $n$-th slice derivative of the function $f$ evaluated at $q_{0}$ as a suitable linear combination of a finite number of spherical coefficients of $f$ at $q_{0}$.

Another relation between spherical coefficients and slice derivatives was recently given in~\cite[Theorem 8.1]{GPSadvances}, but still not handy enough to actually compute coefficients.

In what follows we will give a new way to obtain the spherical coefficients of a slice regular function $f$ defined on a symmetric domain $\Omega_{D}$ that is somehow more convenient in explicit computations. This method simply involves spherical and slice derivatives of a slice regular functions, computed alternately. We will
be more clear in a moment. Firstly we describe the structure of the paper. 
The next section contains mostly computational preliminaries on the action of $\de_{s}$, $\de_{c}$, $(\de_{c}\de_{s})^{k}$ on a slice (regular) function. We can say that the main result in this section is a
general formula to compute $(\de_{c}\de_{s})^{k}$ of a slice product (see Proposition~\ref{main}). Thanks to this formula and to some other observation we are able
to determine in Lemma~\ref{lemmapowers} and Corollary~\ref{cor1}
how many alternated actions of $\partial_{c}$ and $\partial_{s}$ are needed to kill specific slice regular polynomials.

In Section~\ref{alternate}, we apply the result obtained in Proposition~\ref{main} to obtain the exact values of $((\de_{c}\de_{s})^{k}\Delta_{q_{0}}^{p}(x))$, 
$(\de_{s}(\de_{c}\de_{s})^{k}\Delta_{q_{0}}^{p}(x))$, $((\de_{c}\de_{s})^{k}\Delta_{q_{0}}^{p}(x)(x-q_{0}))$ and of $(\de_{s}(\de_{c}\de_{s})^{k}\Delta_{q_{0}}^{p}(x)(x-q_{0}))$
at $x=q_{0}$, for $q_{0}\in\HH\setminus\R$ and any $k,p\in\mathbb{N}$. At the end we add a convenient table containing all the non-vanishing results and the respective references 
(see Subsection~\ref{tabel1}).
We remark that, in the presentation of this section, not only the correctedness of the of the results and of their proofs was taken into account, but also their sequence.
Indeed, after Proposition~\ref{main}, the order in which the results are presented can be modified only by 
small changes (see the flow chart in Appendix~\ref{flow}).

Afterwards, in Section~\ref{coefficients} we state our first main result Theorem~\ref{sphcoeff}: we prove that the spherical coefficients of any slice regular function
defined on a symmetric domain, at any point, are given by
$$
\begin{cases}
s_{2k}=\frac{1}{k!}((\de_{c}\de_{s})^{k}f)(q_{0}),\\
s_{2k+1}=\frac{1}{k!}(\de_{s}(\de_{c}\de_{s})^{k}f)(q_{0}),
\end{cases}
$$
obtaining the first analogy with the complex case.
Moreover we also show explicit computations of some interesting example
involving also idempotent functions and slice-polynomial functions. 
Among other results, we obtain a corollary indicating how to check if a zero of a slice regular function is spherical and what is its spherical multiplicity.

The work done in the previous sections allows us to compare the spherical coefficients of a slice regular function $f$ with those of its first slice derivative $\de_{c}f$
(see Theorem~\ref{coeffslder}), obtaining a non-trivial formula generalizing the equality $\de_{z}^{k}(\de_{z}f)=\de_{z}^{k+1}f$ valid for holomorphic functions. This is 
the content of Section~\ref{slicederi}, where, in Theorem~\ref{infinite} we also obtain as byproduct a countable family of differential equations satisfied by any slice regular function,
the first of which was already obtained in~\cite{altavilladiff} and reads as
$$(\de_{c}f)(x)=(\de_{s}f)(x)+2\Im(x)((\de_{c}\de_{s})f)(x).$$
This sections ends with several other explicit examples and with a small observation on the lack of harmonicity properties for the functions 
$\de_{s}(\de_{c}\de_{s})^{k}f$ for $k\geq 1$. 

After an appendix with the aforementioned flow chart of the results in Section~\ref{alternate} we added another appendix with an alternative proof of Theorem~\ref{coeffslder} given by means of computations analogous to those performed in Section~\ref{alternate}. As in the other technical section, we collect the main results in a table.

\section{Preliminaries}
We recall that, for any slice function $f$ defined on a symmetric domain we have the following equalities 
\begin{equation}\label{basicslice}
\partial_{s}^{2}f\equiv 0,\quad \partial_{s}v_{s}f\equiv 0,\quad v_{s}\partial_{s}f=\partial_{s}f,\quad v_{s}^{2}f=v_{s}f.
\end{equation}
For the first two equalities see~\cite[p. 1671]{G-P}, while the third and fourth equalities are true because
the spherical derivative and the spherical value of a slice function are slice functions induced by stem functions with zero $\sqrt{-1}$-part (see Definition~\ref{sphericalderivativevalue}).

Moreover, if $f$ and $g$ are slice functions on a symmetric domain, then~\cite[Propositions 1.40 and 1.41]{G-S-St}, \cite[Section 5 and Proposition 11]{G-P}
\begin{equation}\label{leibniz}
\partial_{s}(f\cdot g)=\partial_{s}f\cdot v_{s}g+v_{s}f\cdot \partial_{s}g,\qquad \de_{c}(f\cdot g)=\de_{c}f\cdot g+f\cdot \de_{c}g.
\end{equation}
If we add the regularity hypothesis, we have the following immediate result.

\begin{lemma}\label{derivativesphvalue}
Let $f$ be a slice regular function defined on a symmetric domain. Then 
$$\de_{c}v_{s}f=\frac{1}{2}\de_{c}f.$$
\end{lemma}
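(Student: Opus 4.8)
The plan is to reduce everything to the defining stem functions and to exploit the Cauchy--Riemann equations encoded in slice regularity. Write $f=\mathcal{I}(F)$ with $F=F_{0}+\sqrt{-1}F_{1}\colon D\to\HH\otimes\C$, and use real coordinates $z=\alpha+i\beta$ on $D$. Slice regularity means that $F$ is holomorphic for the two complex structures, which in these coordinates is the system $\de_{\alpha}F_{0}=\de_{\beta}F_{1}$ and $\de_{\beta}F_{0}=-\de_{\alpha}F_{1}$, the structure $\sqrt{-1}$ of $\HH\otimes\C$ playing the role of the target complex unit. By Definition~\ref{sphericalderivativevalue} the two sides of the claimed identity are $\de_{c}f=\mathcal{I}(\de F/\de z)$ and $\de_{c}v_{s}f=\mathcal{I}(\de F_{0}/\de z)$, where the stem function of $v_{s}f$ is the truncation $F_{0}+\sqrt{-1}\cdot 0$. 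Since $\mathcal{I}$ is linear and bijective, it suffices to prove the stem-level identity $\de F_{0}/\de z=\tfrac12\,\de F/\de z$.

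Next I would compute the two Wirtinger derivatives by hand, writing $\de/\de z=\tfrac12(\de_{\alpha}-\sqrt{-1}\,\de_{\beta})$. The regularity relations collapse $\de F/\de z$ to the purely real derivative $\de_{\alpha}F=\de_{\alpha}F_{0}+\sqrt{-1}\,\de_{\alpha}F_{1}$, because each Cauchy--Riemann equation makes a $\de_{\beta}$-contribution equal to the matching $\de_{\alpha}$-one. For the spherical value one differentiates the truncated stem $F_{0}+\sqrt{-1}\cdot 0$ directly, obtaining $\de F_{0}/\de z=\tfrac12(\de_{\alpha}F_{0}-\sqrt{-1}\,\de_{\beta}F_{0})$; substituting the relation $\de_{\beta}F_{0}=-\de_{\alpha}F_{1}$ turns this into $\tfrac12(\de_{\alpha}F_{0}+\sqrt{-1}\,\de_{\alpha}F_{1})=\tfrac12\,\de_{\alpha}F$. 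Comparing with the previous expression gives $\de F_{0}/\de z=\tfrac12\,\de F/\de z$, and applying $\mathcal{I}$ concludes.

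The main point to watch is that $v_{s}f$ is in general \emph{not} slice regular: its stem $F_{0}+\sqrt{-1}\cdot 0$ fails the Cauchy--Riemann system, so one cannot simplify $\de F_{0}/\de z$ by invoking regularity and must keep the $\de_{\beta}F_{0}$ term, converting it through the regularity of the original $f$. A secondary, purely notational, source of care is keeping the source unit $i$ of $z=\alpha+i\beta$ separate from the target complex structure $\sqrt{-1}$ on $\HH\otimes\C$ throughout the Wirtinger computations.

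I note in passing an alternative route that I would \emph{avoid}: using the splitting $f=v_{s}f+\Im\cdot\de_{s}f$ together with the Leibniz rules~\eqref{leibniz} and the elementary fact $\de_{c}\Im=\tfrac12$, the claim becomes equivalent to the identity $\de_{c}f=\de_{s}f+2\Im\cdot(\de_{c}\de_{s})f$. However, that is precisely the differential equation recalled in the introduction and established only later in the paper, so using it here would be circular; hence the direct stem-function argument above is preferable.
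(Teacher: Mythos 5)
Your proof is correct and follows essentially the same route as the paper: both compute $\de_{c}v_{s}f$ via the Wirtinger operator $\tfrac12(\de_{\alpha}-I\de_{\beta})$ applied to $F_{0}$ and then use the Cauchy--Riemann relations of the regular function $f$ (in particular $\de_{\beta}F_{0}=-\de_{\alpha}F_{1}$) to rewrite the result as $\tfrac12\de_{c}f$; your version merely phrases the identical computation at the stem-function level. Your cautionary remarks (that $v_{s}f$ is not itself regular, and that the splitting $f=v_{s}f+\Im\cdot\de_{s}f$ route would be circular here) are accurate but do not change the substance.
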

\begin{proof}
The proof is quite straightforward and relies on the following computation. 
If $f=\mathcal{I}(F_{0}+\sqrt{-1}F_{1})$, then $v_{s}f=\mathcal{I}(F_{0})$. Therefore, if $x=\alpha+I\beta$
\begin{align*}
\left(\de_{c}v_{s}f\right)(x)&=\left(\frac{1}{2}\left(\frac{\partial}{\partial \alpha}-I\frac{\partial}{\partial\beta}\right)F_{0}(\alpha,\beta)\right)\\
&=\frac{1}{2}\left(\frac{1}{2}\left(\frac{\partial}{\partial\alpha}F_{0}(\alpha,\beta)+\frac{\partial}{\partial\beta}F_{1}(\alpha,\beta)\right)+\frac{I}{2}\left(\frac{\partial}{\partial \alpha}F_{1}(\alpha,\beta)-\frac{\partial}{\partial\beta}F_{0}(\alpha,\beta)\right)\right)\\
&=\frac{1}{2}\left(\frac{1}{2}\left(\frac{\partial}{\partial\alpha}-I\frac{\partial}{\partial\beta}\right)(F_{0}(\alpha,\beta)+IF_{1}(\alpha,\beta))\right)\\
&=\frac{1}{2}\de_{c}f(x).
\end{align*}
\end{proof}
Thanks to the previous lemma we are able to compute the value of the second order differential operator $\de_{c}\partial_{s}$, applied to the product of two slice regular functions
and even to understand its behaviors under following iterations.

\begin{lemma}\label{firstiteration}
Let $f$ and $g$ be two slice regular functions defined on a symmetric domain $\Omega_{D}$. Then for any $x\in\Omega_{D}\setminus\R$ we have
$$
\left(\left(\de_{c}\de_{s}\right)(f\cdot g)\right)(x)=(\left(\de_{c}\de_{s}f\right)\cdot v_{s}g)(x)+\frac{1}{2}\left[(\de_{s}f\cdot \de_{c}g)(x)+(\de_{c}f\cdot \de_{s}g)(x)\right]+(v_{s}f\cdot \left(\de_{c}\de_{s}g\right))(x).
$$
\end{lemma}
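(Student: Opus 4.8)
The plan is to iterate the two Leibniz rules recorded in~\eqref{leibniz} and then collapse the resulting spherical-value terms using Lemma~\ref{derivativesphvalue}. First I would apply the spherical Leibniz rule to the slice product $f\cdot g$, obtaining
$$
\de_{s}(f\cdot g)=\de_{s}f\cdot v_{s}g+v_{s}f\cdot\de_{s}g.
$$
Since $f$ and $g$ are slice regular, they are real-analytic, and hence each of the four factors $\de_{s}f$, $v_{s}g$, $v_{s}f$, $\de_{s}g$ is again a slice function of class $\mathcal{C}^{1}$ on $\Omega_{D}\setminus\R$ (the spherical derivative being defined only off the reals, which is exactly why the statement is restricted to $x\in\Omega_{D}\setminus\R$). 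This regularity is all that is needed to legitimately apply the $\de_{c}$-Leibniz rule, which is valid for arbitrary $\mathcal{C}^{1}$ slice functions and not just for slice regular ones.

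Next I would apply $\de_{c}$ to the displayed identity. Using the second rule in~\eqref{leibniz} on each of the two summands gives
$$
\de_{c}\de_{s}(f\cdot g)=\de_{c}\de_{s}f\cdot v_{s}g+\de_{s}f\cdot\de_{c}v_{s}g+\de_{c}v_{s}f\cdot\de_{s}g+v_{s}f\cdot\de_{c}\de_{s}g.
$$
The two middle terms each carry a factor of the shape $\de_{c}v_{s}(\cdot)$, and this is precisely where Lemma~\ref{derivativesphvalue} enters: it lets me substitute $\de_{c}v_{s}g=\frac{1}{2}\de_{c}g$ and $\de_{c}v_{s}f=\frac{1}{2}\de_{c}f$. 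Making these replacements and grouping the two halves produces exactly the claimed identity.

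The argument is essentially bookkeeping once the two Leibniz rules and Lemma~\ref{derivativesphvalue} are in hand, so I do not anticipate a deep obstacle. The one point I would take care to justify explicitly — and the subtlety worth flagging — is the applicability of the $\de_{c}$-product rule to the \emph{non-regular} slice functions $\de_{s}f$ and $\de_{s}g$: the spherical derivative of a regular function need not itself be regular, but it remains a $\mathcal{C}^{1}$ slice function, and~\eqref{leibniz} was stated in exactly this generality. I would also note that Lemma~\ref{derivativesphvalue} used the regularity of its argument, so it is the regularity of the original factors $f$ and $g$, rather than of their spherical derivatives, that is responsible for the appearance of the coefficient $\frac{1}{2}$ in the two mixed terms.
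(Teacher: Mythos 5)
Your proof is correct and is exactly the argument the paper intends: the paper's own proof consists of the single sentence that the result follows from Formula~\eqref{leibniz} and Lemma~\ref{derivativesphvalue}, and your computation is the natural expansion of that, including the right attribution of the factor $\frac{1}{2}$ to the regularity of $f$ and $g$ via Lemma~\ref{derivativesphvalue}. Your explicit remark on why the $\de_{c}$-Leibniz rule may be applied to the (generally non-regular) slice functions $\de_{s}f$ and $\de_{s}g$ is a useful clarification that the paper leaves implicit.
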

\begin{proof}
The proof is obtained thanks to the equalities in Formula~\eqref{leibniz} and to Lemma~\ref{derivativesphvalue}.
\end{proof}
By~\cite[Theorem 6.3]{Perotti}, if $f$ is a slice regular function, then $\partial_{c}\partial_{s}f$
equals $-1/4$ the four dimensional real Laplacian of $f$. From this point of view the formula in the previous lemma is related to the standard
formula for the Laplacian of a product.

Recalling now the equalities given in Formula~\eqref{basicslice}, we are able to compute the next iterations of the operator $\de_{c}\de_{s}$ applied to 
the product of two slice regular functions. For instance we have the following result
\begin{lemma}\label{lemmanuovo}
Let $f$ and $g$ be two slice regular functions defined on a symmetric domain $\Omega_{D}$. Then for any $x\in\Omega_{D}\setminus\R$ we have\begin{multline}\label{step2}
\left(\de_{c}\de_{s}\right)^{2}(f\cdot g)=\left(\left(\de_{c}\de_{s}\right)^{2}f\right)\cdot v_{s}g+v_{s}f\cdot \left(\left(\de_{c}\de_{s}\right)^{2}g\right)\\
+\frac{1}{2}\Big[\de_{s}\left(\de_{c}\de_{s}\right)f\cdot \de_{c}g+
\left(\left(\de_{c}\de_{s}\right)f\right)\cdot \left(\de_{s}\de_{c}g\right)+
\de_{s}f\cdot \left(\de_{c}\left(\de_{s}\de_{c}g\right)\right)\\
+\left(\de_{c}\left(\de_{s}\de_{c}f\right)\right)\cdot \de_{s}g+
\left(\de_{s}\de_{c}f\right)\cdot \left(\de_{c}\de_{s}g\right)+
\de_{c}f\cdot \left(\de_{s}\left(\de_{c}\de_{s}g\right)\right)
\Big],
\end{multline}
where in the equality we have omitted the evaluation at $x\in\Omega_{D}\setminus\R$.
\end{lemma}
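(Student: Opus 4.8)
The plan is to apply the operator $\de_{c}\de_{s}$ once more to the identity established in Lemma~\ref{firstiteration}, processing the right-hand side term by term with the help of the Leibniz rules~\eqref{leibniz}, the algebraic identities~\eqref{basicslice}, and Lemma~\ref{derivativesphvalue}. Writing the output of Lemma~\ref{firstiteration} as $A+B+C$ with $A=(\de_{c}\de_{s}f)\cdot v_{s}g$, $B=\frac{1}{2}(\de_{s}f\cdot \de_{c}g+\de_{c}f\cdot \de_{s}g)$ and $C=v_{s}f\cdot (\de_{c}\de_{s}g)$, it suffices by linearity to compute $(\de_{c}\de_{s})A$, $(\de_{c}\de_{s})B$ and $(\de_{c}\de_{s})C$ separately and collect the results. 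I emphasize that the intermediate factors such as $\de_{c}\de_{s}f$ or $\de_{s}f$ need not be slice regular, so one may not invoke Lemma~\ref{firstiteration} again; only the Leibniz rules~\eqref{leibniz}, which hold for arbitrary slice functions, and the slice-function identities~\eqref{basicslice} may be used freely, with Lemma~\ref{derivativesphvalue} reserved for the genuinely regular factors $f$ and $g$ acting through their spherical values $v_{s}f$, $v_{s}g$.

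For the pure term $A=u\cdot v_{s}g$ with $u=\de_{c}\de_{s}f$, I would first apply $\de_{s}$: the Leibniz rule gives $\de_{s}(u\cdot v_{s}g)=\de_{s}u\cdot v_{s}(v_{s}g)+v_{s}u\cdot \de_{s}(v_{s}g)$, which collapses to $\de_{s}u\cdot v_{s}g$ using $v_{s}^{2}g=v_{s}g$ and $\de_{s}v_{s}g=0$ from~\eqref{basicslice}. Applying $\de_{c}$ next and using Lemma~\ref{derivativesphvalue} in the form $\de_{c}v_{s}g=\frac{1}{2}\de_{c}g$ yields $(\de_{c}\de_{s})A=((\de_{c}\de_{s})^{2}f)\cdot v_{s}g+\frac{1}{2}\,\de_{s}(\de_{c}\de_{s}f)\cdot \de_{c}g$, which accounts for the first summand of~\eqref{step2} and the first term inside the bracket. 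The term $C=v_{s}f\cdot u'$ with $u'=\de_{c}\de_{s}g$ is handled identically, producing $v_{s}f\cdot((\de_{c}\de_{s})^{2}g)$ together with $\frac{1}{2}\,\de_{c}f\cdot \de_{s}(\de_{c}\de_{s}g)$, i.e.\ the second summand of~\eqref{step2} and the last bracket term.

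The four remaining bracket terms come from $(\de_{c}\de_{s})B$. For $\de_{s}f\cdot \de_{c}g$ I would apply $\de_{s}$ first, using $\de_{s}^{2}f=0$ and $v_{s}\de_{s}f=\de_{s}f$ from~\eqref{basicslice} to get $\de_{s}f\cdot \de_{s}\de_{c}g$, and then $\de_{c}$ to obtain $((\de_{c}\de_{s})f)\cdot(\de_{s}\de_{c}g)+\de_{s}f\cdot(\de_{c}(\de_{s}\de_{c}g))$; the symmetric computation for $\de_{c}f\cdot \de_{s}g$ gives $(\de_{c}(\de_{s}\de_{c}f))\cdot\de_{s}g+(\de_{s}\de_{c}f)\cdot(\de_{c}\de_{s}g)$. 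Multiplying by $\frac{1}{2}$ and summing all three contributions reproduces~\eqref{step2} exactly. The computation is routine, and the only real care needed is bookkeeping: in each Leibniz step one must track which factor carries $v_{s}$ and which carries $\de_{s}$, since the two roles are not interchangeable (the $v_{s}$-factor is the one that triggers the factor $\frac{1}{2}$ via Lemma~\ref{derivativesphvalue}, while the $\de_{s}$-factor triggers vanishing via $\de_{s}^{2}=0$ or $\de_{s}v_{s}=0$). As in Lemma~\ref{firstiteration}, all identities are understood to hold on $\Omega_{D}\setminus\R$, where the spherical derivative is defined.
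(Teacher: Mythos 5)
Your proposal is correct and follows essentially the same route as the paper: apply $\de_{s}$ to the identity of Lemma~\ref{firstiteration} using~\eqref{basicslice} and the $\de_{s}$-Leibniz rule, then apply $\de_{c}$ using its Leibniz rule and Lemma~\ref{derivativesphvalue}. Your term-by-term bookkeeping (including the observation that Lemma~\ref{derivativesphvalue} is only invoked on $v_{s}f$ and $v_{s}g$, where regularity is available) reproduces the paper's computation exactly.
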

\begin{proof}
By applying the spherical derivative operator to the terms in the equality in Lemma~\ref{firstiteration}, thanks to  Formulas~\eqref{basicslice} and~\eqref{leibniz}, we get
\begin{align*}
\de_{s}\left(\left(\de_{c}\de_{s}\right)(f\cdot g)\right)&=
\de_{s}(\left(\de_{c}\de_{s}f\right)\cdot v_{s}g)+\frac{1}{2}\de_{s}\left[(\de_{s}f\cdot \de_{c}g)+(\de_{c}f\cdot \de_{s}g)\right]+\de_{s}(v_{s}f\cdot \left(\de_{c}\de_{s}g\right))\\
&=(\left(\de_{s}(\de_{c}\de_{s})f\right)\cdot v_{s}g)+\frac{1}{2}\left[(\de_{s}f\cdot (\de_{s}\de_{c}g))+((\de_{s}\de_{c}f)\cdot \de_{s}g)\right]+(v_{s}f\cdot (\de_{s}\left(\de_{c}\de_{s}g\right))).
\end{align*}
Again, thanks to equalities in Formulas~\ref{basicslice} and~\ref{leibniz} we obtain
\begin{align*}
\left(\left(\de_{c}\de_{s}\right)^{2}(f\cdot g)\right)
&=\de_{c}(\left(\de_{s}(\de_{c}\de_{s})f\right)\cdot v_{s}g)+\frac{1}{2}\de_{c}\left[(\de_{s}f\cdot (\de_{s}\de_{c}g))+((\de_{s}\de_{c}f)\cdot \de_{s}g)\right]+\de_{c}(v_{s}f\cdot (\de_{s}\left(\de_{c}\de_{s}g\right)))\\
&=\left(\left(\de_{c}\de_{s}\right)^{2}f\right)\cdot v_{s}g+\left(\de_{s}(\de_{c}\de_{s})f\right)\cdot \frac{1}{2}\de_{c}g +\frac{1}{2}\de_{c}f\cdot (\de_{s}\left(\de_{c}\de_{s}g\right)))+v_{s}f\cdot \left(\left(\de_{c}\de_{s}\right)^{2}g\right)+\\
&\,\,\,\,\,\,\,\,\,+\frac{1}{2}\left[((\de_{c}\de_{s}f)\cdot (\de_{s}\de_{c}g))+(\de_{s}f\cdot \de_{c}(\de_{s}\de_{c}g))+(\de_{c}(\de_{s}\de_{c}f)\cdot \de_{s}g)+((\de_{s}\de_{c}f)\cdot (\de_{c}\de_{s}g))\right],
\end{align*}
which, up to a reordering of the terms, gives the thesis.
\end{proof}

After a couple of others iteration one realizes that a non-trivial pattern arises. The following main result exposes such pattern.
\begin{proposition}\label{main}
Let $f$ and $g$ be two slice regular functions defined on a symmetric domain, then, for any $k\in\mathbb{N}\setminus\{0\}$, we have
\begin{align*}
\left(\de_{c}\de_{s}\right)^{k}(f\cdot g)&=\left(\left(\de_{c}\de_{s}\right)^{k}f\right)\cdot v_{s}g+v_{s}f\cdot \left(\left(\de_{c}\de_{s}\right)^{k}g\right)\\
&+\frac{1}{2}\left\{\sum_{h=0}^{k-1}\binom{k-1}{h}\left[(\de_{s}(\de_{c}\de_{s})^{k-(h+1)}f)\cdot (\de_{c}(\de_{s}\de_{c})^{h}g)+(\de_{c}(\de_{s}\de_{c})^{h}f)\cdot (\de_{s}(\de_{c}\de_{s})^{k-(h+1)}g)\right]\right.\\
&\,\,\,\,\,\,\,\,\,\,\left.+\sum_{h=1}^{k-1}\binom{k-1}{h}\left[((\de_{c}\de_{s})^{k-h}f)\cdot ((\de_{s}\de_{c})^{h}g)+((\de_{s}\de_{c})^{h}f)\cdot ((\de_{c}\de_{s})^{k-h}g)\right]\right\}.
\end{align*}
\end{proposition}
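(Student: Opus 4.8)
The plan is to argue by induction on $k$, with the base case $k=1$ being exactly Lemma~\ref{firstiteration} (and Lemma~\ref{lemmanuovo} providing a reassuring check at $k=2$). For the inductive step I would fix $k\geq 1$, assume the stated identity, and compute $(\de_c\de_s)^{k+1}(f\cdot g)=\de_c\big(\de_s((\de_c\de_s)^{k}(f\cdot g))\big)$ by applying first $\de_s$ and then $\de_c$ to the right-hand side of the inductive hypothesis, using only the Leibniz rules in~\eqref{leibniz}, the relations in~\eqref{basicslice}, and Lemma~\ref{derivativesphvalue}.

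First I would apply $\de_s$ termwise. Every factor occurring in the formula is $v_{s}$, $(\de_c\de_s)^m$, $\de_s(\de_c\de_s)^m$, $(\de_s\de_c)^h$ or $\de_c(\de_s\de_c)^h$ applied to $f$ or $g$, and the point is that whenever a factor begins (on the left) with $\de_s$ its spherical derivative vanishes by $\de_s^2=0$ while its spherical value equals it by $v_{s}\de_s=\de_s$; together with $\de_s v_{s}=0$ and $v_{s}^2=v_{s}$ this collapses most terms and produces no genuinely new operator, e.g. $\de_s\big(\de_c(\de_s\de_c)^hg\big)=(\de_s\de_c)^{h+1}g$. After this step the two sums of the hypothesis merge: relabelling the first sum so that its running exponent becomes $h+1$ and combining with the second via Pascal's identity $\binom{k-1}{h-1}+\binom{k-1}{h}=\binom{k}{h}$ gives $\de_s[(\de_c\de_s)^{k}(f\cdot g)]=\de_s(\de_c\de_s)^kf\cdot v_{s}g+v_{s}f\cdot\de_s(\de_c\de_s)^kg+\tfrac12\sum_{h=1}^{k}\binom{k}{h}\big[\de_s(\de_c\de_s)^{k-h}f\cdot(\de_s\de_c)^hg+(\de_s\de_c)^hf\cdot\de_s(\de_c\de_s)^{k-h}g\big]$.

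Next I would apply $\de_c$, using the second Leibniz rule in~\eqref{leibniz} and Lemma~\ref{derivativesphvalue}. Here $\de_c$ merely lengthens the alternating strings, $\de_c\de_s(\de_c\de_s)^m=(\de_c\de_s)^{m+1}$ and $\de_c(\de_s\de_c)^h$ remains admissible, while $\de_c v_{s}g=\tfrac12\de_cg$ turns the two boundary terms into $(\de_c\de_s)^{k+1}f\cdot v_{s}g$, $v_{s}f\cdot(\de_c\de_s)^{k+1}g$ and the half-weighted pair $\tfrac12[\de_s(\de_c\de_s)^kf\cdot\de_cg+\de_cf\cdot\de_s(\de_c\de_s)^kg]$. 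Since $(\de_s\de_c)^0=\mathrm{id}$, this pair is precisely the $h=0$ summand of the first sum in the $(k+1)$-version of the statement, so it slots in without adjustment; splitting each summand of the merged sum by the $\de_c$-Leibniz rule then distributes its two pieces between the two target sums, each retaining the coefficient $\binom{k}{h}$, and collecting everything yields the statement with $k$ replaced by $k+1$.

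The conceptual content is light; the main obstacle is purely organizational bookkeeping — tracking which of the four operator families each factor belongs to after every differentiation, and aligning the index shift and the two Pascal recombinations (once when $\de_s$ merges the sums, once when $\de_c$ redistributes them) so that the boundary and $\tfrac12$-weighted terms land exactly on the $h=0$ slot. A clean way to tame this is to record, once and for all, the effect of $v_{s}$ and of $\de_s$ on each admissible operator string and then apply those reductions mechanically, exactly as the passage from Lemma~\ref{firstiteration} to Lemma~\ref{lemmanuovo} already foreshadows.
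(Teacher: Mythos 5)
Your proposal is correct and follows essentially the same route as the paper: induction on $k$ with base case Lemma~\ref{firstiteration}, applying $\de_{s}$ first (collapsing terms via~\eqref{basicslice}) and then $\de_{c}$ (via~\eqref{leibniz} and Lemma~\ref{derivativesphvalue}), and recombining the binomial coefficients with Pascal's identity. The only cosmetic difference is that you merge the two sums with Pascal's rule immediately after the $\de_{s}$ step, whereas the paper carries both sums through the $\de_{c}$ step and merges at the end; note that after your single merge no second Pascal recombination is actually needed, since $\de_{c}$ merely splits each summand between the two target sums with its coefficient $\binom{k}{h}$ unchanged.
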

\begin{proof}
The proof is done by induction. The base cases ($k=1,2$) are given in Lemmas~\ref{firstiteration} and~\ref{lemmanuovo}. Assume now that the equality is true for
$k\in\mathbb{N}$. We will prove it for $k+1$. Firstly, by spending immediately the inductive hypothesis, we have 
\begin{align*}
\left(\de_{c}\de_{s}\right)^{k+1}(f\cdot g)&=(\de_{c}\de_{s})(\left(\de_{c}\de_{s}\right)^{k}(f\cdot g))\\
&=(\de_{c}\de_{s})\left\{\left(\left(\de_{c}\de_{s}\right)^{k}f\right)\cdot v_{s}g+v_{s}f\cdot \left(\left(\de_{c}\de_{s}\right)^{k}g\right)\right.\\
&+\frac{1}{2}\left\{\sum_{h=0}^{k-1}\binom{k-1}{h}\left[(\de_{s}(\de_{c}\de_{s})^{k-(h+1)}f)\cdot (\de_{c}(\de_{s}\de_{c})^{h}g)+(\de_{c}(\de_{s}\de_{c})^{h}f)\cdot (\de_{s}(\de_{c}\de_{s})^{k-(h+1)}g)\right]\right.\\
&\left.\,\,\,\,\,\,\,\,\,\,\left.+\sum_{h=1}^{k-1}\binom{k-1}{h}\left[((\de_{c}\de_{s})^{k-h}f)\cdot ((\de_{s}\de_{c})^{h}g)+((\de_{s}\de_{c})^{h}f)\cdot ((\de_{c}\de_{s})^{k-h}g)\right]\right\}\right\}.
\end{align*}
Let us apply the spherical derivative $\partial_{s}$, taking into account formulas~\eqref{basicslice}: we obtain
\begin{align*}
\left(\de_{c}\de_{s}\right)^{k+1}(f\cdot g)&=\de_{c}\left\{\left(\de_{s}\left(\de_{c}\de_{s}\right)^{k}f\right)\cdot v_{s}g+v_{s}f\cdot \left(\de_{s}\left(\de_{c}\de_{s}\right)^{k}g\right)\right.\\
&+\frac{1}{2}\left\{\sum_{h=0}^{k-1}\binom{k-1}{h}\left[(\de_{s}(\de_{c}\de_{s})^{k-(h+1)}f)\cdot ((\de_{s}\de_{c})^{h+1}g)+((\de_{s}\de_{c})^{h+1}f)\cdot (\de_{s}(\de_{c}\de_{s})^{k-(h+1)}g)\right]\right.\\
&\left.\,\,\,\,\,\,\,\,\,\,\left.+\sum_{h=1}^{k-1}\binom{k-1}{h}\left[(\de_{s}(\de_{c}\de_{s})^{k-h}f)\cdot ((\de_{s}\de_{c})^{h}g)+((\de_{s}\de_{c})^{h}f)\cdot (\de_{s}(\de_{c}\de_{s})^{k-h}g)\right]\right\}\right\}.
\end{align*}
Using now the Leibniz rule for $\de_{c}$ and Lemma~\ref{derivativesphvalue} we get
\begin{align*}
\left(\de_{c}\de_{s}\right)^{k+1}(f\cdot g)&=\left(\left(\de_{c}\de_{s}\right)^{k+1}f\right)\cdot v_{s}g+v_{s}f\cdot \left(\left(\de_{c}\de_{s}\right)^{k+1}g\right)\\
&+\frac{1}{2}\left\{\left(\de_{s}\left(\de_{c}\de_{s}\right)^{k}f\right)\cdot \de_{c}g+\de_{c}f\cdot \left(\de_{s}\left(\de_{c}\de_{s}\right)^{k}g\right) \right.\\
&+\sum_{h=0}^{k-1}\binom{k-1}{h}\left[
((\de_{c}\de_{s})^{k-h}f)\cdot ((\de_{s}\de_{c})^{h+1}g)+(\de_{s}(\de_{c}\de_{s})^{k-(h+1)}f)\cdot (\de_{c}(\de_{s}\de_{c})^{h+1}g)\right.\\
&\,\,\,\,\,\,\,\,\,\,\,\,\,\,\,\,\,\,\,\,\,\,\,\,\,\,\,\,\,\,\,\,\,\,\,\,\,\,\,\,+\left.(\de_{c}(\de_{s}\de_{c})^{h+1}f)\cdot (\de_{s}(\de_{c}\de_{s})^{k-(h+1)}g)+((\de_{s}\de_{c})^{h+1}f)\cdot ((\de_{c}\de_{s})^{k-h}g)\right]\\
&+\sum_{h=1}^{k-1}\binom{k-1}{h}\left[((\de_{c}\de_{s})^{k-h+1}f)\cdot ((\de_{s}\de_{c})^{h}g)+(\de_{s}(\de_{c}\de_{s})^{k-h}f)\cdot (\de_{c}(\de_{s}\de_{c})^{h}g)\right.\\
&\,\,\,\,\,\,\,\,\,\,\,\,\,\,\,\,\,\,\,\,\,\,\,\,\,\,\,\,\,\,\,\,\,\,\,\,\,\,\,\,\left.+(\de_{c}(\de_{s}\de_{c})^{h}f)\cdot (\de_{s}(\de_{c}\de_{s})^{k-h}g)+((\de_{s}\de_{c})^{h}f)\cdot ((\de_{c}\de_{s})^{k-h+1}g)\right]\Big\}.
\end{align*}
We now reorder the previous sum in a more convenient way as follows:
\begin{align*}
\left(\de_{c}\de_{s}\right)^{k+1}(f\cdot g)&=\left(\left(\de_{c}\de_{s}\right)^{k+1}f\right)\cdot v_{s}g+v_{s}f\cdot \left(\left(\de_{c}\de_{s}\right)^{k+1}g\right)\\
&+\frac{1}{2}\Big\{\left(\de_{s}\left(\de_{c}\de_{s}\right)^{k}f\right)\cdot \de_{c}g+\de_{c}f\cdot \left(\de_{s}\left(\de_{c}\de_{s}\right)^{k}g\right)\\
&+\sum_{h=0}^{k-1}\binom{k-1}{h}\left[(\de_{s}(\de_{c}\de_{s})^{k-(h+1)}f)\cdot (\de_{c}(\de_{s}\de_{c})^{h+1}g)+(\de_{c}(\de_{s}\de_{c})^{h+1}f)\cdot (\de_{s}(\de_{c}\de_{s})^{k-(h+1)}g)\right]\\
&+\sum_{h=1}^{k-1}\binom{k-1}{h}\left[(\de_{s}(\de_{c}\de_{s})^{k-h}f)\cdot (\de_{c}(\de_{s}\de_{c})^{h}g)+(\de_{c}(\de_{s}\de_{c})^{h}f)\cdot (\de_{s}(\de_{c}\de_{s})^{k-h}g)\right]\\
&+\sum_{h=0}^{k-1}\binom{k-1}{h}\left[((\de_{c}\de_{s})^{k-h}f)\cdot ((\de_{s}\de_{c})^{h+1}g)+((\de_{s}\de_{c})^{h+1}f)\cdot ((\de_{c}\de_{s})^{k-h}g)\right]\\
&+\sum_{h=1}^{k-1}\binom{k-1}{h}(\left[(\de_{c}\de_{s})^{k-h+1}f)\cdot ((\de_{s}\de_{c})^{h}g)+((\de_{s}\de_{c})^{h}f)\cdot ((\de_{c}\de_{s})^{k-h+1}g)\right]\Big\}.
\end{align*}
If we set $h=p-1$ in the two summations starting from $h=0$, we get
\begin{align*}
\left(\de_{c}\de_{s}\right)^{k+1}(f\cdot g)&=\left(\left(\de_{c}\de_{s}\right)^{k+1}f\right)\cdot v_{s}g+v_{s}f\cdot \left(\left(\de_{c}\de_{s}\right)^{k+1}g\right)\\
&+\frac{1}{2}\Big\{\left(\de_{s}\left(\de_{c}\de_{s}\right)^{k}f\right)\cdot \de_{c}g+\de_{c}f\cdot \left(\de_{s}\left(\de_{c}\de_{s}\right)^{k}g\right)\\
&+\sum_{p=1}^{k}\binom{k-1}{p-1}\left[(\de_{s}(\de_{c}\de_{s})^{k-p}f)\cdot (\de_{c}(\de_{s}\de_{c})^{p}g)+(\de_{c}(\de_{s}\de_{c})^{p}f)\cdot (\de_{s}(\de_{c}\de_{s})^{k-p}g)\right]\\
&+\sum_{h=1}^{k-1}\binom{k-1}{h}\left[(\de_{s}(\de_{c}\de_{s})^{k-h}f)\cdot (\de_{c}(\de_{s}\de_{c})^{h}g)+(\de_{c}(\de_{s}\de_{c})^{h}f)\cdot (\de_{s}(\de_{c}\de_{s})^{k-h}g)\right]\\
&+\sum_{p=1}^{k}\binom{k-1}{p-1}\left[((\de_{c}\de_{s})^{k-p+1}f)\cdot ((\de_{s}\de_{c})^{p}g)+((\de_{s}\de_{c})^{p}f)\cdot ((\de_{c}\de_{s})^{k-p+1}g)\right]\\
&+\sum_{h=1}^{k-1}\binom{k-1}{h}\left[((\de_{c}\de_{s})^{k-h+1}f)\cdot ((\de_{s}\de_{c})^{h}g)+((\de_{s}\de_{c})^{h}f)\cdot ((\de_{c}\de_{s})^{k-h+1}g)\right]\Big\}.
\end{align*}
Now let us substitute $p$ for $h$. We recall that, for all $p\in\{1,\ldots,k-1\}$, it holds
$$
\binom{k-1}{p-1}+\binom{k-1}{p}=\binom{k}{p},
$$
and $\binom{k-1}{k-1}=1=\binom{k}{k}$. Thus, we get the thesis
\begin{align*}
\left(\de_{c}\de_{s}\right)^{k+1}(f\cdot g)&=\left(\left(\de_{c}\de_{s}\right)^{k+1}f\right)\cdot v_{s}g+v_{s}f\cdot \left(\left(\de_{c}\de_{s}\right)^{k+1}g\right)\\
&+\frac{1}{2}\left\{\sum_{p=0}^{k}\binom{k}{p}\left[(\de_{s}(\de_{c}\de_{s})^{k-p}f)\cdot (\de_{c}(\de_{s}\de_{c})^{p}g)+(\de_{c}(\de_{s}\de_{c})^{p}f)\cdot (\de_{s}(\de_{c}\de_{s})^{k-p}g)\right]\right.\\
&\,\,\,\,\,\,\,\,\,\,\left.+\sum_{p=1}^{k}\binom{k}{p}\left[((\de_{c}\de_{s})^{(k+1)-p}f)\cdot ((\de_{s}\de_{c})^{p}g)+((\de_{s}\de_{c})^{p}f)\cdot ((\de_{c}\de_{s})^{(k+1)-p}g)\right]\right\}.
\end{align*}
\end{proof}

\begin{remark}\label{lowering}
Let $f=\mathcal{I}(F_{0}+\sqrt{-1}F_{1})$ be any slice function such that its components $F_{0}$ and $F_{1}$ are homogeneous polynomials of degree $k$ in $\alpha$ and $\beta$.
Then, clearly, $\de_{s}f$ and $\de_{c}f$ are slice functions such that their respective stem functions are homogeneous polynomials of degree $k-1$.
\end{remark}
Thanks to the previous remark, we can state the following Lemma.
\begin{lemma}\label{lemmapowers}
Let $n \in \mathbb{N}$. We have the following equalities.
\begin{itemize}
\item If $n>0$, then 
$$
\de_{s}(\de_{c}\de_{s})^{n}x^{2n}=\de_{c}(\de_{s}\de_{c})^{n}x^{2n}\equiv 0.
$$
\item If $n\geq 0$, then 
$$
(\de_{c}\de_{s})^{n+1}x^{2n+1}=(\de_{s}\de_{c})^{n+1}x^{2n+1}\equiv 0.
$$
\end{itemize}
\end{lemma}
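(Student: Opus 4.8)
The plan is a pure degree-counting argument resting on Remark~\ref{lowering}. First I would identify the stem function of the monomial $x^m$. Writing $x=\alpha+I\beta$ and expanding $(\alpha+I\beta)^m$ by the binomial theorem (legitimate since $\alpha$ is real and $I^2=-1$), the even powers of $I$ collect into a real polynomial $F_0(\alpha,\beta)$ and the odd powers into $I$ times a real polynomial $F_1(\alpha,\beta)$; concretely $x^m=\mathcal{I}(z^m)$ with $F_0=\Re(z^m)$ and $F_1=\Im(z^m)$. Both $F_0$ and $F_1$ are homogeneous of degree $m$ in $(\alpha,\beta)$, so $x^m$ is exactly of the type considered in Remark~\ref{lowering}, with $k=m$.

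Next I would iterate Remark~\ref{lowering}: each single application of $\de_s$ or of $\de_c$ sends a slice function with homogeneous stem components of degree $k$ to one with homogeneous stem components of degree $k-1$, and the output is again of the same homogeneous type, so the remark may be reapplied. Hence, starting from $x^m$ and applying any $m$ of these operators, in any order, produces a slice function whose stem components are homogeneous of degree $0$, that is, a constant $c\in\HH$ (the degree-$0$ component $F_1$ being forced to vanish by the parity of a stem function). Since a constant slice function is induced by a stem with vanishing $\sqrt{-1}$-part, both $\de_s c=0$ and $\de_c c=0$; equivalently, one further application of $\de_s$ or $\de_c$ annihilates it. In short, applying $m+1$ operators of the form $\de_s,\de_c$ to $x^m$ always yields the zero function.

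It then remains to check that in each of the four expressions the total number of operators is exactly $m+1$. For the first item, with $m=2n$, both $\de_s(\de_c\de_s)^n$ and $\de_c(\de_s\de_c)^n$ consist of $2n+1=m+1$ operators. For the second item, with $m=2n+1$, both $(\de_c\de_s)^{n+1}$ and $(\de_s\de_c)^{n+1}$ consist of $2n+2=m+1$ operators. In every case the operator count exceeds the degree of the monomial by exactly one, so the conclusion of the previous paragraph gives the claimed identical vanishing.

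The argument is essentially bookkeeping, so I do not expect a genuine obstacle. The only points that require a little care are the self-consistency of iterating Remark~\ref{lowering} (one must note that the output is again a homogeneous-polynomial slice function, so the remark applies repeatedly) and the behaviour at the degree-$0$ step, where the apparent singularity of $\de_s$ at real points is harmless because the relevant stem function is constant and is simply annihilated.
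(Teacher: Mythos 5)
Your proof is correct and follows essentially the same route as the paper: the paper's own argument is precisely that Remark~\ref{lowering} lowers the degree of the homogeneous stem components by one at each application of $\de_s$ or $\de_c$, so the $m$-th iteration applied to $x^m$ yields a constant and the $(m+1)$-th yields zero. Your version merely spells out the details (the stem function of $x^m$, the iterability of the remark, the operator count, and the parity of $F_1$ at degree zero) that the paper leaves implicit.
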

\begin{proof}
Thanks to Remark~\ref{lowering}, by applying to $x^{k}$ an alternate sequence of the operators $\de_{s}$ and $\de_{c}$, it turns out that the $k$-th iteration returns a constant
and hence the $(k+1)$-th returns zero.
\end{proof}

Before stating the following corollary we recall from Definition~\ref{characteristicpolynomial} that, for any $q_{0}\in\HH\setminus\R$ the characteristic
polynomial of $q_{0}$ is the quadratic quaternionic polynomial given by $\Delta_{q_{0}}(x)=(x-q_{0})\cdot(x-q_{0}^{c})$.

\begin{corollary}\label{cor1}
Let $q_{0}\in\HH\setminus\R$ be any quaternion and $x=\alpha+I\beta$. Then we have the following equalities.
\begin{enumerate}
\item 
$$
\de_{s}(x-q_{0})=\de_{c}(x-q_{0})\equiv1,\qquad v_{s}(x-q_{0})=\alpha-q_{0},\qquad (\de_{s}\de_{c})(x-q_{0})=(\de_{c}\de_{s})(x-q_{0})\equiv 0.
$$
\item 
$$\de_{s}\Delta_{q_{0}}=2(\alpha-\Re(q_{0})),\qquad\de_{c}\Delta_{q_{0}}=2(x-\Re(q_{0})),\qquad v_{s}\Delta_{q_{0}}=\alpha^{2}-\beta^{2}-2\alpha \Re(q_{0})+|q_{0}|^{2}.$$
Notice that $(\de_{s}\Delta_{q_{0}})(q_{0})=(v_{s}\Delta_{q_{0}})(q_{0})=0$. Moreover
$$
(\de_{c}\de_{s})\Delta_{q_{0}}\equiv 1,\qquad (\de_{s}\de_{c})\Delta_{q_{0}}\equiv 2.
$$
\item For any integer $k\geq 0$, we have 
$$
\de_{s}(\de_{c}\de_{s})^{k}\Delta_{q_{0}}^{k}=\de_{c}(\de_{s}\de_{c})^{k}\Delta_{q_{0}}^{k}\equiv 0,\qquad 
(\de_{c}\de_{s})^{k+1}(\Delta^{k}_{q_{0}}(x) (x-q_{0}))= (\de_{s}\de_{c})^{k+1}(\Delta^{k}_{q_{0}}(x) (x-q_{0}))\equiv 0.
$$
\end{enumerate}

\end{corollary}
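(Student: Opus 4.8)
The plan is to dispatch items (1) and (2) by explicit computation with stem functions, and to obtain the structural item (3) from the degree-lowering principle of Remark~\ref{lowering}, essentially reusing the counting argument behind Lemma~\ref{lemmapowers}. For (1), I would first record the stem of the degree-one polynomial: writing $x=\alpha+I\beta$ gives $x-q_{0}=\mathcal{I}(F)$ with $F_{0}=\alpha-q_{0}$ and $F_{1}=\beta$. Then Definition~\ref{sphericalderivativevalue} yields directly $\de_{s}(x-q_{0})=\mathcal{I}(F_{1}/\beta)=\mathcal{I}(1)\equiv1$ and $v_{s}(x-q_{0})=\mathcal{I}(F_{0})=\alpha-q_{0}$, while $\de_{c}(x-q_{0})\equiv1$ follows from $\de_{c}=\mathcal{I}\big(\tfrac12(\partial_{\alpha}-I\partial_{\beta})(F_{0}+IF_{1})\big)$, the identity used in the proof of Lemma~\ref{derivativesphvalue}. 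Since both first derivatives are constant, applying one further operator gives $(\de_{s}\de_{c})(x-q_{0})=(\de_{c}\de_{s})(x-q_{0})\equiv0$.

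For (2), I would expand $\Delta_{q_{0}}(\alpha+I\beta)$ with $(\alpha+I\beta)^{2}=\alpha^{2}-\beta^{2}+2\alpha\beta I$ to read off the stem components $F_{0}=\alpha^{2}-\beta^{2}-2\alpha\Re(q_{0})+|q_{0}|^{2}$ and $F_{1}=2\beta(\alpha-\Re(q_{0}))$. Then $v_{s}\Delta_{q_{0}}=\mathcal{I}(F_{0})$ and $\de_{s}\Delta_{q_{0}}=\mathcal{I}(F_{1}/\beta)=2(\alpha-\Re(q_{0}))$ are immediate, and $\de_{c}\Delta_{q_{0}}=2(x-\Re(q_{0}))$ follows either from the same formula or, more transparently, from the Leibniz rule~\eqref{leibniz} applied to $\Delta_{q_{0}}=(x-q_{0})\cdot(x-q_{0}^{c})$ together with item (1). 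The vanishing at $q_{0}$ is the substitution $\alpha=\Re(q_{0})$, $\beta=\beta_{0}$, and the two mixed values $(\de_{c}\de_{s})\Delta_{q_{0}}\equiv1$, $(\de_{s}\de_{c})\Delta_{q_{0}}\equiv2$ come from applying one more derivative to the first-degree functions $2(\alpha-\Re(q_{0}))$ and $2(x-\Re(q_{0}))$ just computed.

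The core is item (3), which I would settle by degree counting. The key point is that, $\Delta_{q_{0}}$ being slice preserving, its $k$-th power coincides with its $k$-th slice power $\mathcal{I}\big((F_{0}+\sqrt{-1}F_{1})^{k}\big)$, whose stem is a polynomial of degree $2k$ in $(\alpha,\beta)$; likewise $\Delta_{q_{0}}^{k}(x)(x-q_{0})$ is induced by a stem of degree $2k+1$. By Remark~\ref{lowering}, applied termwise to each homogeneous component, both $\de_{s}$ and $\de_{c}$ send a stem of degree $d$ to one of degree at most $d-1$ (and a constant to $0$); hence any alternating string of $m$ such operators annihilates every slice function whose stem has degree $<m$. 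Since $\de_{s}(\de_{c}\de_{s})^{k}$ and $\de_{c}(\de_{s}\de_{c})^{k}$ are strings of $2k+1>2k$ operators, and $(\de_{c}\de_{s})^{k+1}$, $(\de_{s}\de_{c})^{k+1}$ of $2k+2>2k+1$ operators, all four expressions vanish identically.

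The only delicate step — the nearest thing to an obstacle — is the justification that each operator strictly lowers the degree: for $\de_{s}$ this rests on $F_{1}$ being divisible by $\beta$ (a stem function has $F_{1}$ odd in $\beta$), so that $F_{1}/\beta$ is again polynomial of one lower degree, and for $\de_{c}$ it is clear from the $\tfrac12(\partial_{\alpha}-I\partial_{\beta})$ expression. This is precisely the content of Remark~\ref{lowering}, extended from homogeneous to arbitrary polynomials by linearity; the remainder is bookkeeping of operator counts against degrees.
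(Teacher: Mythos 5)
Your proposal is correct and follows essentially the same route as the paper: items (1) and (2) by direct computation on the stem functions, and item (3) by the degree-lowering count of Remark~\ref{lowering}, which is exactly the content of Lemma~\ref{lemmapowers} that the paper invokes. You merely make explicit what the paper leaves implicit, namely the extension of the monomial count to arbitrary polynomial stems by linearity and the divisibility of $F_{1}$ by $\beta$.
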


\begin{proof}
Points $(1)$ and $(2)$ are straightforward consequences of the definitions of $\Delta_{q_{0}}$ and of the differential operators $\de_{s}$ and $\de_{c}$. For point 
$(3)$, just notice that $\Delta^{k}_{q_{0}}$ is a quaternionic polynomial of degree $2k$, while $\Delta^{k}_{q_{0}}(x) (x-q_{0})$ is a quaternionic polynomial of degree $2k+1$.
Hence, by Lemma~\ref{lemmapowers}, we get the thesis.
\end{proof}

\section{Alternate actions of $\de_{s}$ and $\de_{c}$ on a characteristic polynomial}\label{alternate}
In this section we will give a series of technical results needed to prove our first main theorem. Notice that the order of the result contained in this section is not completely arbitrary. In fact as the reader might notice, all such results are somehow linked one to another
as showed in the flow chart given in Appendix~\ref{flow}.

We start from the following lemma.

\begin{lemma}\label{lemma2}
Let $q_{0}\in\HH$ and $p\in\mathbb{N}\setminus\{0\}$. Then, for any $0\leq k<p$ (and so for any $k\in\mathbb{\mathbb{N}}$) we have
$$
(\de_{s}(\de_{c}\de_{s})^{k}\Delta_{q_{0}}^{p})(q_{0})=0.
$$
\end{lemma}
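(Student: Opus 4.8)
The plan is to argue by induction on $p$, exploiting the factorization $\Delta_{q_{0}}^{p}=\Delta_{q_{0}}\cdot\Delta_{q_{0}}^{p-1}$ together with the product formula of Proposition~\ref{main}. Since every quantity occurring below is a (slice) polynomial, its value at $q_{0}$ is governed by the explicit formulas of Corollary~\ref{cor1}, so real and non-real $q_{0}$ need not be distinguished. The base case $p=1$ forces $k=0$ and is precisely the relation $(\de_{s}\Delta_{q_{0}})(q_{0})=0$ of Corollary~\ref{cor1}(2).

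For the inductive step I assume $(\de_{s}(\de_{c}\de_{s})^{j}\Delta_{q_{0}}^{p-1})(q_{0})=0$ for all $0\le j<p-1$, fix $k$ with $0\le k<p$, and expand $\de_{s}(\de_{c}\de_{s})^{k}(\Delta_{q_{0}}\cdot\Delta_{q_{0}}^{p-1})$ by applying $\de_{s}$ to the identity of Proposition~\ref{main} via the Leibniz rule~\eqref{leibniz} and the relations~\eqref{basicslice}; concretely this is the intermediate expansion that appears midway through the proof of Proposition~\ref{main}, read with $f=\Delta_{q_{0}}$ and $g=\Delta_{q_{0}}^{p-1}$. Two structural observations tame every resulting summand. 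First, $\Delta_{q_{0}}$ is slice preserving, hence so are $v_{s}\Delta_{q_{0}}$ and all the factors $\de_{s}(\de_{c}\de_{s})^{m}\Delta_{q_{0}}$, $(\de_{s}\de_{c})^{m}\Delta_{q_{0}}$ built from it; by the remark following the definition of slice preserving functions, each summand is then a slice product whose $\Delta_{q_{0}}$-factor is slice preserving, so it coincides with the pointwise product and its evaluation at $q_{0}$ factorizes. Second, the stem components of $\Delta_{q_{0}}$ are homogeneous of degree $2$, so by Remark~\ref{lowering} any alternating string of more than two of the operators $\de_{s},\de_{c}$ annihilates $\Delta_{q_{0}}$, a string of exactly two yields a constant (with $(\de_{s}\de_{c})\Delta_{q_{0}}\equiv2$ by Corollary~\ref{cor1}(2)), and a single operator yields a degree-one polynomial.

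With these reductions the verification reduces to inspecting the $\Delta_{q_{0}}$-factor of each summand. In every summand this factor is $v_{s}\Delta_{q_{0}}$, or $\de_{s}(\de_{c}\de_{s})^{m}\Delta_{q_{0}}$, or $(\de_{s}\de_{c})^{m}\Delta_{q_{0}}$. All factors built from three or more operators vanish identically, so the only survivors are: $v_{s}\Delta_{q_{0}}$ and $\de_{s}\Delta_{q_{0}}=\de_{s}(\de_{c}\de_{s})^{0}\Delta_{q_{0}}$, both of which vanish at $q_{0}$ by Corollary~\ref{cor1}(2); and the constant $(\de_{s}\de_{c})\Delta_{q_{0}}\equiv2$. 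A summand of this last kind is a constant multiple of $\de_{s}(\de_{c}\de_{s})^{k-1}\Delta_{q_{0}}^{p-1}$, whose value at $q_{0}$ is zero by the inductive hypothesis, since $k<p$ gives $k-1<p-1$. Thus every summand vanishes at $q_{0}$, completing the induction.

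The difficulty here is bookkeeping rather than conceptual: one must check, summand by summand, which alternating string of $\de_{s},\de_{c}$ lands on the low-degree factor $\Delta_{q_{0}}$ and confirm that it produces either an identically vanishing factor, a factor ($\de_{s}\Delta_{q_{0}}$ or $v_{s}\Delta_{q_{0}}$) vanishing at $q_{0}$, or the constant $2$ paired with $\de_{s}(\de_{c}\de_{s})^{k-1}\Delta_{q_{0}}^{p-1}$. The strict inequality $k<p$ enters exactly once, to guarantee $k-1<p-1$ so that the inductive hypothesis applies; this is the hinge of the argument and clarifies why, in this range, the sharp statement is vanishing at $q_{0}$ rather than the identity $\equiv 0$ available when the exponent matches the number of iterations.
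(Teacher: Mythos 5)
Your proof is correct and follows essentially the same route as the paper's: the same factorization $\Delta_{q_{0}}^{p}=\Delta_{q_{0}}^{p-1}\cdot\Delta_{q_{0}}$, the same expansion via Proposition~\ref{main}, and the same observation that every surviving summand either carries a factor $v_{s}\Delta_{q_{0}}$ or $\de_{s}\Delta_{q_{0}}$ vanishing at $q_{0}$, or reduces to a constant multiple of $\de_{s}(\de_{c}\de_{s})^{k-1}\Delta_{q_{0}}^{p-1}$. The only (cosmetic) difference is that you package the reduction as an induction on $p$, whereas the paper unrolls the same recursion $k$ times to land on the base case $\de_{s}\Delta_{q_{0}}^{p-k}$ at $q_{0}$; the underlying computation is identical.
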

\begin{proof}
First of all, for $k=0$, the thesis follows from Corollary~\ref{cor1}.
Assume now that $k>0$ and $p>k$. By Proposition~\ref{main} and Corollary~\ref{cor1}, we have
\begin{align*}
((\de_{s}(\de_{c}\de_{s})^{k})\Delta_{q_{0}}^{p})&=((\de_{s}(\de_{c}\de_{s})^{k})(\Delta_{q_{0}}^{p-1}\Delta_{q_{0}}))\\
&=\de_{s}\bigg\{((\de_{c}\de_{s})^{k})\Delta_{q_{0}}^{p-1})\cdot v_{s}\Delta_{q_{0}}+v_{s}\Delta_{q_{0}}^{p-1}\cdot ((\de_{c}\de_{s})^{k}\Delta_{q_{0}})\\
&\quad\quad+\frac{1}{2}\left[\binom{k-1}{0}(\de_{s}(\de_{c}\de_{s})^{k-1}\Delta_{q_{0}}^{p-1})\cdot \de_{c}\Delta_{q_{0}}+\binom{k-1}{k-1}(\de_{c}(\de_{s}\de_{c})^{k-1}\Delta_{q_{0}}^{p-1})\cdot \de_{s}\Delta_{q_{0}}\right.\\
&\quad\quad\,\,\,\quad\left.\left.
+\binom{k-1}{1}((\de_{c}\de_{s})^{k-1}\Delta_{q_{0}}^{p-1})\cdot 2+\binom{k-1}{k-1}((\de_{s}\de_{c})^{k-1}\Delta_{q_{0}}^{p-1})\right]\right\}\\
&=(\de_{s}(\de_{c}\de_{s})^{k})\Delta_{q_{0}}^{p-1})\cdot v_{s}\Delta_{q_{0}}+v_{s}\Delta_{q_{0}}^{p-1}\cdot (\de_{s}(\de_{c}\de_{s})^{k}\Delta_{q_{0}})\\
&\quad\quad+\frac{1}{2}\left[(\de_{s}(\de_{c}\de_{s})^{k-1}\Delta_{q_{0}}^{p-1})\cdot (\de_{s}\de_{c})\Delta_{q_{0}}+((\de_{s}\de_{c})^{k}\Delta_{q_{0}}^{p-1})\cdot \de_{s}\Delta_{q_{0}}\right.\\
&\quad\quad\left.
+(k-1)(\de_{s}(\de_{c}\de_{s})^{k-1}\Delta_{q_{0}}^{p-1})\cdot 2\right].
\end{align*}
Now, by using again the fact that $(\de_{s}\de_{c})\Delta_{q_{0}}=2$ and that, for any $m\in\mathbb{N}$, $(v_{s}\Delta_{q_{0}}^{m})(q_{0})=(\de_{s}\Delta_{q_{0}}^{m})(q_{0})=0$, for $x=q_{0}$, we obtain
$$
((\de_{s}(\de_{c}\de_{s})^{k})\Delta_{q_{0}}^{p})(q_{0})=k(\de_{s}(\de_{c}\de_{s})^{k-1}\Delta_{q_{0}}^{p-1})(q_{0}).
$$
Iterating the previous computations and argument $k$ times, we get
$$
((\de_{s}(\de_{c}\de_{s})^{k})\Delta_{q_{0}}^{p})(q_{0})=k\cdot(k-1)\cdot\ldots\cdot2\cdot1(\de_{s}\Delta_{q_{0}}^{p-k})(q_{0})
$$
but, as $p>k$, by the previous case $k=0$, we have $((\de_{s}(\de_{c}\de_{s})^{k})\Delta_{q_{0}}^{p})(q_{0})=0$. 
\end{proof}

We now evaluate at $q_{0}$ the $k$-th iteration of the operator $(\de_{s}\de_{c})$ applied to the $p$-th power of the characteristic polynomial $\Delta_{q_{0}}$.

\begin{lemma}\label{lemma5}
Let $q_{0}\in\HH$ and $p\in\mathbb{N}\setminus\{0\}$. Then, for any $0\leq k\leq p$ we have
$$
((\de_{s}\de_{c})^{k}\Delta_{q_{0}}^{p})(q_{0})=
\begin{cases}
2\cdot k!,\qquad\mbox{ if }k=p,\\
0,\qquad{ otherwise.}
\end{cases}
$$
\end{lemma}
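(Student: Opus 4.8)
The plan is to derive a single recursion that lowers at the same time the number of operator pairs $k$ and the power $p$, and then to iterate it down to an explicit base case. Write $\Delta:=\Delta_{q_{0}}$ and recall from Corollary~\ref{cor1} that $\Delta$ is slice preserving with $\de_{c}\Delta=2(x-\Re(q_{0}))$, $(\de_{c}\de_{s})\Delta\equiv 1$ and $(\de_{s}\de_{c})\Delta\equiv 2$; recall also that $(v_{s}\Delta^{m})(q_{0})=(\de_{s}\Delta^{m})(q_{0})=0$ for every $m\geq 1$, since $\Delta$ and hence each of its powers vanishes on the sphere $\SF_{q_{0}}$.

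First I would dispose of the base cases. For $k=0$ the left-hand side is $\Delta^{p}(q_{0})=0$ (and here $k<p$), as claimed. For $k=1$, writing $(\de_{s}\de_{c})\Delta^{p}=\de_{s}(\de_{c}\Delta^{p})$ with $\de_{c}\Delta^{p}=2p\,(x-\Re(q_{0}))\cdot\Delta^{p-1}$ and applying the Leibniz rule~\eqref{leibniz}, the summand carrying $\de_{s}\Delta^{p-1}$ comes multiplied by $\alpha-\Re(q_{0})$, which vanishes at $q_{0}$; hence $((\de_{s}\de_{c})\Delta^{p})(q_{0})=2p\,(v_{s}\Delta^{p-1})(q_{0})$, equal to $0$ for $p\geq 2$ and to $2=2\cdot 1!$ for $p=1$.

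For $k\geq 2$ I would use the factorisation $(\de_{s}\de_{c})^{k}=\de_{s}\,(\de_{c}\de_{s})^{k-1}\,\de_{c}$ and feed $f:=2p\,(x-\Re(q_{0}))$ and $g:=\Delta^{p-1}$ (so that $f\cdot g=\de_{c}\Delta^{p}$) into Proposition~\ref{main}. The crucial simplification is that $f$ has stem functions of degree $1$, so by Remark~\ref{lowering} every summand of Proposition~\ref{main} in which $f$ is hit by two or more operators dies: the entire summation over the terms $((\de_{c}\de_{s})^{\cdot}f)\cdot((\de_{s}\de_{c})^{\cdot}g)$ disappears, $(\de_{c}\de_{s})^{k-1}f=0$, and, using $\de_{s}f=\de_{c}f=2p$, only three terms survive:
\[
(\de_{c}\de_{s})^{k-1}(f\cdot g)=v_{s}f\cdot(\de_{c}\de_{s})^{k-1}g+p\,\de_{c}(\de_{s}\de_{c})^{k-2}g+p\,\de_{s}(\de_{c}\de_{s})^{k-2}g.
\]
Applying the outer $\de_{s}$, the last summand vanishes because $\de_{s}^{2}\equiv 0$ by~\eqref{basicslice}; the first summand, after Leibniz, survives only through the factor $v_{s}f=2p(\alpha-\Re(q_{0}))$ (the other piece dies since $\de_{s}v_{s}f=0$), and this factor vanishes at $q_{0}$; the middle summand becomes $p\,\de_{s}\de_{c}(\de_{s}\de_{c})^{k-2}g=p\,(\de_{s}\de_{c})^{k-1}\Delta^{p-1}$. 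Evaluating at $q_{0}$ I obtain the recursion
\[
((\de_{s}\de_{c})^{k}\Delta^{p})(q_{0})=p\,((\de_{s}\de_{c})^{k-1}\Delta^{p-1})(q_{0}),\qquad k\geq 2.
\]

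Finally I would iterate. Since $k\leq p$, unrolling the recursion $k-1$ times keeps the power positive and lands on the base case:
\[
((\de_{s}\de_{c})^{k}\Delta^{p})(q_{0})=p(p-1)\cdots(p-k+2)\,((\de_{s}\de_{c})\Delta^{p-k+1})(q_{0}).
\]
If $k<p$ then $p-k+1\geq 2$ and the last factor is $0$; if $k=p$ the last factor is $((\de_{s}\de_{c})\Delta)(q_{0})=2$ and the coefficient product is $p(p-1)\cdots 2=p!$, giving the value $2\cdot p!=2\cdot k!$. The main obstacle is the bookkeeping in the step for $k\geq 2$: one must check \emph{precisely} which of the many summands of Proposition~\ref{main} survive first the degree-$1$ collapse of $f$ and then the action of $\de_{s}$ and the evaluation at $q_{0}$. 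Once this is organised, the recursion, and hence the telescoping, is immediate.
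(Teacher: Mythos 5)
Your proof is correct and follows essentially the same route as the paper: both apply Proposition~\ref{main} to the factorisation $\de_{c}\Delta_{q_{0}}^{p}=2p(x-\Re(q_{0}))\Delta_{q_{0}}^{p-1}$ and extract the recursion $((\de_{s}\de_{c})^{k}\Delta_{q_{0}}^{p})(q_{0})=p\,((\de_{s}\de_{c})^{k-1}\Delta_{q_{0}}^{p-1})(q_{0})$. The only (harmless) difference is organisational: you observe that this recursion holds uniformly for all $k\ge 2$ and telescope both cases $k<p$ and $k=p$ down to the $k=1$ base, whereas the paper runs the iteration only for $k<p$ and treats $k=p$ by a separate induction.
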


\begin{proof}
If $k=0$ the result is obvious, since $\Delta_{q_{0}}^{p}(q_{0})=0$. If $k=1<p$ we have 
$$
((\de_{s}\de_{c})\Delta_{q_{0}}^{p})(x)=\de_{s}(2p\Delta_{q_{0}}^{p-1}(x)(x-\Re(q_{0})))=2p[(\de_{s}\Delta_{q_{0}}^{p-1})\cdot (v_{s}(x-\Re(q_{0})))+(v_{s}\Delta_{q_{0}}^{p-1})\cdot (\de_{s}(x-\Re(q_{0})))],
$$
but again, the previous expression vanishes when evaluated at $x=q_{0}$ as $(\de_{s}\Delta_{q_{0}}^{p-1})(q_{0})=(v_{s}\Delta_{q_{0}}^{p-1})(q_{0})=0$.
Assume now that $k>1$ and that $p>k$. By Proposition~\ref{main} and Corollary~\ref{cor1}, we have
\begin{align*}
(\de_{s}\de_{c})^{k}\Delta_{q_{0}}^{p}&=\de_{s}((\de_{c}\de_{s})^{k-1}(2p\Delta_{q_{0}}^{p-1}(x)(x-\Re(q_{0}))))=2p\de_{s}\bigg\{((\de_{c}\de_{s})^{k-1}\Delta_{q_{0}}^{p-1}(x))\cdot v_{s}(x-\Re(q_{0}))\\
&\quad\left.+\frac{1}{2}\left[\binom{k-2}{0}\de_{s}(\de_{c}\de_{s})^{k-2}\Delta_{q_{0}}^{p-1}(x)+\binom{k-2}{k-2}\de_{c}(\de_{s}\de_{c})^{k-2}\Delta_{q_{0}}^{p-1}(x)\right]\right\}\\
&=2p\left\{(\de_{s}(\de_{c}\de_{s})^{k-1}\Delta_{q_{0}}^{p-1}(x))\cdot v_{s}(x-\Re(q_{0}))+\frac{1}{2}(\de_{s}\de_{c})^{k-1}\Delta_{q_{0}}^{p-1}(x)\right\},
\end{align*}
but since $(v_{s}(x-\Re(q_{0})))_{|_{x=q_{0}}}=0$, then we have
$$
((\de_{s}\de_{c})^{k}\Delta_{q_{0}}^{p})(q_{0})=p((\de_{s}\de_{c})^{k-1}\Delta_{q_{0}}^{p-1})(q_{0}).
$$
Iterating the same computations and argument $k$ times, we get
$$
((\de_{s}\de_{c})^{k}\Delta_{q_{0}}^{p})(q_{0})=p\cdot(p-1)\cdot\ldots\cdot2\cdot1((\de_{s}\de_{c})\Delta_{q_{0}}^{p-k})(q_{0}),
$$
and, thanks to the previous computation, we get the thesis.
If instead $p=k$, we proceed by induction.

The base case, for $k=1$ is done in Corollary~\ref{cor1}, point $(2)$.
Assume now that the result is true for $k-1\in\mathbb{N}$ and set $x=\alpha+I\beta$. Then,
\begin{align*}
((\de_{s}\de_{c})^{k}\Delta_{q_{0}}^{k})(x)&=((\de_{s}(\de_{c}\de_{s})^{k-1}\de_{c})\Delta_{q_{0}}^{k})(x)=(\de_{s}(\de_{c}\de_{s})^{k-1})(k\Delta_{q_{0}}^{k-1}(x)((x-q_{0})+(x-\bar q_{0})))\\
&=2k(\de_{s}(\de_{c}\de_{s})^{k-1})(\Delta_{q_{0}}^{k-1}(x)(x-\Re(q_{0}))).
\end{align*}
We now apply the formula contained in Proposition~\ref{main}, and, recalling the equalities in Lemma~\ref{lemmapowers} and Corollary~\ref{cor1}, obtain
\begin{align*}
((\de_{s}\de_{c})^{k}\Delta_{q_{0}}^{k})(x)&=2k(\de_{s}(\de_{c}\de_{s})^{k-1})(\Delta_{q_{0}}^{k-1}(x)(x-\Re(q_{0})))=2k\de_{s}\bigg\{(\de_{c}\de_{s})^{k-1}(\Delta_{q_{0}}^{k-1}(x)(\alpha-\Re(q_{0})))\\
&\left.\quad+\frac{1}{2}\left[\binom{k-2}{0}(\de_{s}(\de_{c}\de_{s})^{k-2})\Delta_{q_{0}}^{k-1}(x)+
\binom{k-2}{k-2}(\de_{c}(\de_{s}\de_{c})^{k-2})\Delta_{q_{0}}^{k-1}(x)\right]\right\}\\
&=2k\left\{\frac{1}{2}(\de_{s}\de_{c})^{k-1}\Delta_{q_{0}}^{k-1}(x)\right\}=k\cdot 2\cdot(k-1)!=2\cdot k!,
\end{align*}
where in the last line of the previous sequence of computations, we have used the inductive hypothesis.
\end{proof}

%

Thanks to the previous results we are now able to prove our first main result.

\begin{theorem}\label{lemma6}
Let $q_{0}\in\HH$ and $p\in\mathbb{N}\setminus\{0\}$. Then, for any $0\leq k\leq p$ we have
$$
((\de_{c}\de_{s})^{k}\Delta_{q_{0}}^{p})(q_{0})=
\begin{cases}
k!,\qquad\mbox{ if }k=p,\\
0\qquad\mbox{otherwise.}
\end{cases}
$$
\end{theorem}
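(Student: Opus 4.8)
The plan is to prove the statement by induction on $k$, following the same strategy as the proof of Lemma~\ref{lemma5} but tracking the operator $\de_{c}\de_{s}$ instead of $\de_{s}\de_{c}$. The engine of the argument is the factorization $\Delta_{q_{0}}^{p}=\Delta_{q_{0}}^{p-1}\cdot\Delta_{q_{0}}$ combined with the product formula of Proposition~\ref{main}. A preliminary observation I would stress at the outset is that $\Delta_{q_{0}}$ is slice preserving, hence so are all its powers and all functions obtained from them by applying $\de_{s}$, $\de_{c}$ and $v_{s}$, since these operators act on the real stem components and preserve real-valuedness. Consequently every slice product appearing below has at least one slice-preserving factor, so by the Remark following the definition of slice-preserving functions it coincides with the ordinary pointwise product. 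This is the crucial point: it lets me evaluate each term at $q_{0}$ factor by factor, so that a product vanishes at $q_{0}$ as soon as one of its factors does.

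First I would dispose of the base cases. For $k=0$ the claim is $\Delta_{q_{0}}^{p}(q_{0})=0$, immediate from $\Delta_{q_{0}}(q_{0})=0$. For $k=1$ I would expand $(\de_{c}\de_{s})(\Delta_{q_{0}}^{p-1}\cdot\Delta_{q_{0}})$ by Proposition~\ref{main} (whose second summation is then empty) and use Corollary~\ref{cor1} together with the equalities $(v_{s}\Delta_{q_{0}}^{m})(q_{0})=(\de_{s}\Delta_{q_{0}}^{m})(q_{0})=0$ for $m\geq1$ and $(v_{s}\Delta_{q_{0}})(q_{0})=(\de_{s}\Delta_{q_{0}})(q_{0})=0$; this annihilates every term when $p\geq2$, giving $0$, while for $p=1$ Corollary~\ref{cor1} gives $(\de_{c}\de_{s})\Delta_{q_{0}}(q_{0})=1=1!$.

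For the inductive step ($k\geq2$, $2\leq k\leq p$, whence $p\geq2$) I would expand $(\de_{c}\de_{s})^{k}(\Delta_{q_{0}}^{p-1}\cdot\Delta_{q_{0}})$ via Proposition~\ref{main} and then prune using Corollary~\ref{cor1}: the only surviving derivatives of $g=\Delta_{q_{0}}$ are $(\de_{c}\de_{s})\Delta_{q_{0}}\equiv1$, $(\de_{s}\de_{c})\Delta_{q_{0}}\equiv2$, $\de_{c}\Delta_{q_{0}}=2(x-\Re(q_{0}))$ and $\de_{s}\Delta_{q_{0}}=2(\alpha-\Re(q_{0}))$, all higher iterates with exponent $\geq2$ vanishing identically. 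In the first summation the two terms not killed this way carry a factor $\de_{s}(\de_{c}\de_{s})^{k-1}\Delta_{q_{0}}^{p-1}$ or $\de_{s}\Delta_{q_{0}}$, both vanishing at $q_{0}$ (the former by Lemma~\ref{lemma2}, the latter by Corollary~\ref{cor1}), so that summation contributes nothing. In the second summation only $h=1$ and $h=k-1$ survive, and reading off $\binom{k-1}{1}=k-1$ and $\binom{k-1}{k-1}=1$ yields the recursion
\[
((\de_{c}\de_{s})^{k}\Delta_{q_{0}}^{p})(q_{0})=(k-1)\,((\de_{c}\de_{s})^{k-1}\Delta_{q_{0}}^{p-1})(q_{0})+\tfrac12\,((\de_{s}\de_{c})^{k-1}\Delta_{q_{0}}^{p-1})(q_{0}).
\]

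Finally I would insert Lemma~\ref{lemma5} for the last summand and the inductive hypothesis for the first. If $k<p$ then $k-1<p-1$, both summands vanish, and the left-hand side is $0$; if $k=p$ then both inputs are of the diagonal type, giving $(k-1)\,(k-1)!+\tfrac12\cdot2\,(k-1)!=k\,(k-1)!=k!$, which closes the induction. The main obstacle I anticipate is purely organizational: correctly isolating, among the many terms produced by Proposition~\ref{main}, the exactly two that survive evaluation at $q_{0}$, and tracking their binomial weights so that the coefficients $(k-1)$ and $\tfrac12$ come out right. The slice-preserving observation is precisely what makes this term-by-term vanishing legitimate.
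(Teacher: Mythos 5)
Your proof is correct and follows essentially the same route as the paper: the same factorization $\Delta_{q_{0}}^{p}=\Delta_{q_{0}}^{p-1}\cdot\Delta_{q_{0}}$, Proposition~\ref{main}, Corollary~\ref{cor1} and Lemmas~\ref{lemma2} and~\ref{lemma5}, leading to exactly the recursion you display. The only difference is organizational: the paper splits into the cases $k<p$ (iterating the recursion down to the $k=1$ base case) and $k=p$ (a separate induction), whereas you run a single induction on $k$ covering both at once, and your explicit remark that all factors are slice preserving---so the slice products are pointwise and may be evaluated factor by factor at $q_{0}$---makes precise a step the paper leaves implicit.
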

\begin{proof}
If $k=0$ the result is obvious, since $(\Delta_{q_{0}}^{p})(q_{0})=0$. If $k=1<p$ we have 
\begin{align*}
(\de_{c}\de_{s})\Delta_{q_{0}}^{p}&=\de_{c}[(\de_{s}\Delta_{q_{0}}^{p-1})\cdot (v_{s}\Delta_{q_{0}})+(v_{s}\Delta_{q_{0}}^{p-1})\cdot (\de_{s}\Delta_{q_{0}})]\\
&=((\de_{c}\de_{s})\Delta_{q_{0}}^{p-1})\cdot (v_{s}\Delta_{q_{0}})+(v_{s}\Delta_{q_{0}}^{p-1})\cdot (\de_{c}\de_{s})\Delta_{q_{0}}+\frac{1}{2}\left[(\de_{s}\Delta_{q_{0}}^{p-1})\cdot (\de_{c}\Delta_{q_{0}})+(\de_{c}\Delta_{q_{0}}^{p-1})\cdot (\de_{s}\Delta_{q_{0}})
\right],
\end{align*}
but again, the previous expression vanishes when evaluated at $x=q_{0}$ as $(\de_{s}\Delta_{q_{0}}^{p-1})(q_{0})=(v_{s}\Delta_{q_{0}}^{p-1})(q_{0})=0$.
Assume now that $k>1$ and that $p>k$. By Proposition~\ref{main}, we have
\begin{align*}
(\de_{c}\de_{s})^{k}\Delta_{q_{0}}^{p}&=(\de_{c}\de_{s})^{k}(\Delta_{q_{0}}^{p-1}\Delta_{q_{0}})\\
&=((\de_{c}\de_{s})^{k}\Delta_{q_{0}}^{p-1})\cdot v_{s}\Delta_{q_{0}}+(v_{s}\Delta_{q_{0}}^{p-1})\cdot ((\de_{c}\de_{s})^{k}\Delta_{q_{0}})\\
&+\frac{1}{2}\left\{\binom{k-1}{0}(\de_{s}(\de_{c}\de_{s})^{k-1}\Delta_{q_{0}}^{p-1})\cdot \de_{c}\Delta_{q_{0}}+\binom{k-1}{k-1}(\de_{c}(\de_{s}\de_{c})^{k-1}\Delta_{q_{0}}^{p-1})\cdot \de_{s}\Delta_{q_{0}}\right.\\
&\left.\quad\quad+\binom{k-1}{1}((\de_{c}\de_{s})^{k-1}\Delta_{q_{0}}^{p-1})\cdot 2+\binom{k-1}{k-1}((\de_{s}\de_{c})^{k-1}\Delta_{q_{0}}^{p-1})\right\}.
\end{align*}
Therefore, for $x=q_{0}$, using Lemma~\ref{lemma2}, Lemma~\ref{lemma5} and the fact that $(\de_{s}\Delta_{q_{0}}^{m})(q_{0})=(v_{s}\Delta_{q_{0}}^{m})(q_{0})=0$, we get
$$
((\de_{c}\de_{s})^{k}\Delta_{q_{0}}^{p})(q_{0})=(k-1)((\de_{c}\de_{s})^{k-1}\Delta_{q_{0}}^{p-1})(q_{0}).
$$
Iterating the same computation and argument $k-1$ times, we obtain
$$
((\de_{c}\de_{s})^{k}\Delta_{q_{0}}^{p})(q_{0})=(k-1)\cdot(k-2)\cdot\ldots\cdot2\cdot1((\de_{c}\de_{s})\Delta_{q_{0}}^{p-(k-1)})(q_{0}).
$$
But as $p>k$, thanks to the previous case $k=1$, we get the thesis.
If instead $k=p$, then we proceed by induction. The base case, for $k=1$ is addressed in Corollary~\ref{cor1}, point $(2)$.
Assume now that $k$ is greater or equal than $2$ and that the result is true for $k-1\in\mathbb{N}$ and set $x=\alpha+I\beta$. Thanks to Proposition~\ref{main}, and the fact that $\Delta_{q_{0}}$ has degree $2$, we have,
\begin{align*}
(\de_{c}\de_{s})^{k}\Delta_{q_{0}}^{k}&=(\de_{c}\de_{s})^{k}(\Delta_{q_{0}}^{k-1}\Delta_{q_{0}})\\
&=(\de_{c}\de_{s})^{k}\Delta_{q_{0}}^{k-1}v_{s}\Delta_{q_{0}}+v_{s}\Delta_{q_{0}}^{k-1}(\de_{c}\de_{s})^{k}\Delta_{q_{0}}\\
&\quad+\frac{1}{2}\left\{\binom{k-1}{0}(\de_{s}(\de_{c}\de_{s})^{k-1}\Delta_{q_{0}}^{k-1})\de_{c}\Delta_{q_{0}}+\binom{k-1}{k-1}(\de_{c}(\de_{s}\de_{c})^{k-1}\Delta_{q_{0}}^{k-1})\de_{s}\Delta_{q_{0}}\right.\\
&\left.\quad\quad\quad+\binom{k-1}{1}((\de_{c}\de_{s})^{k-1}\Delta_{q_{0}}^{k-1})((\de_{s}\de_{c})\Delta_{q_{0}})+\binom{k-1}{k-1}((\de_{s}\de_{c})^{k-1}\Delta_{q_{0}}^{k-1})((\de_{c}\de_{s})\Delta_{q_{0}})\right\}.
\end{align*}
Now, thanks to Corollary~\ref{cor1} we have
$$
(\de_{c}\de_{s})^{k}\Delta_{q_{0}}^{k}=\frac{1}{2}\left\{(k-1)((\de_{c}\de_{s})^{k-1}\Delta_{q_{0}}^{k-1})\cdot 2+((\de_{s}\de_{c})^{k-1}\Delta_{q_{0}}^{k-1})\right\}.
$$
Using the inductive hypothesis and Lemma~\ref{lemma5}, we finally have
$$
(\de_{c}\de_{s})^{k}\Delta_{q_{0}}^{k}=\frac{1}{2}\left\{(k-1)(k-1)!\cdot 2+2(k-1)!\right\}=(k-1)![(k-1)+1]=k!,
$$
proving the thesis.
\end{proof}

We now consider a slight modification of Lemma~\ref{lemma5}.
\begin{lemma}\label{lemma3}
Let $q_{0}\in\HH$ and $p\in\mathbb{N}\setminus\{0\}$. Then, for any $0\leq k\leq p-1$ we have
$$
((\de_{c}(\de_{s}\de_c)^{k})\Delta_{q_{0}}^{p})(q_{0})=\begin{cases}
2\Im(q_{0})k!,\qquad\mbox{ if }k=p-1,\\
0,\,\,\,\quad\qquad\qquad\mbox{ otherwise.}
\end{cases}
$$
\end{lemma}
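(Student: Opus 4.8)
The plan is to mirror the strategy used for Lemma~\ref{lemma5}, reducing everything to a one-step recursion that lowers both indices simultaneously. First I would rewrite the operator as $\de_{c}(\de_{s}\de_{c})^{k}=(\de_{c}\de_{s})^{k}\de_{c}$ and push the innermost $\de_{c}$ through the power, using the Leibniz rule in Formula~\eqref{leibniz} together with Corollary~\ref{cor1}, to get
$$\de_{c}\Delta_{q_{0}}^{p}=p\,\Delta_{q_{0}}^{p-1}\cdot\de_{c}\Delta_{q_{0}}=2p\,\Delta_{q_{0}}^{p-1}\cdot(x-\Re(q_{0})).$$
Thus $\de_{c}(\de_{s}\de_{c})^{k}\Delta_{q_{0}}^{p}=2p\,(\de_{c}\de_{s})^{k}\big(\Delta_{q_{0}}^{p-1}\cdot(x-\Re(q_{0}))\big)$, so the problem becomes the evaluation at $q_{0}$ of $(\de_{c}\de_{s})^{k}$ applied to the slice product of $f=\Delta_{q_{0}}^{p-1}$ with the auxiliary factor $g=x-\Re(q_{0})$.

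Next I would feed this product into Proposition~\ref{main}. The key is that $g=x-\Re(q_{0})=(x-q_{0})+\Im(q_{0})$ is extremely degenerate for these operators: by Corollary~\ref{cor1}, and since the constant $\Im(q_{0})$ is annihilated by both $\de_{s}$ and $\de_{c}$, one has $\de_{s}g=\de_{c}g\equiv 1$, every higher alternating derivative $(\de_{c}\de_{s})^{j}g$ and $(\de_{s}\de_{c})^{j}g$ vanishes for $j\geq 1$, and $v_{s}g=\alpha-\Re(q_{0})$ vanishes at $x=q_{0}$. Substituting these into the formula of Proposition~\ref{main} and evaluating at $q_{0}$, all terms carrying a factor $v_{s}g$ or a higher derivative of $g$ drop out, and only the two boundary summands of the first sum survive (the indices $h=0$ and $h=k-1$, each with binomial weight $1$). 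This leaves, for $k\geq 1$,
$$\big((\de_{c}\de_{s})^{k}(f\cdot g)\big)(q_{0})=\frac{1}{2}\Big[(\de_{s}(\de_{c}\de_{s})^{k-1}\Delta_{q_{0}}^{p-1})(q_{0})+(\de_{c}(\de_{s}\de_{c})^{k-1}\Delta_{q_{0}}^{p-1})(q_{0})\Big].$$
By Lemma~\ref{lemma2} the first term vanishes (since $k-1<p-1$ throughout the relevant range), so writing $B_{k}^{p}:=(\de_{c}(\de_{s}\de_{c})^{k}\Delta_{q_{0}}^{p})(q_{0})$ we obtain the clean recursion $B_{k}^{p}=p\,B_{k-1}^{p-1}$ for $1\leq k\leq p-1$.

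Finally I would record the base cases and iterate. Directly, $B_{0}^{m}=(\de_{c}\Delta_{q_{0}}^{m})(q_{0})=2m\,\Delta_{q_{0}}^{m-1}(q_{0})\,(q_{0}-\Re(q_{0}))$, which is $0$ for $m\geq 2$ because $\Delta_{q_{0}}(q_{0})=0$, and equals $2\Im(q_{0})$ for $m=1$. Unwinding $B_{k}^{p}=p(p-1)\cdots(p-k+1)\,B_{0}^{p-k}$ gives $0$ whenever $p-k\geq 2$, i.e.\ for $k<p-1$, while in the diagonal case $k=p-1$ it collapses onto $B_{0}^{1}=2\Im(q_{0})$, yielding the announced nonvanishing value as a factorial multiple of $2\Im(q_{0})$. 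The main obstacle is entirely bookkeeping: one must verify carefully, exactly as in Proposition~\ref{main} and Lemma~\ref{lemma5}, which of the many terms of the general product formula survive upon evaluation at $q_{0}$; once the degeneracy of $g=x-\Re(q_{0})$ is exploited and Lemma~\ref{lemma2} is applied, the recursion and the dichotomy are immediate. I would, however, double-check the precise diagonal constant, since iterating $B_{k}^{p}=p\,B_{k-1}^{p-1}$ down from $B_{0}^{1}=2\Im(q_{0})$ produces $2\Im(q_{0})\,p!=2\Im(q_{0})\,(k+1)!$ when $k=p-1$, which should be reconciled with the factor displayed in the statement.
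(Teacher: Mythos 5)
Your proposal is correct and is essentially the paper's own proof of Lemma~\ref{lemma3}: the same reduction $\de_{c}(\de_{s}\de_{c})^{k}\Delta_{q_{0}}^{p}=2p\,(\de_{c}\de_{s})^{k}\bigl(\Delta_{q_{0}}^{p-1}\cdot(x-\Re(q_{0}))\bigr)$, the same collapse of Proposition~\ref{main} via the degeneracy of the factor $x-\Re(q_{0})$ combined with Lemma~\ref{lemma2}, and the same recursion $B_{k}^{p}=p\,B_{k-1}^{p-1}$ terminating at $B_{0}^{1}=(\de_{c}\Delta_{q_{0}})(q_{0})=2\Im(q_{0})$. Your doubt about the diagonal constant resolves in your favor: the recursion really does give $2\Im(q_{0})\,p!=2\Im(q_{0})\,(k+1)!$ when $k=p-1$ (e.g.\ a direct check yields $(\de_{c}\de_{s}\de_{c}\Delta_{q_{0}}^{2})(q_{0})=4\Im(q_{0})$, not $2\Im(q_{0})$), and this is the value the paper itself uses downstream (the table in Subsection~\ref{tabel1} and the proof of Lemma~\ref{lemma8} both read the result as $2\Im(q_{0})$ times the factorial of the \emph{exponent}), so the ``$k!$'' in the displayed statement is a slip for $p!$.
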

\begin{proof}
For $k=0$ and any $p>1$ we have that $(\de_{c}\Delta_{q_{0}}^{p})(x)=2p\Delta_{q_{0}}^{p-1}(x)(x-\Re(q_{0}))$, which clearly vanishes for $x=q_{0}$.
Assume now that $k>0$ and $p>k+1$. By Proposition~\ref{main} and Corollary~\ref{cor1}, it holds
\begin{align*}
((\de_{c}(\de_{s}\de_{c})^{k})\Delta_{q_{0}}^{p})(x)&=(\de_{c}\de_{s})^{k}(2p\Delta_{q_{0}}^{p-1}(x)(x-\Re(q_{0})))\\
&=2p\left\{((\de_{c}\de_{s})^{k}\Delta_{q_{0}}^{p-1})(x)\cdot v_{s}(x-\Re(q_{0}))+\frac{1}{2}\left[\de_{s}(\de_{c}\de_{s})^{k-1}\Delta_{q_{0}}^{p-1}(x)+(\de_{c}(\de_{s}\de_{c})^{k-1}\Delta_{q_{0}}^{p-1})(x)\right]
\right\}
\end{align*}
Now, for $x=q_{0}$ we have that $(v_{s}(x-\Re(q_{0})))_{|_{x=q_{0}}}=0$ and, by Lemma~\ref{lemma2} $(\de_{s}(\de_{c}\de_{s})^{k-1}\Delta_{q_{0}}^{p-1})(q_{0})=0$. Hence, the only piece that survives is $p(\de_{c}(\de_{s}\de_{c})^{k-1}\Delta_{q_{0}}^{p-1})(q_{0})$. Iterating the same computation and argument $k-1$ times, we obtain
\begin{align*}
((\de_{c}(\de_{s}\de_c)^{k})\Delta_{q_{0}}^{p})(q_{0})&=p\cdot (p-1)\cdot\ldots\cdot(p-(k-1))(\de_{c}\Delta_{q_{0}}^{p-k})(q_{0})\\
&=p\cdot (p-1)\cdot\ldots\cdot(p-(k-1))\Big[2(p-k)\Delta_{q_{0}}^{p-k-1}(q_{0})(q_{0}-\Re(q_{0}))\Big](q_{0})=0
\end{align*}
Assume now that $k=p-1$. For $k=1$ we get
$$
\de_{c}\Delta_{q_{0}}=2(x-\Re(q_{0})),
$$
which, for $x=q_{0}$ is equal to $2\Im(q_{0})$. 
Now, for a general $k>1$, repeating the  previous computation, we get
$$
((\de_{c}(\de_{s}\de_{c})^{k-1})\Delta_{q_{0}}^{k})(q_{0})=k\cdot (k-1)\cdot\ldots\cdot1(\de_{c}\Delta_{q_{0}})(q_{0})=2\Im(q_{0})k!,$$
which gives the thesis.
%
%
%
%
\end{proof}

We now pass to study $\Delta_{q_{0}}^{k}(x)(x-q_{0})$. First of all we saw in Corollary~\ref{cor1} that, when $k=0$, $\de_{s}(x-q_{0})\equiv 1$.
We will prove a suitable generalization in the following result.

\begin{theorem}\label{lemma7}
Let $q_{0}\in\HH$ and $p\in\mathbb{N}\setminus\{0\}$. Then, for any $0\leq k\leq p$ we have
$$
(\de_{s}(\de_{c}\de_{s})^{k}(\Delta_{q_{0}}^{p}(x)(x-q_{0})))_{|_{x=q_{0}}}=
\begin{cases}
k!,\qquad\mbox{ if }k=p,\\
0,\qquad\mbox{ otherwise.}
\end{cases}
$$
\end{theorem}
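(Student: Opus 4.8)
The plan is to reduce everything to the already-established evaluations of $(\de_{s}\de_{c})$-type operators on powers of $\Delta_{q_{0}}$, by exploiting that the extra factor $g:=x-q_{0}$ is almost completely annihilated by $\de_{c}\de_{s}$. Set $f:=\Delta_{q_{0}}^{p}$ and $g:=x-q_{0}$, so that the function under consideration is the slice product $f\cdot g$. From Corollary~\ref{cor1}(1) I record $v_{s}g=\alpha-q_{0}$, $\de_{s}g=\de_{c}g\equiv 1$, and $(\de_{c}\de_{s})g=(\de_{s}\de_{c})g\equiv 0$. The last identity is the crucial one: iterating it gives $(\de_{c}\de_{s})^{j}g=(\de_{s}\de_{c})^{j}g\equiv 0$ and $\de_{c}(\de_{s}\de_{c})^{j}g=\de_{s}(\de_{c}\de_{s})^{j}g\equiv 0$ for every $j\geq 1$, so that in any Leibniz-type expansion only the value $v_{s}g$ and the first derivatives $\de_{s}g,\de_{c}g$ of the factor $g$ can survive.

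First I would apply Proposition~\ref{main} to $(\de_{c}\de_{s})^{k}(f\cdot g)$ for $k\geq 1$ and feed in these vanishing $g$-derivatives. The term $v_{s}f\cdot((\de_{c}\de_{s})^{k}g)$ dies, the whole second double sum dies (each of its $g$-factors is a $(\de_{s}\de_{c})^{h}g$ or $(\de_{c}\de_{s})^{k-h}g$ with $h,k-h\geq 1$), and in the first double sum only the endpoints $h=0$ and $h=k-1$ contribute, where $\de_{c}g$ respectively $\de_{s}g$ equal $1$. This collapses the expression to
\[
(\de_{c}\de_{s})^{k}(f\cdot g)=\big((\de_{c}\de_{s})^{k}f\big)\cdot(\alpha-q_{0})+\tfrac{1}{2}\Big[\de_{s}(\de_{c}\de_{s})^{k-1}f+\de_{c}(\de_{s}\de_{c})^{k-1}f\Big].
\]
I expect this bookkeeping --- verifying that all but two of the binomial terms vanish --- to be the only delicate step; it specializes correctly to Lemma~\ref{firstiteration} when $k=1$.

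Next I would apply $\de_{s}$ to both sides, using the Leibniz rule~\eqref{leibniz} for $\de_{s}$ together with~\eqref{basicslice}. On the first summand, since $\alpha-q_{0}=v_{s}g$ satisfies $\de_{s}(v_{s}g)=0$, only the term where $\de_{s}$ hits the left factor survives; on the second summand $\de_{s}^{2}=0$ kills the $\de_{s}(\de_{c}\de_{s})^{k-1}f$ piece and leaves $\de_{s}\de_{c}(\de_{s}\de_{c})^{k-1}f=(\de_{s}\de_{c})^{k}f$. Hence
\[
\de_{s}(\de_{c}\de_{s})^{k}(f\cdot g)=\big(\de_{s}(\de_{c}\de_{s})^{k}f\big)\cdot(\alpha-q_{0})+\tfrac{1}{2}(\de_{s}\de_{c})^{k}f.
\]
Now I evaluate at $q_{0}$. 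Both factors of the first summand are $v_{s}$-invariant (one is a $\de_{s}$-image, the other is $v_{s}g$), so by~\eqref{basicslice} their slice product is the ordinary pointwise product, and the summand equals $(\de_{s}(\de_{c}\de_{s})^{k}\Delta_{q_{0}}^{p})(q_{0})\,(\alpha_{0}-q_{0})$. This vanishes in both regimes: for $k<p$ by Lemma~\ref{lemma2}, and for $k=p$ because $\de_{s}(\de_{c}\de_{s})^{p}\Delta_{q_{0}}^{p}\equiv 0$ by Corollary~\ref{cor1}(3). The second summand is $\tfrac{1}{2}((\de_{s}\de_{c})^{k}\Delta_{q_{0}}^{p})(q_{0})$, which by Lemma~\ref{lemma5} equals $0$ for $k<p$ and $\tfrac{1}{2}\cdot 2\,p!=p!=k!$ for $k=p$; this is exactly the claimed dichotomy. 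Finally the base case $k=0$ follows directly from~\eqref{leibniz}, since $\de_{s}(f\cdot g)=\de_{s}f\cdot(\alpha-q_{0})+v_{s}f$ and both $(\de_{s}\Delta_{q_{0}}^{p})(q_{0})$ and $(v_{s}\Delta_{q_{0}}^{p})(q_{0})$ vanish, giving the value $0$, in agreement with the statement as $p\geq 1>0=k$.
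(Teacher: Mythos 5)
Your proposal is correct and follows essentially the same route as the paper: both apply Proposition~\ref{main} to $\Delta_{q_{0}}^{p}\cdot(x-q_{0})$, observe that the vanishing of $(\de_{c}\de_{s})(x-q_{0})$ collapses the formula to $((\de_{c}\de_{s})^{k}\Delta_{q_{0}}^{p})\cdot v_{s}(x-q_{0})+\tfrac12[\de_{s}(\de_{c}\de_{s})^{k-1}\Delta_{q_{0}}^{p}+\de_{c}(\de_{s}\de_{c})^{k-1}\Delta_{q_{0}}^{p}]$, apply $\de_{s}$ using~\eqref{basicslice}, and conclude with Lemmas~\ref{lemma2} and~\ref{lemma5}. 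The only difference is cosmetic: you cite Corollary~\ref{cor1}(3) for the vanishing of the first summand when $k=p$, where the paper invokes the (equivalent) extension of Lemma~\ref{lemma2} to all $k$.
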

\begin{proof}
The result is trivially true when $k=0$ as the spherical value and derivative of $\Delta_{q_{0}}^{p}$ vanish at $q_{0}$.
Let now $k>0$ and $p>k$. By Proposition~\ref{main}, we have
\begin{align*}
(\de_{s}(\de_{c}\de_{s})^{k})(\Delta_{q_{0}}^{p}(x)(x-q_{0}))&=\de_{s}\Big\{((\de_{c}\de_{s})^{k}\Delta_{q_{0}}^{p}(x)) v_{s}(x-q_{0})\\
&\quad\quad\quad+\frac{1}{2}\left[\binom{k-1}{0}\de_{s}(\de_{c}\de_{s})^{k-1}\Delta_{q_{0}}^{p}(x)+\binom{k-1}{k-1}\de_{c}(\de_{s}\de_{c})^{k-1}\Delta_{q_{0}}^{p}(x)\right]\Big\}.
\end{align*}
Using formulas~\eqref{basicslice}, we conclude that,
$$
(\de_{s}(\de_{c}\de_{s})^{k})(\Delta_{q_{0}}^{p}(x)(x-q_{0}))=(\de_{s}(\de_{c}\de_{s})^{k}\Delta_{q_{0}}^{p})\cdot v_{s}(x-q_{0})+\frac{1}{2}\left[(\de_{s}\de_{c})^{k}\Delta_{q_{0}}^{p}\right],
$$
and thanks to Lemmas~\ref{lemma2} and~\ref{lemma5} we get the thesis.

If instead, $k=p$, Lemmas~\ref{lemma2} and~\ref{lemma5} imply that
$$
(\de_{s}(\de_{c}\de_{s})^{k})(\Delta_{q_{0}}^{k}(x)(x-q_{0}))_{|_{x=q_{0}}}=\frac{1}{2}\left[((\de_{s}\de_{c})^{k}\Delta_{q_{0}}^{k})\right](q_{0})=\frac{1}{2}\cdot2k!=k!
$$
obtaining the last part of the thesis.
\end{proof}

%

The following is the last of our technical lemmas.
\begin{lemma}\label{lemma8}
Let $q_{0}\in\HH$ and $p\in\mathbb{N}\setminus\{0\}$. Then, for any $0\leq k\leq p$ we have
$$
((\de_{c}\de_{s})^{k}(\Delta_{q_{0}}^{p}(x)(x-q_{0})))_{|_{x=q_{0}}}=0.
$$
\end{lemma}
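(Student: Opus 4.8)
The plan is to feed the slice product $\Delta_{q_{0}}^{p}\cdot(x-q_{0})$ into Proposition~\ref{main}, with $f=\Delta_{q_{0}}^{p}$ and $g=x-q_{0}$, and then to use Corollary~\ref{cor1}(1) to annihilate almost every summand. Since $\de_{s}(x-q_{0})=\de_{c}(x-q_{0})\equiv1$ and $(\de_{c}\de_{s})(x-q_{0})=(\de_{s}\de_{c})(x-q_{0})\equiv0$, any factor $(\de_{c}\de_{s})^{m}g$ or $(\de_{s}\de_{c})^{m}g$ with $m\geq1$ dies, while $\de_{c}(\de_{s}\de_{c})^{h}g$ survives only at $h=0$ and $\de_{s}(\de_{c}\de_{s})^{m}g$ only at $m=0$. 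Doing this bookkeeping in the formula of Proposition~\ref{main} makes the second sum $S_{2}$ vanish entirely, and in the first sum only the extreme indices (with $\binom{k-1}{0}=\binom{k-1}{k-1}=1$) contribute, so that for $k\geq1$ everything collapses to
$$
(\de_{c}\de_{s})^{k}(\Delta_{q_{0}}^{p}(x)(x-q_{0}))=\big((\de_{c}\de_{s})^{k}\Delta_{q_{0}}^{p}\big)\cdot v_{s}(x-q_{0})+\frac{1}{2}\Big[\de_{s}(\de_{c}\de_{s})^{k-1}\Delta_{q_{0}}^{p}+\de_{c}(\de_{s}\de_{c})^{k-1}\Delta_{q_{0}}^{p}\Big].
$$

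Next I would evaluate at $x=q_{0}$. Note that $\Delta_{q_{0}}$ is slice preserving and that $\de_{s},\de_{c}$ send slice preserving functions to slice preserving functions (they act on real stem components), so $(\de_{c}\de_{s})^{k}\Delta_{q_{0}}^{p}$ is slice preserving and its slice product with $v_{s}(x-q_{0})$ is just the pointwise product; moreover by Corollary~\ref{cor1}(1) we have $v_{s}(x-q_{0})(q_{0})=\Re(q_{0})-q_{0}=-\Im(q_{0})$. The case $k=0$ is immediate because $\Delta_{q_{0}}^{p}(q_{0})=0$. For $1\leq k<p$ every surviving summand vanishes separately: the first one by Theorem~\ref{lemma6} (as $k\neq p$), the term $\de_{s}(\de_{c}\de_{s})^{k-1}\Delta_{q_{0}}^{p}$ by Lemma~\ref{lemma2}, and the term $\de_{c}(\de_{s}\de_{c})^{k-1}\Delta_{q_{0}}^{p}$ by Lemma~\ref{lemma3} (since $k-1\neq p-1$). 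Hence the evaluation is $0$ throughout this range.

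The only delicate case, and the one I expect to be the main obstacle, is $k=p$, where two of the three surviving terms are individually nonzero and the whole point is that they cancel. The middle term still dies by Lemma~\ref{lemma2}. By Theorem~\ref{lemma6}, $(\de_{c}\de_{s})^{p}\Delta_{q_{0}}^{p}\equiv p!$ (it is a degree-zero slice preserving function), so the first term contributes $p!\cdot(-\Im(q_{0}))=-p!\,\Im(q_{0})$; meanwhile Lemma~\ref{lemma3} shows that $\frac{1}{2}\de_{c}(\de_{s}\de_{c})^{p-1}\Delta_{q_{0}}^{p}(q_{0})$ is a real multiple of $\Im(q_{0})$. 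The crux of the argument is that this real multiple is exactly $p!$, so that the factor $\Im(q_{0})$ produced by $v_{s}(x-q_{0})$ at $q_{0}$ is matched precisely by the factor $\Im(q_{0})$ coming from $\de_{c}(\de_{s}\de_{c})^{p-1}\Delta_{q_{0}}^{p}$, giving $-p!\,\Im(q_{0})+p!\,\Im(q_{0})=0$. I would therefore take particular care with the factorial normalisations in the invocation of Lemma~\ref{lemma3}, since it is the exact agreement of these two coefficients, rather than any termwise vanishing, that forces the result in the top case $k=p$.
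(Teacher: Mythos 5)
Your proof is correct and follows essentially the same route as the paper's: the same reduction via Proposition~\ref{main} and Corollary~\ref{cor1} to the three-term formula $((\de_{c}\de_{s})^{k}\Delta_{q_{0}}^{p})\cdot v_{s}(x-q_{0})+\frac{1}{2}[\de_{s}(\de_{c}\de_{s})^{k-1}\Delta_{q_{0}}^{p}+\de_{c}(\de_{s}\de_{c})^{k-1}\Delta_{q_{0}}^{p}]$, the same termwise vanishing for $k<p$ via Theorem~\ref{lemma6} and Lemmas~\ref{lemma2} and~\ref{lemma3}, and the same cancellation $-p!\,\Im(q_{0})+p!\,\Im(q_{0})=0$ at $k=p$. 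Your caution about the factorial normalisation is well placed: the value you need (and the one the paper's own proof of this lemma and its summary table actually use) is $\de_{c}(\de_{s}\de_{c})^{p-1}\Delta_{q_{0}}^{p}(q_{0})=2\Im(q_{0})\,p!$, rather than the $2\Im(q_{0})(p-1)!$ that a literal reading of the displayed statement of Lemma~\ref{lemma3} would give.
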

\begin{proof}
For $k=0$ the result is trivial as $(\Delta_{q_{0}}^{p}(x)(x-q_{0}))(q_{0})=0$ for any $p\in\mathbb{N}$. 

If $1=k<p$, we get
$$
(\de_{c}\de_{s})\Delta_{q_{0}}^{p}(x)(x-q_{0})=((\de_{c}\de_{s})\Delta_{q_{0}}^{p}) v_{s}(x-q_{0})+\frac{1}{2}\left[\de_{s}\Delta_{q_{0}}^{p}+\de_{c}\Delta_{q_{0}}^{p}\right],
$$
and as $p$ is at least equal to $2$, then everything vanishes at $x=q_{0}$ and we obtain that $((\de_{c}\de_{s})(\Delta_{q_{0}}^{p}(x-q_{0})))(q_{0})=0$.
If $1<k<p$, then
\begin{align*}
((\de_{c}\de_{s})^{k})(\Delta_{q_{0}}^{p}(x)(x-q_{0}))&=((\de_{c}\de_{s})^{k}\Delta_{q_{0}}^{p}(x))\cdot v_{s}(x-q_{0})\\
&\quad\quad\quad+\frac{1}{2}\left[\binom{k-1}{0}\de_{s}(\de_{c}\de_{s})^{k-1}\Delta_{q_{0}}^{p}(x)+\binom{k-1}{k-1}\de_{c}(\de_{s}\de_{c})^{k-1}\Delta_{q_{0}}^{p}(x)\right].
\end{align*}
Now, as we are assuming that $k<p$, by Lemmas~\ref{lemma2}, ~\ref{lemma3} and Theorem~\ref{lemma6}, the previous
expression vanishes at $x=q_{0}$.

If now $k=p$, repeating the previous computation we get
$$
((\de_{c}\de_{s})^{k})(\Delta_{q_{0}}^{k}(x)(x-q_{0}))=((\de_{c}\de_{s})^{k}\Delta_{q_{0}}^{k}(x))\cdot v_{s}(x-q_{0})+\frac{1}{2}\left[\de_{s}(\de_{c}\de_{s})^{k-1}\Delta_{q_{0}}^{k}(x)+\de_{c}(\de_{s}\de_{c})^{k-1}\Delta_{q_{0}}^{k}(x)\right].
$$
Now using Theorem~\ref{lemma6}, Lemma~\ref{lemma2} and Lemma~\ref{lemma3} we get that
$$
(((\de_{c}\de_{s})^{k})(\Delta_{q_{0}}^{k}(x)(x-q_{0})))_{|_{x=q_{0}}}=k!\cdot (-\Im(q_{0}))+\frac{1}{2}\left[2k!\cdot \Im(q_{0})\right]=0.
$$
\end{proof}

\subsection{A comprehensive table}\label{tabel1}
For the convenience of the reader we collect all the non-vanishing results obtained  in the previous computations in a handy table
\begin{table}[h!]
\centering
\begin{tabular}{ |l||c|c|  }
 \hline
 \multicolumn{3}{|c|}{A summary of the previous results} \\
 \hline
 Result& Index values &Result reference\\
 \hline
$(\de_{s}\de_{c})^{k}\Delta_{q_{0}}^{k}=2k!$ & $k\geq 1$    & Lemma~\ref{lemma5} \\
$(\de_{c}\de_{s})^{k}\Delta_{q_{0}}^{k}=k!$ & $k\geq 1$    & Theorem~\ref{lemma6} \\
$\de_{s}(\de_{c}\de_{s})^{k}(\Delta_{q_{0}}^{k}(x)(x-q_{0}))=k!$ & $k\geq 0$    & Theorem~\ref{lemma7} \\
$(\de_{c}(\de_{s}\de_{c})^{k-1}(\Delta_{q_{0}}^{k}))_{|_{x=q_{0}}}=2\Im(q_{0})k!$ & $k\geq 1$    & Lemma~\ref{lemma3} \\
 \hline
\end{tabular}
\end{table}
\section{Coefficients of the spherical expansion}\label{coefficients}
Thanks to the results of the previous section, we are able to give a differential representation of the coefficients of the spherical expansion of a slice regular function.
First of all recall that if $f\in\So(\Omega_{D})$ and $q_{0}\in\Omega_{D}\setminus\R$, then there exists a symmetric domain, namely a Cassini ball $U(q_{0},r)\subset\Omega_{D}$, such that,
for any $x\in U(q_{0},r)$ we have
\begin{equation}\label{sphericalexpansion}
f(x)=s_{0}+(x-q_{0})s_{1}+\Delta_{q_{0}}(x)s_{2}+\Delta_{q_{0}}(x)(x-q_{0})s_{3}+\dots+\Delta_{q_{0}}^{k}(x)s_{2k}+(\Delta_{q_{0}}^{k}(x)(x-q_{0}))s_{2k+1}+\dots.
\end{equation}
Now, it is well known (see~\cite{altavilladiff,G-P-series,stoppatosph}) that $s_{0}=f(q_{0})$, $s_{1}=(\de_{s}f)(q_{0})$ and that $s_{2}=((\de_{c}\de_{s})f)(q_{0})$. We are able to describe in the same way all the other coefficients.
\begin{theorem}\label{sphcoeff}
Let $f$ be a slice regular function defined on a symmetric domain $\Omega_{D}$ and let $q_{0}\in\Omega_{D}\setminus\R$. 
Then, we have that the coefficients of the spherical expansion~\eqref{sphericalexpansion} are given as follows
$$
\begin{cases}
s_{2k}=\frac{1}{k!}((\de_{c}\de_{s})^{k}f)(q_{0}),\\
s_{2k+1}=\frac{1}{k!}(\de_{s}(\de_{c}\de_{s})^{k}f)(q_{0}).
\end{cases}
$$
\end{theorem}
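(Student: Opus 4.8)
The plan is to apply each of the operators $(\de_c\de_s)^k$ and $\de_s(\de_c\de_s)^k$ directly to the spherical expansion~\eqref{sphericalexpansion} and then to evaluate at $q_0$, reading off the answer from the computations of Section~\ref{alternate}. Since on a slightly smaller Cassini ball $U(q_0,r')\subset U(q_0,r)$ the series and all its iterated $\de_s$, $\de_c$ derivatives converge uniformly (the defining stem function of $f$ is holomorphic, hence the termwise derivatives enjoy Cauchy-type estimates, and near $q_0\notin\R$ the division by $\beta$ in $\de_s$ is harmless), the operators commute with the summation. Thus I would first record
$$
((\de_c\de_s)^k f)(q_0)=\sum_{p\ge 0}\Big[\,((\de_c\de_s)^k\Delta_{q_0}^p)(q_0)\,s_{2p}+\big((\de_c\de_s)^k(\Delta_{q_0}^p(x)(x-q_0))\big)\big|_{x=q_0}\,s_{2p+1}\,\Big],
$$
and the analogous identity with $\de_s(\de_c\de_s)^k$ in front. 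The whole content of the proof is then to show that in each sum exactly one coefficient survives.

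For the even coefficients I would analyse the right-hand side above term by term. Every odd contribution vanishes: for $p\ge k$ this is precisely Lemma~\ref{lemma8}, while for $p<k$ the polynomial $\Delta_{q_0}^p(x)(x-q_0)$ has degree $2p+1<2k$, so the degree-lowering behaviour recorded in Remark~\ref{lowering} (equivalently Lemma~\ref{lemmapowers}) forces $(\de_c\de_s)^k$ to annihilate it identically. Among the even contributions, Theorem~\ref{lemma6} gives $((\de_c\de_s)^k\Delta_{q_0}^p)(q_0)=0$ for $k<p$, the same degree count disposes of $p<k$, and the single term $p=k$ yields $((\de_c\de_s)^k\Delta_{q_0}^k)(q_0)=k!$. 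Hence $((\de_c\de_s)^k f)(q_0)=k!\,s_{2k}$, which is the first formula.

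For the odd coefficients the roles are symmetric. Applying $\de_s(\de_c\de_s)^k$, each even contribution $(\de_s(\de_c\de_s)^k\Delta_{q_0}^p)(q_0)\,s_{2p}$ vanishes by Lemma~\ref{lemma2} (whose statement already covers all $k\in\mathbb{N}$, the case $k\ge p$ being the degree-lowering one). For the odd contributions, Theorem~\ref{lemma7} gives $0$ when $k<p$, the degree argument gives $0$ when $k>p$, and the term $p=k$ produces $\de_s(\de_c\de_s)^k(\Delta_{q_0}^k(x)(x-q_0))|_{x=q_0}=k!$. Therefore $(\de_s(\de_c\de_s)^k f)(q_0)=k!\,s_{2k+1}$, giving the second formula. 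The genuinely new point—and the step I would treat most carefully—is the legitimacy of differentiating the series termwise and evaluating at $q_0$; once that is granted, the selection of the unique non-vanishing term is immediate from the table in Subsection~\ref{tabel1}.
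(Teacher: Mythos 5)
Your proposal is correct and follows essentially the same route as the paper: apply $(\de_{c}\de_{s})^{k}$ and $\de_{s}(\de_{c}\de_{s})^{k}$ termwise to the spherical expansion, kill the terms with $p<k$ by the degree-lowering observation and the terms with $p>k$ (and the unwanted parity) by Lemma~\ref{lemma2}, Theorem~\ref{lemma6}, Theorem~\ref{lemma7} and Lemma~\ref{lemma8}, leaving only the $p=k$ term with value $k!$. The one point where you go beyond the paper is in explicitly flagging (and sketching a justification for) the legitimacy of differentiating the series term by term, which the paper takes for granted.
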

\begin{proof}
As the result is trivially true for $k=0$, we start from $k=1$. Now, even for $k=1$ we already know that $s_{2}=((\de_{c}\de_{s})f)(q_{0})$. We then work on $s_{3}$.
If we apply the differential operator $\de_{s}(\de_{c}\de_{s})$ to $f$ as represented in Formula~\eqref{sphericalexpansion}, we obtain
\begin{align*}
(\de_{s}(\de_{c}\de_{s})f)(x)=&(\de_{s}(\de_{c}\de_{s}\Delta_{q_{0}}(x)(x-q_{0}))s_{3}+(\de_{s}(\de_{c}\de_{s})\Delta_{q_{0}}^{2}(x))s_{4}\\
&+\dots+(\de_{s}(\de_{c}\de_{s})\Delta_{q_{0}}^{k}(x))s_{2k}+(\de_{s}(\de_{c}\de_{s})(\Delta_{q_{0}}^{k}(x)(x-q_{0})))s_{2k+1}+\dots.
\end{align*}
But now, thanks to Theorem~\ref{lemma7} and to Lemma~\ref{lemma2} we have
$$
(\de_{s}(\de_{c}\de_{s})f)(q_{0})=s_{3}.
$$

Assume now that $k>1$.  We start from the even coefficients. If we apply the differential operator $(\de_{c}\de_{s})^{k}$ to $f$ as represented in Formula~\eqref{sphericalexpansion}, we obtain
\begin{align*}
((\de_{c}\de_{s})^{k}f)(x)=&((\de_{c}\de_{s})^{k}\Delta_{q_{0}}^{k}(x))s_{2k}+((\de_{c}\de_{s})^{k}\Delta_{q_{0}}^{k}(x)(x-q_{0}))s_{2k+1}\\
&+\dots+((\de_{c}\de_{s})^{k}\Delta_{q_{0}}^{p}(x))s_{2p}+((\de_{c}\de_{s})^{k}(\Delta_{q_{0}}^{p}(x)(x-q_{0})))s_{2p+1}+\dots.
\end{align*}
But now, thanks to Theorem~\ref{lemma6} and to Lemma~\ref{lemma8} we have
$$
((\de_{c}\de_{s})^{k}f)(q_{0})=k!s_{2k}.
$$
Analogously we have 
\begin{align*}
(\de_{s}(\de_{c}\de_{s})^{k}f)(x)=&(\de_{s}(\de_{c}\de_{s})^{k}\Delta_{q_{0}}^{k}(x)(x-q_{0}))s_{2k+1}+(\de_{s}(\de_{c}\de_{s})^{k}\Delta_{q_{0}}^{k+1}(x))s_{2k+2}\\
&+\dots+(\de_{s}(\de_{c}\de_{s})^{k}\Delta_{q_{0}}^{p}(x))s_{2p}+(\de_{s}(\de_{c}\de_{s})^{k}\Delta_{q_{0}}^{p}(x)(x-q_{0}))s_{2p+1}+\dots,
\end{align*}
and thanks to Theorem~\ref{lemma7} and to Lemma~\ref{lemma2} we have
$$
(\de_{s}(\de_{c}\de_{s})^{k}f)(q_{0})=k!s_{2k+1},
$$
concluding the proof.
\end{proof}

We give a first corollary about slice preserving and one-slice preserving functions.
\begin{corollary}
Let $f:\Omega_{D}\to\HH$ be a slice regular function, $q_{0}\in\Omega_{D}\setminus\R$ and let $s_{n}(q_{0})$ be the spherical coefficients of $f$ at $q_{0}$.
Then the following facts hold true:
\begin{enumerate}
\item the function $f$ is slice preserving if and only if, for any $k\in\mathbb{N}$, $s_{2k+1}(q_{0})\in\R$ and, if $q_{0}\in\C_{I}$, $s_{2k}(q_{0})\in\C_{I}$.
\item the function $f$ is $\C_{J}$-preserving if and only if $s_{2k+1}(q_{0})\in\C_{J}$ and, for any $q_{0}\in\C_{J}$,
$s_{2k}(q_{0})\in\C_{J}$.
\end{enumerate}
\end{corollary}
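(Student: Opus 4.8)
The plan is to reduce both statements to a single structural fact: the operators $v_{s}$, $\de_{s}$ and $\de_{c}$ preserve the class of slice preserving functions and the class of $\C_{J}$-preserving functions. Writing $f=\mathcal{I}(F_{0}+\sqrt{-1}F_{1})$, one has $v_{s}f=\mathcal{I}(F_{0})$, $\de_{s}f=\mathcal{I}(F_{1}/\beta)$, and, from the computation in Lemma~\ref{derivativesphvalue}, $\de_{c}f=\mathcal{I}\big(\tfrac12(\de_{\alpha}F_{0}+\de_{\beta}F_{1})+\sqrt{-1}\,\tfrac12(\de_{\alpha}F_{1}-\de_{\beta}F_{0})\big)$. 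In every case the new stem components are produced from $F_{0},F_{1}$ by real-coefficient differential operations and division by the real quantity $\beta$; hence if $F_{0},F_{1}$ are $\R$-valued (resp.\ $\C_{J}$-valued) so are the components of $v_{s}f$, $\de_{s}f$ and $\de_{c}f$. I would isolate this as a short preliminary lemma, as it drives both directions.

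For the forward implications I would simply iterate that lemma. If $f$ is slice preserving then $(\de_{c}\de_{s})^{k}f$ and $\de_{s}(\de_{c}\de_{s})^{k}f$ are slice preserving for every $k$. By Theorem~\ref{sphcoeff}, $s_{2k}=\tfrac{1}{k!}((\de_{c}\de_{s})^{k}f)(q_{0})$ is the value at $q_{0}=\alpha_{0}+I\beta_{0}$ of a slice preserving function, so it lies in $\C_{I}$; meanwhile $s_{2k+1}=\tfrac{1}{k!}(\de_{s}(\de_{c}\de_{s})^{k}f)(q_{0})$ is the value of a spherical derivative, whose inducing stem function has vanishing $\sqrt{-1}$-part, so its value coincides with the (now real) zero-order component and therefore $s_{2k+1}\in\R$. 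Replacing ``$\R$-valued / slice preserving'' by ``$\C_{J}$-valued / $\C_{J}$-preserving'' throughout, and using that $\C_{J}$ is closed under products, yields at once $s_{2k},s_{2k+1}\in\C_{J}$ when $f$ is $\C_{J}$-preserving and $q_{0}\in\C_{J}$, which is the forward part of $(2)$.

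The converse implications are where the real work lies, and I would handle them through the uniqueness of the spherical expansion. Reconstructing $f$ from its coefficients via~\eqref{sphericalexpansion}, the building blocks $\Delta_{q_{0}}^{k}$ are slice preserving (real stem components), while $\Delta_{q_{0}}^{k}(x)(x-q_{0})$ is $\C_{I}$-preserving; thus the stem components $F_{0},F_{1}$ of $f$ are assembled as right combinations of $\R$- and $\C_{I}$-valued functions weighted by the $s_{n}$. For $(2)$ the conclusion is immediate: $\C_{J}$-valued weights against these blocks stay $\C_{J}$-valued, so $f$ is $\C_{J}$-preserving. For $(1)$ the main obstacle is that the two families of conditions must be used \emph{jointly}: the membership $s_{2k}\in\C_{I}$ alone only forces $\C_{I}$-preservation, and it is the reality condition $s_{2k+1}\in\R$ that must cancel the $I$-imaginary part contributed by the block $\Delta_{q_{0}}^{k}(x)(x-q_{0})$, pushing $F_{0},F_{1}$ down to genuinely $\R$-valued functions. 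I expect the cleanest way to close this is to show directly that the coefficient conditions force $f(\Omega_{D}\cap\C_{L})\subseteq\C_{L}$ for every $L\in\SF$, which by the standard value characterization is exactly slice preservation; carrying out this value-space bookkeeping, rather than the forward estimates, is the delicate step.
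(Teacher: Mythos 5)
Your forward direction is sound and is exactly what the paper means by ``the necessity follows from the expression of the spherical coefficients given in Theorem~\ref{sphcoeff}'': the operators $v_{s}$, $\de_{s}$, $\de_{c}$ act on stem components by real-linear differential operations and division by $\beta$, hence preserve both classes, and evaluating $(\de_{c}\de_{s})^{k}f$ and $\de_{s}(\de_{c}\de_{s})^{k}f$ at $q_{0}$ gives the stated memberships.

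The gap is in the converse, specifically in part $(1)$, which you yourself label ``the delicate step'' and then do not carry out: you only state that you \emph{expect} the coefficient conditions to force $f(\Omega_{D}\cap\C_{L})\subseteq\C_{L}$ for every $L$. As written, the converse of $(1)$ is asserted rather than proved. Moreover the route you choose --- reconstructing $f$ from the full expansion --- is both harder than necessary and only local, since~\eqref{sphericalexpansion} holds on a Cassini ball $U(q_{0},R)$ and not on all of $\Omega_{D}$, so you would still have to patch the local conclusions together. The paper's sufficiency argument bypasses all of this by using only the $k=0$ coefficients, $s_{0}(q_{0})=f(q_{0})$ and $s_{1}(q_{0})=(\de_{s}f)(q_{0})$. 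For $(1)$: $s_{1}(q_{0})=F_{1}(\alpha+i\beta)/\beta\in\R$ for every $q_{0}$ forces $F_{1}$ to be real-valued, and then $s_{0}(\alpha+I\beta)=F_{0}+IF_{1}\in\C_{I}$ for \emph{every} $I$ (with $\alpha,\beta$ fixed) forces $F_{0}(\alpha+i\beta)\in\bigcap_{I\in\SF}\C_{I}=\R$, so $f$ is slice preserving. For $(2)$: $f(\alpha\pm J\beta)\in\C_{J}$ gives $F_{0}=\tfrac12\left(f(\alpha+J\beta)+f(\alpha-J\beta)\right)\in\C_{J}$ and $F_{1}=-\tfrac{J}{2}\left(f(\alpha+J\beta)-f(\alpha-J\beta)\right)\in\C_{J}$. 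No information from $k\geq1$ is needed; supplying this value-space computation is precisely the missing step in your argument.
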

\begin{proof}
The necessity follows from the expression of the spherical coefficients given in Theorem~\ref{sphcoeff} while the sufficiency follows directly just by looking at $s_{0}$ and $s_{1}$.
\end{proof}

In view of the next corollary we recall some basic fact about zeros of slice regular functions. Let $f:\Omega_{D}\to \HH$ be a slice regular function defined on a symmetric
domain and $q_{0}\in\Omega_{D}\setminus\R$. Let $m,n\in\mathbb{N}$ and $q_{1},\dots,q_{n}\in\SF_{q_{0}}$ (with $q_{i}\neq\bar q_{i+1}$ for all $i=1,\dots, n-1$) be such that
$$
f(x)=\Delta_{q_{0}}^{m}(x)(x-q_{1})\cdot (x-q_{2})\cdot \dots\cdot (x-q_{n})\cdot g(x),
$$
for some slice regular function $g:\Omega_{D}\to\HH$ which does not have any zeros in $\SF_{q_{0}}$. We then say that $2m$ is the spherical multiplicity
of $\SF_{q_{0}}$ and that $n$ is the isolated multiplicity of $q_{1}$.
As it was pointed out in~\cite{stoppatosph} and in~\cite[Remark 8.11]{G-S-St}, if $f$ is a slice regular function on a symmetric domain $\Omega_{D}$
and $q_{0}\in\Omega_{D}\setminus\R$, then if $s_{2n}$ or $s_{2n+1}$ is the first non vanishing coefficient in the spherical expansion~\eqref{sphericalexpansion},
then $2n$ is the spherical multiplicity of $f$ at $\SF_{q_{0}}$. Moreover, $q_{0}$ has a positive isolated multiplicity if and only if $s_{2n}=0$. These
information can be translated in new formulas thanks to our results.
\begin{corollary}
Let $f$ be a slice regular function defined on a symmetric domain $\Omega_{D}$ and let $q_{0}\in\Omega_{D}\setminus\R$. 
Then, if for any $k=0,\dots, n-1$ we have that $((\de_{c}\de_{s})^{k}f)(q_{0})=(\de_{s}(\de_{c}\de_{s})^{k}f)(q_{0})=0$ and 
$((\de_{c}\de_{s})^{n}f)(q_{0})\neq 0$ or $(\de_{s}(\de_{c}\de_{s})^{n}f)(q_{0})\neq0$
then $2n$ is the spherical multiplicity of $f$ at $\SF_{q_{0}}$. Moreover, $q_{0}$ has a positive isolated multiplicity if and only if $((\de_{c}\de_{s})^{n}f)(q_{0})=0$. 
\end{corollary}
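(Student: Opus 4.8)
The plan is to treat the statement as a direct translation of Theorem~\ref{sphcoeff} into the language of spherical multiplicity, using the characterization of the latter recalled just before the statement (from~\cite{stoppatosph} and~\cite[Remark 8.11]{G-S-St}): if $s_{2n}$ or $s_{2n+1}$ is the first non-vanishing coefficient of the spherical expansion~\eqref{sphericalexpansion} at $q_{0}$, then $2n$ is the spherical multiplicity of $f$ at $\SF_{q_{0}}$, and $q_{0}$ carries a positive isolated multiplicity if and only if $s_{2n}=0$. No new analytic input is needed; the whole argument is bookkeeping on indices.

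First I would rewrite the vanishing hypotheses in terms of the coefficients $s_{m}$. By Theorem~\ref{sphcoeff} we have $s_{2k}=\tfrac{1}{k!}((\de_{c}\de_{s})^{k}f)(q_{0})$ and $s_{2k+1}=\tfrac{1}{k!}(\de_{s}(\de_{c}\de_{s})^{k}f)(q_{0})$. Since each factor $1/k!$ is a nonzero real number, the assumption that $((\de_{c}\de_{s})^{k}f)(q_{0})=(\de_{s}(\de_{c}\de_{s})^{k}f)(q_{0})=0$ for every $k=0,\dots,n-1$ is exactly equivalent to $s_{2k}=s_{2k+1}=0$ for those $k$, that is $s_{0}=s_{1}=\cdots=s_{2n-1}=0$. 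Likewise the hypothesis that $((\de_{c}\de_{s})^{n}f)(q_{0})\neq 0$ or $(\de_{s}(\de_{c}\de_{s})^{n}f)(q_{0})\neq 0$ is equivalent to $s_{2n}\neq 0$ or $s_{2n+1}\neq 0$. Hence $s_{2n}$ or $s_{2n+1}$ is precisely the first non-vanishing coefficient of the spherical expansion.

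It then remains to invoke the recalled characterization. From the fact that the first non-vanishing coefficient is $s_{2n}$ or $s_{2n+1}$ we conclude that $2n$ is the spherical multiplicity of $f$ at $\SF_{q_{0}}$. For the final claim, the recalled criterion says that $q_{0}$ has positive isolated multiplicity if and only if $s_{2n}=0$ (so that the first surviving coefficient is the odd one $s_{2n+1}$). Translating $s_{2n}=0$ back through $s_{2n}=\tfrac{1}{n!}((\de_{c}\de_{s})^{n}f)(q_{0})$, and again using $n!\neq 0$, this is equivalent to $((\de_{c}\de_{s})^{n}f)(q_{0})=0$, which is the stated formulation.

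The argument has no real obstacle, since all the substantive work is carried out in Theorem~\ref{sphcoeff} and in the cited structural results on zeros. The only points demanding a little care are the index matching---checking that the vanishing of the first $n$ pairs $(s_{2k},s_{2k+1})$ corresponds to $k$ running from $0$ to $n-1$, and that ``first non-vanishing coefficient'' refers to $s_{2n}$ or $s_{2n+1}$---together with the observation that the factorials appearing in Theorem~\ref{sphcoeff} are nonzero, so that the equivalences between the differential conditions and the coefficient conditions are genuine biconditionals rather than one-way implications.
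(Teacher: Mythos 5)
Your proposal is correct and is exactly the argument the paper intends: the corollary is stated without proof precisely because it is the immediate translation, via Theorem~\ref{sphcoeff} (and the nonvanishing of the factorials), of the recalled characterization of spherical and isolated multiplicity in terms of the first non-vanishing spherical coefficient. Your careful index bookkeeping matches the paper's setup, so there is nothing to add.
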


Having now these new nice representations of the spherical coefficients of a slice regular functions, we are able to effectively compute examples.

\begin{example}\label{expel}
We give an explicit example for the previous corollary. Consider the quaternionic polynomial 
$$
P(x)=x^{3}-x^{2}(i+j+k)+x(i-j+k)+1=(x-i)\cdot (x-j)\cdot (x-k).
$$
We want to compute its spherical coefficients in general (i.e. at a non-real point) and at $x_{0}=i$.
If $s_{n}(x)$ denote the $n$-th spherical coefficient at $x=\alpha+I\beta\in\HH\setminus\R$, then we have
\begin{align*}
& &s_{0}(i)=0,\\
 &s_{1}(x)=(\de_{s}P)(x)=3\alpha^{2}-\beta^{2}-2\alpha(i+j+k)+i-j+k, &s_{1}(i)=-1+i-j+k,\\
 &s_{2}(x)=(\de_{c}\de_{s}P)(x)=3\alpha-(i+j+k)+I\beta, &s_{2}(i)=-j-k,\\
 &s_{3}(x)=(\de_{s}\de_{c}\de_{s}P)(x)=1 &s_{3}(i)=1,
\end{align*}
and so, for any $n>3$, we have $s_{n}(x)=0$.
Therefore, near $x_{0}=i$, we have
$$
P(x)=(x-i)(-1+i-j+k)+\Delta_{i}(x)(-j-k)+\Delta_{i}(x)(x-i).
$$
\end{example}

We now deal with basic idempotent functions~\cite{A-dFLAA,A-S}.
\begin{definition}
We define the slice regular function $\mathcal{J}:\mathbb{H}\setminus\R\to \HH$ as $\mathcal{J}(x)=\frac{\Im(x)}{|\Im(x)|}$, for all $x\in\HH\setminus\R$.
Sometimes, if $x=\alpha+I\beta$, we will also write $\mathcal{J}(x)=I$.

Let now $J\in\SF$ and $x=\alpha+I\beta\in\HH\setminus\R$, with $\beta>0$. We define $\ell^{+,J}:\HH\setminus\R\to\HH$, $\ell^{-,J}:\HH\setminus\R\to\HH$ as
$$
\ell^{+,J}(x)=\frac{1-\mathcal{J}(x)J}{2},\qquad\ell^{-,J}(x)=\frac{1+\mathcal{J}(x)J}{2}
$$
\end{definition}

As explained in~\cite{A-dFLAA,A-S} these functions $\ell^{\pm,J}$ are idempotents for the slice product and any slice regular function $f$ defined on a symmetric domain without real points can be written
as $f=f_{+}\cdot \ell^{+,J}+f_{-}\cdot \ell^{-,J}$, where $f_{+}$ and $f_{-}$ are slice regular functions defined on the same domain of $f$ (this is called Peirce decomposition). 
The important role of idempotent function was also exploited in~\cite{M-S} in order to characterize some relevant space of measurable regular functions.
Before going into the details, recall that, for any $N\in\mathbb{N}$, the notation $N!!$ stands for the double factorial, i.e. 
$$
N!!=\prod_{k=0}^{\lceil \frac{n}{2}\rceil-1}(N-2k)=N(N-2)(N-4)\cdots
$$

\begin{corollary}\label{idempotents}
Let $J\in\SF$ be any imaginary unit and $x=\alpha+I\beta\in\HH\setminus\R$, with $\beta>0$. Then the spherical coefficients of $\ell^{+,J}$ at $x$ are the following
$$
s_{0}^{+}(\alpha+I\beta)=\frac{1-IJ}{2},\quad\begin{cases}
s_{2k}^{+}(\alpha+I\beta)=-\frac{1}{k!}\frac{(2k-1)!!}{2^{k+1}}\frac{1}{\beta^{2k}}IJ,\\
s_{2k+1}^{+}(\alpha+I\beta)=-\frac{1}{k!}\frac{(2k-1)!!}{2^{k+1}}\frac{1}{\beta^{2k+1}}J.
\end{cases}
$$
Analogously, the spherical coefficients of $\ell^{-,J}$ at $x$ are the following
$$
s_{0}^{-}(\alpha+I\beta)=\frac{1+IJ}{2},\quad\begin{cases}
s_{2k}^{-}(\alpha+I\beta)=\frac{1}{k!}\frac{(2k-1)!!}{2^{k+1}}\frac{1}{\beta^{2k}}IJ,\\
s_{2k+1}^{-}(\alpha+I\beta)=\frac{1}{k!}\frac{(2k-1)!!}{2^{k+1}}\frac{1}{\beta^{2k+1}}J.
\end{cases}
$$
\end{corollary}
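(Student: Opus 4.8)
The plan is to invoke Theorem~\ref{sphcoeff}, which reduces the whole corollary to computing the iterated derivatives $(\de_{c}\de_{s})^{k}\ell^{+,J}$ and $\de_{s}(\de_{c}\de_{s})^{k}\ell^{+,J}$ and evaluating them at $q_{0}=\alpha+I\beta$. The first step is therefore to write $\ell^{+,J}$ as $\mathcal{I}(G_{0}+\sqrt{-1}G_{1})$. Since $\mathcal{J}(\alpha+I\beta)=I$ for $\beta>0$ and $\mathcal{J}(x^{c})=-I$, on the half-space $\beta>0$ we have $\ell^{+,J}(\alpha+I\beta)=\tfrac12(1-IJ)=\tfrac12+I\bigl(-\tfrac12 J\bigr)$, so that the stem is constant, namely $G_{0}\equiv\tfrac12$ and $G_{1}\equiv-\tfrac12 J$. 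The branch discontinuity at $\beta=0$ is harmless precisely because the domain $\HH\setminus\R$ contains no real points, which is also the reason $\ell^{+,J}$ is slice regular.

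Next I would transfer the computation to the stem level, where it becomes a one-variable problem in $\beta$. By Definition~\ref{sphericalderivativevalue} one has $\de_{s}\mathcal{I}(F_{0}+\sqrt{-1}F_{1})=\mathcal{I}(F_{1}/\beta)$, and the computation in the proof of Lemma~\ref{derivativesphvalue} gives, for a stem depending on $\beta$ alone, $\de_{c}\mathcal{I}(F_{0})=\mathcal{I}\bigl(\sqrt{-1}\,(-\tfrac12\de_{\beta}F_{0})\bigr)$. The crucial simplification is that every stem arising in the iteration depends only on $\beta$, so all $\de_{\alpha}$-contributions vanish and each application of $\de_{c}\de_{s}$ simply toggles between the ``$F_{0}$-slot'' and the ``$\sqrt{-1}F_{1}$-slot'' while performing the real operation $F_{0}\mapsto-\tfrac12\de_{\beta}(F_{0}/\beta)$ on a coefficient of the form $(\text{constant})\cdot\beta^{-m}$.

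I would then prove by induction on $k\geq1$ that $(\de_{c}\de_{s})^{k}\ell^{+,J}=\mathcal{I}\bigl(\sqrt{-1}\,a_{k}J\beta^{-2k}\bigr)$ with $a_{k}=-\tfrac{(2k-1)!!}{2^{k+1}}$. The base case $k=1$ is the direct computation $\de_{c}\de_{s}\ell^{+,J}=\mathcal{I}\bigl(\sqrt{-1}\,(-\tfrac{J}{4\beta^{2}})\bigr)$. For the inductive step, applying $\de_{s}$ then $\de_{c}$ sends $\sqrt{-1}\,a_{k}J\beta^{-2k}$ first to $a_{k}J\beta^{-2k-1}$ and then to $\sqrt{-1}\cdot\tfrac{2k+1}{2}a_{k}J\beta^{-2k-2}$, giving the recursion $a_{k+1}=\tfrac{2k+1}{2}a_{k}$, which together with $(2k+1)!!=(2k+1)(2k-1)!!$ yields the claimed closed form. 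Evaluating at $\alpha+I\beta$ (so the $\sqrt{-1}$-slot contributes a factor $I$) and dividing by $k!$ as in Theorem~\ref{sphcoeff} produces $s_{2k}^{+}$; one further application of $\de_{s}$ moves $a_{k}J\beta^{-2k}$ into the $F_{0}$-slot, where no extra $I$ appears, and produces $s_{2k+1}^{+}$. The cases $k=0$ reduce to $s_{0}^{+}=\ell^{+,J}(q_{0})=\tfrac{1-IJ}{2}$ and $s_{1}^{+}=(\de_{s}\ell^{+,J})(q_{0})=-\tfrac{J}{2\beta}$, matching the convention $(-1)!!=1$.

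Finally, the coefficients of $\ell^{-,J}$ require no new work. Since $\ell^{+,J}+\ell^{-,J}\equiv1$, and the operators $\de_{s}$, $\de_{c}$ and evaluation are all right $\HH$-linear, the spherical coefficients are additive; as the constant $1$ has $s_{0}=1$ and $s_{n}=0$ for $n\geq1$, we get $s_{n}^{-}=-s_{n}^{+}$ for $n\geq1$ and $s_{0}^{-}=1-s_{0}^{+}=\tfrac{1+IJ}{2}$, which is exactly the sign-reversed table. I expect the only delicate points to be fixing the correct branch and sign conventions in the stem of $\ell^{+,J}$ (in particular $\mathcal{J}(x^{c})=-I$), and carrying the half-integer factors through $\de_{c}$ in $\HH\otimes\C$ without conflating the complex unit $\sqrt{-1}$ with the quaternionic $I$; the double-factorial bookkeeping is the one spot where a closed form must be guessed and then confirmed by the recursion.
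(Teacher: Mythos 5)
Your proof is correct and follows essentially the same route as the paper: both reduce the statement to Theorem~\ref{sphcoeff} and run the same induction on $k$, with the recursion $a_{k+1}=\tfrac{2k+1}{2}a_{k}$ appearing in the paper as the explicit computation $\de_{c}\bigl(-\tfrac{(2(k-1)-1)!!}{2^{k}}\beta^{-(2k-1)}J\bigr)=-\tfrac{(2k-1)!!}{2^{k+1}}\beta^{-2k}IJ$; your only departures are cosmetic (working at the stem level rather than with the induced functions, and deducing the $\ell^{-,J}$ coefficients from $\ell^{+,J}+\ell^{-,J}\equiv 1$ instead of repeating the computation, which the paper dismisses as ``completely analogous'').
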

\begin{proof}
We will prove the result for $\ell^{+,J}$ as the argument for $\ell^{-,J}$ is completely analogous.
For $s_{0}^{+}$ there is nothing to prove. We have 
$$s_{1}^{+}(\alpha+I\beta)=(\de_{s}\ell^{+,J})(\alpha+I\beta)=-\frac{1}{2}\frac{J}{\beta},$$
and so
$$
s_{2}^{+}(\alpha+I\beta)=(\de_{c}\de_{s}\ell^{+,J})(\alpha+I\beta)=\de_{c}\left(-\frac{1}{2}\frac{J}{\beta}\right)=\frac{1}{2}\left(\frac{\de}{\de\alpha}-I\frac{\de}{\de\beta}\right)\left(-\frac{1}{2}\frac{J}{\beta}\right)=
-\frac{1}{2^{2}}\frac{1}{\beta^{2}}IJ.
$$
We now compute $s_{3}$
$$
s_{3}^{+}(\alpha+I\beta)=(\de_{s}\de_{c}\de_{s}\ell^{+,J})(\alpha+I\beta)=\de_{s}\left(-\frac{1}{2^{2}}\frac{1}{\beta^{2}}IJ\right)(\alpha+I\beta)=-\frac{1}{2^{2}}\frac{1}{\beta^{3}}J.
$$
Let now $k>1$ be fixed and assume that the result is true for all integers lower than $k$.
From Theorem~\ref{sphcoeff} we know that
$$
\begin{cases}
s_{2k}^{+}(\alpha+I\beta)=\frac{1}{k!}((\de_{c}\de_{s})^{k}\ell^{+,J})(\alpha+I\beta),\\
s_{2k+1}^{+}(\alpha+I\beta)=\frac{1}{k!}(\de_{s}(\de_{c}\de_{s})^{k}\ell^{+,J})(\alpha+I\beta),
\end{cases}
$$
therefore 
\begin{align*}
k!s_{2k}^{+}(\alpha+I\beta)=&((\de_{c}\de_{s})^{k}\ell^{+,J})(\alpha+I\beta)=(\de_{c}(\de_{s}(\de_{c}\de_{s})^{k-1}\ell^{+,J}))(\alpha+I\beta)\\
&=\de_{c}\left((k-1)!s_{2(k-1)+1}(\alpha+I\beta)\right)\\
&=\de_{c}\left(-\frac{(2(k-1)-1)!!}{2^{(k-1)+1}}\frac{1}{\beta^{2(k-1)+1)}}J\right)\\
&=\frac{1}{2}\left(-I\frac{\de}{\de\beta}\right)\left(-\frac{(2(k-1)-1)!!}{2^{(k-1)+1}}\frac{1}{\beta^{2k-1}}J\right)\\
&=\frac{1}{2}\left(\frac{(2(k-1)-1)!!}{2^{(k-1)+1}}\left(-\frac{2k-1}{\beta^{2k}}\right)IJ\right)\\
&=-\frac{(2k-1)!!}{2^{k+1}}\frac{1}{\beta^{2k}}IJ.
\end{align*}
Analogously, we have
$$
k!s_{2k+1}^{+}(\alpha+I\beta)=(\de_{s}(\de_{c}\de_{s})^{k}\ell^{+,J})(\alpha+I\beta)=\de_{s}\left(-\frac{(2k-1)!!}{2^{k+1}}\frac{1}{\beta^{2k}}IJ\right)=-\frac{(2k-1)!!}{2^{k+1}}\frac{1}{\beta^{2k+1}}J.
$$
\end{proof}

\begin{remark}
In~\cite[Example 5.6]{G-P-series} the authors gave a description of spherical coefficients $\tilde s_{n}$ for the function $2\ell^{+,J}$ at $x=J$. They found
$$
\tilde s_{0}=2,\qquad\begin{cases}
\tilde s_{2k+1}=4^{-k}\binom{2k}{k}(-J),\quad k\geq0,\\
\tilde s_{2k}=4^{-k}\binom{2k}{k},\quad k\geq1.
\end{cases}
$$
Our result is consistent as, at $x=J$ (and $\beta=1$), we have
$$
2s_{0}^{+}(J)=2\cdot\frac{1-J\cdot J}{2}=2=\tilde s_{0},\quad\begin{cases}
2s_{2k+1}^{+}(J)=2\cdot(-\frac{1}{k!}\frac{(2k-1)!!}{2^{k+1}}J)=-\frac{1}{k!}\frac{(2k-1)!!}{2^{k}}J,\\
2s_{2k}^{+}(J)=2\cdot(-\frac{1}{k!}\frac{(2k-1)!!}{2^{k+1}}JJ)=\frac{1}{k!}\frac{(2k-1)!!}{2^{k}}.
\end{cases}
$$
Now, thanks to standard properties of the double factorial, we have that 
$$(2k-1)!!=\frac{(2k)!}{2^{k}k!},$$
and hence
$$
\begin{cases}
2s_{2k+1}^{+}(J)=-\frac{1}{k!}\frac{(2k-1)!!}{2^{k}}J=-\frac{1}{k!}\frac{1}{2^{k}}\frac{(2k)!}{2^{k}k!}J=4^{-k}\binom{2k}{k}(-J)=\tilde s_{2k+1},\\
2s_{2k}^{+}(J)=\frac{1}{k!}\frac{(2k-1)!!}{2^{k}}=\frac{1}{k!}\frac{1}{2^{k}}\frac{(2k)!}{2^{k}k!}=4^{-k}\binom{2k}{k}=\tilde s_{2k}.
\end{cases}
$$
\end{remark}

\begin{remark}
Notice that, even though the functions $\ell^{\pm,J}$ are  slice constant (see~\cite{A-CVEE}), their spherical expansion is always infinite. The same is true for
slice polynomial functions, i.e. slice regular functions of the form $P=P_{+}\cdot \ell^{+,J}+P_{-}\cdot \ell^{-,J}$, where $P_{+}$ and $P_{-}$ are regular polynomials.
Using the linearity of the operators $(\de_{c}\de_{s})^{k}$ and $\de_{s}(\de_{c}\de_{s})^{k}$ and the formula in Proposition~\ref{main} one can compute all the coefficients
and, since those of $\ell^{\pm,J}$ are infinite, then also those of $P$ are, in general, infinite.
However, with some more information, it is possible to simplify a lot all the computation as the following examples show.
\end{remark}

\begin{example}\label{slicepol}
Let $P_{2n}$ be a quaternionic regular polynomial of degree $2n$ and $x=\alpha+I\beta\in\HH\setminus\R$. Then the spherical coefficients of the slice polynomial function $f=\ell^{+,J}\cdot P_{2n}:\HH\setminus\R\to\HH$ can be computed using Proposition~\ref{main}, Corollary~\ref{idempotents} and the fact that $\partial_{c}\ell^{+,J}\equiv 0$, as follows.
\begin{align*}
s_{2k}(x)=\frac{1}{k!}((\de_{c}\de_{s})^{k}f)(x)&=\frac{1}{k!}\Bigg\{\left(-\frac{(2k-1)!!}{2^{k+1}}\frac{1}{\beta^{2k}}IJ\right)\cdot (v_{s}P_{2n})(x)+\frac{1}{2}((\de_{c}\de_{s})^{k}P_{2n})(x)\\
&\quad+\frac{1}{2}\left[\sum_{h=0}^{k-1}\binom{k-1}{h}\left(-\frac{(2(k-(h+1))-1)!!}{2^{(k-(h+1))+1}}\frac{1}{\beta^{2(k-(h+1))+1}}J\right)\cdot (\de_{c}(\de_{s}\de_{c})^{h}P_{2n})(x)\right.\\
&\left.\quad\quad+\sum_{h=1}^{k-1}\binom{k-1}{h}\left(-\frac{(2(k-h)-1)!!}{2^{(k-h)+1}}\frac{1}{\beta^{2(k-h)}}IJ\right)\cdot ((\de_{s}\de_{c})^{h}P_{2n})(x)\right]\Bigg\}
\end{align*}
and if $k>n$ it further simplifies as
\begin{align*}
s_{2k}(x)&=\frac{1}{k!}\Bigg\{\left(-\frac{(2k-1)!!}{2^{k+1}}\frac{1}{\beta^{2k}}IJ\right)\cdot (v_{s}P_{2n})(x)\\
&\quad+\frac{1}{2}\left[\sum_{h=0}^{n-1}\binom{k-1}{h}\left(-\frac{(2(k-(h+1))-1)!!}{2^{(k-(h+1))+1}}\frac{1}{\beta^{2(k-(h+1))+1}}J\right)\cdot (\de_{c}(\de_{s}\de_{c})^{h}P_{2n})(x)\right.\\
&\left.\quad\quad+\sum_{h=1}^{n-1}\binom{k-1}{h}\left(-\frac{(2(k-h)-1)!!}{2^{(k-h)+1}}\frac{1}{\beta^{2(k-h)}}IJ\right)\cdot ((\de_{s}\de_{c})^{h}P_{2n})(x)\right]\Bigg\}
\end{align*}

Analogous considerations hold for $s_{2k+1}$, for $\ell^{-,J}$ and for an odd-degree polynomial $P$.
\end{example}

 Instead of expanding the previous example in all its cases, we analyze a specific case, that was extensively studied in~\cite{A-S} in relation to a geometric interpretation of slice regularity.
 
 \begin{example}
Let $R:\HH\setminus\R\to\HH$ be defined as $R(x)=-x^{2}\cdot \ell^{+,i}+x\cdot \ell^{-,i}$.  We will write the spherical coefficient in general and at $x_{0}=\frac{1}{2}+\frac{\sqrt{3}}{2}i$.
Let $x=\alpha+I\beta$, by direct inspection we have
$$
\de_{s}(-x^{2})=-2\alpha,\quad v_{s}(x^{2})=-(\alpha^{2}-\beta^{2}),\quad (\de_{c}\de_{s})(-x^{2})=-1,(\de_{s}\de_{c})(-x^{2})=-2,
$$
and so,
%
\begin{align*}
& &s_{0}(x_{0})=x_{0}^{c}=\frac{1}{2}-\frac{\sqrt{3}}{3}i,\\
 &s_{1}(x)=-\alpha+\frac{1}{2}+\frac{\alpha^{2}-\beta^{2}+\alpha}{2\beta}i, &s_{1}(x_{0})=0,\\
 &s_{2}(x)=-\frac{1}{2}-x\left(-\frac{i}{2\beta}\right)-(\alpha^{2}-\beta^{2})\left(-\frac{Ii}{4\beta^{2}}\right)+\frac{1}{2}\left(-\frac{i}{2\beta}\right)+\alpha\left(\frac{Ii}{4\beta^{2}}\right), &s_{2}(x_{0})=-1+\frac{\sqrt{3}}{3}i,\\
 &s_{3}(x)=-\frac{i}{2\beta}-(\alpha^{2}-\beta^{2})\left(-\frac{i}{4\beta^{3}}\right)+\alpha\left(\frac{i}{4\beta^{3}}\right), &s_{3}(x_{0})=\frac{\sqrt{3}}{3}i.
\end{align*}
%
Assume now $k\geq 2$, then, thanks to Proposition~\ref{main}, we have that
\begin{align*}
((\de_{c}\de_{s})^{k}R)(x)&=v_{s}(-x^{2})((\de_{c}\de_{s})^{k}\ell^{+,i})(x)+v_{s}(x)((\de_{c}\de_{s})^{k}\ell^{-,i})(x)\\
&\quad+\frac{1}{2}\left\{\de_{c}(-x^{2})(\de_{s}(\de_{c}\de_{s})^{k-1}\ell^{+,i})(x)+(k-1)((\de_{s}\de_{c})(-x^{2}))((\de_{c}\de_{s})^{k-1}\ell^{+,i})(x)\right.\\
&\quad\quad\quad\left.+\de_{c}(x)(\de_{s}(\de_{c}\de_{s})^{k-1}\ell^{-,i})
\right\}\\
&=(\alpha^{2}-\beta^{2})\left[\frac{(2k-1)!!}{2^{k+1}\beta^{2k}}Ii\right]+\alpha\left[\frac{(2k-1)!!}{2^{k+1}\beta^{2k}}Ii\right]\\
&\quad+\frac{1}{2}\left\{2x\left(\frac{(2(k-1)-1)!!}{2^{k}\beta^{2(k-1)+1}}i\right)+2(k-1)\left(\frac{(2(k-1)-1)!!}{2^{k}\beta^{2(k-1)}}Ii\right)+\frac{(2(k-1)-1)!!}{2^{k}\beta^{2(k-1)+1}}i\right\}\\
&=\frac{(2(k-1)-1)!!}{2^{k}\beta^{2(k-1)}}\left[\frac{1}{\beta}(\alpha+\frac{1}{2})+\left(k+\frac{2k-1}{2\beta^{2}}(\alpha^{2}-\beta^{2}+\alpha)\right)I\right]i.
\end{align*}
Therefore, 
$$
((\de_{c}\de_{s})^{k}R)(x_{0})=\frac{2^{k-2}}{3^{k-1}}(2(k-1)-1)!!\left[-k+\frac{2\sqrt{3}}{3}i\right]=\frac{2^{k-2}}{3^{k-1}}(2k-3)!!\left[-k+\frac{2\sqrt{3}}{3}i\right],
$$
and hence
$$
s_{2k}(x_{0})=\frac{2^{k-2}}{k!3^{k-1}}(2k-3)!!\left[-k+\frac{2\sqrt{3}}{3}i\right].
$$
Assuming again that $k\geq2$, for the odd coefficients, we have,

\begin{align*}
(\de_{s}(\de_{c}\de_{s})^{k}R)(x)&=v_{s}(-x^{2})(\de_{s}(\de_{c}\de_{s})^{k}\ell^{+,i})(x)+v_{s}(x)(\de_{s}(\de_{c}\de_{s})^{k}\ell^{-,i})(x)\\
&\quad+\frac{1}{2}\left\{(\de_{s}\de_{c})(-x^{2})(\de_{s}(\de_{c}\de_{s})^{k-1}\ell^{+,i})(x)+(k-1)((\de_{s}\de_{c})(-x^{2}))(\de_{s}(\de_{c}\de_{s})^{k-1}\ell^{+,i})(x)\right\}\\
&=v_{s}(-x^{2})(\de_{s}(\de_{c}\de_{s})^{k}\ell^{+,i})(x)+v_{s}(x)(\de_{s}(\de_{c}\de_{s})^{k}\ell^{-,i})(x)+\frac{k}{2}(\de_{s}\de_{c})(-x^{2})(\de_{s}(\de_{c}\de_{s})^{k-1}\ell^{+,i})(x)\\
&=(\alpha^{2}-\beta^{2})\left(\frac{(2k-1)!!}{2^{k+1}\beta^{2k+1}}i\right)+\alpha\left(\frac{(2k-1)!!}{2^{k+1}\beta^{2k+1}}i\right)+k\left(\left(\frac{(2(k-1)-1)!!}{2^{k}\beta^{2k-1}}i\right)\right)\\
&=\frac{(2k-3)!!}{2^{k}\beta^{2k-1}}\left[k+\frac{2k-1}{2\beta^{2}}(\alpha^{2}-\beta^{2}+\alpha)\right]i.
\end{align*}
Therefore, 
$$
((\de_{c}\de_{s})^{k}R)(x_{0})=\frac{2^{k-1}}{3^{k-1}\sqrt{3}}(2k-3)!!\left[ki\right]=\frac{2^{k-1}}{3^{k}}(2k-3)!!\left[\sqrt{3}ki\right],
$$
and hence
$$
s_{2k+1}(x_{0})=\frac{2^{k-1}}{k!3^{k}}(2k-3)!!\left[\sqrt{3}ki\right].
$$

\end{example}

\section{Spherical coefficients of the slice derivative}\label{slicederi}
Thanks to the results of the previous section, we are able to compute the coefficients of the spherical expansion of the slice derivative of a slice regular function.
Starting from Equation~\eqref{sphericalexpansion} we have
\begin{align}
\label{sphexpdec}\de_{c}f(x)&=s_{1}+\de_{c}(\Delta_{q_{0}}(x))s_{2}+\de_{c}(\Delta_{q_{0}}(x)(x-q_{0}))s_{3}+\dots+\de_{c}(\Delta_{q_{0}}^{k}(x))s_{2k}+\de_{c}((\Delta_{q_{0}}^{k}(x)(x-q_{0})))s_{2k+1}+\dots\\
&=s_{1}+(2(x-\Re(q_{0})))s_{2}+((x-q_{0})^{\cdot 2}+2\Delta_{q_{0}}(x))s_{3}+\dots\nonumber\\
&\quad\dots+(2k\Delta_{q_{0}}^{k-1}(x)(x-\Re(q_{0})))s_{2k}+\Delta_{q_{0}}^{k-1}(x)\left[k(x-q_{0})^{\cdot 2}+(k+1)\Delta_{q_{0}}(x)\right]s_{2k+1}+\dots\nonumber,
\end{align}
Now, if we denote by $s_{h}'$ the spherical coefficients of $\de_{c}f$, we already know, from Theorem~\ref{sphcoeff}, that
$$
\begin{cases}
s_{2k}'=\frac{1}{k!}((\de_{c}\de_{s})^{k}\de_{c}f)(q_{0}),\\
s_{2k+1}'=\frac{1}{k!}(\de_{s}(\de_{c}\de_{s})^{k}\de_{c}f)(q_{0}).
\end{cases}
$$
Therefore we have that
$$
s_{0}'=(\de_{c}f)(q_{0})=s_{1}+2(q_{0}-\Re(q_{0}))s_{2}=s_{1}+2\Im(q_{0})s_{2},
$$
and that 
$$
s_{1}'=(\de_{s}\de_{c}f)(q_{0})=2s_{2}-2\Im(q_{0})s_{3}.
$$

We can now state our result.
\begin{theorem}\label{coeffslder}
Let $f$ be a slice regular function defined on a symmetric domain $\Omega_{D}$ and let $q_{0}\in\Omega_{D}\setminus\R$. 
Then, we have that the coefficients $s_{n}'$ of the spherical expansion~\eqref{sphexpdec} are given as follows
$$
\begin{cases}
s'_{2k}=(2k+1)s_{2k+1}+(k+1)2\Im(q_{0})s_{2k+2},\\
s'_{2k+1}=(2k+2)s_{2k+2}-(k+1)2\Im(q_{0})s_{2k+3}.
\end{cases}
$$
\end{theorem}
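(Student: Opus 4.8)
The plan is to avoid differentiating anything in the main argument and instead exploit the \emph{uniqueness} of the spherical expansion. By Theorem~\ref{sphcoeff} the coefficients of any slice regular function are recovered from it by alternate derivatives evaluated at $q_0$, hence they are uniquely determined; consequently, to identify the coefficients $s_n'$ of $\de_c f$ it suffices to rewrite the right-hand side of~\eqref{sphexpdec} as a bona fide spherical series $\sum_{k}\Delta_{q_0}^k(x)[s_{2k}'+(x-q_0)s_{2k+1}']$ and to match the coefficient of each basis function $\Delta_{q_0}^m$ and $\Delta_{q_0}^m(x-q_0)$. First I would therefore re-expand the two building blocks $\de_c(\Delta_{q_0}^k)$ and $\de_c(\Delta_{q_0}^k(x)(x-q_0))$ already computed in~\eqref{sphexpdec} into this basis.

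Everything reduces to two algebraic identities. Since $\Re(q_0)=q_0-\Im(q_0)$, we have $x-\Re(q_0)=(x-q_0)+\Im(q_0)$. Moreover, writing the second factor as $x-q_0=(x-q_0^c)-2\Im(q_0)$ and recalling from Definition~\ref{characteristicpolynomial} that $\Delta_{q_0}=(x-q_0)\cdot(x-q_0^c)$, while the slice product by the constant $2\Im(q_0)$ is ordinary right multiplication, one obtains
\[
(x-q_0)^{\cdot 2}=(x-q_0)\cdot(x-q_0^c)-2(x-q_0)\Im(q_0)=\Delta_{q_0}-2(x-q_0)\Im(q_0).
\]
Substituting these into the terms of~\eqref{sphexpdec} yields, for every $k\geq 0$,
\[
\de_c(\Delta_{q_0}^k)=2k\,\Delta_{q_0}^{k-1}(x-q_0)+2k\,\Delta_{q_0}^{k-1}\Im(q_0),
\]
\[
\de_c(\Delta_{q_0}^k(x)(x-q_0))=(2k+1)\Delta_{q_0}^k-2k\,\Delta_{q_0}^{k-1}(x-q_0)\Im(q_0),
\]
where at $k=0$ the factor $k$ kills the spurious terms, reproducing $\de_c(\Delta_{q_0}^0)=0$ and $\de_c(x-q_0)=1$.

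Next I would substitute these into $\de_c f=\sum_{k\geq 0}\de_c(\Delta_{q_0}^k)\,s_{2k}+\sum_{k\geq 0}\de_c(\Delta_{q_0}^k(x)(x-q_0))\,s_{2k+1}$ and collect the coefficient of each basis function. The pure power $\Delta_{q_0}^m$ collects $(2m+1)s_{2m+1}$ from the odd sum at index $k=m$, together with $2(m+1)\Im(q_0)s_{2m+2}$ from the even sum at index $k=m+1$, so that $s_{2m}'=(2m+1)s_{2m+1}+2(m+1)\Im(q_0)s_{2m+2}$; the basis function $\Delta_{q_0}^m(x-q_0)$ collects $2(m+1)s_{2m+2}$ from the even sum at $k=m+1$ and $-2(m+1)\Im(q_0)s_{2m+3}$ from the odd sum at $k=m+1$, so that $s_{2m+1}'=2(m+1)s_{2m+2}-2(m+1)\Im(q_0)s_{2m+3}$. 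Renaming $m$ as $k$ and using $2(k+1)=2k+2$ gives precisely the claimed formulas, and the choice $m=0$ recovers the base cases $s_0'=s_1+2\Im(q_0)s_2$ and $s_1'=2s_2-2\Im(q_0)s_3$ computed just before the statement.

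The only genuinely delicate point is the non-commutative bookkeeping: since $q_0\notin\R$, the constant $\Im(q_0)$ is non-real and must be kept to the \emph{right} of the basis functions but to the \emph{left} of the coefficients $s_n$, exactly as in the statement. In particular the identity for $(x-q_0)^{\cdot 2}$ has to be derived through the slice product with $\Im(q_0)$ on the correct side, and one must never commute $\Im(q_0)$ past any $s_n$. A secondary, routine point is to justify the term-by-term regrouping of the two series, which is legitimate because the spherical expansion converges on a Cassini ball $U(q_0,r)\subset\Omega_D$ and each fixed power receives only finitely many contributions.
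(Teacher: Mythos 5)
Your proposal is correct and is essentially the paper's own proof: the paper likewise differentiates the expansion term by term, rewrites the result in the basis $\Delta_{q_0}^k$, $\Delta_{q_0}^k(x)(x-q_0)$ via the same identities $x-\Re(q_0)=(x-q_0)+\Im(q_0)$ and $(x-q_0)^{\cdot 2}=\Delta_{q_0}-2(x-q_0)\Im(q_0)$, and concludes by uniqueness of the spherical expansion. (The paper also gives a second, operator-theoretic proof in Appendix~\ref{alternativeproof}, evaluating $(\de_c\de_s)^k\de_c$ and $\de_s(\de_c\de_s)^k\de_c$ on the basis functions at $q_0$, which is genuinely different from yours.)
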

\begin{proof}
To prove the theorem is sufficient to compare the spherical expansion of the function $\de_{c}f$ near $q_{0}$
\begin{equation}\label{eq11}
\de_{c}f=\sum_{n\geq 0}\Delta_{q_{0}}^{n}(x)[s_{2n}'+(x-q_{0})s_{2n+1}'],
\end{equation}
with that obtained by deriving the expansion of $f$, i.e.
$$
\de_{c}f=\sum_{n\geq 0}\de_{c}\left(\Delta_{q_{0}}^{n}(x)[s_{2n}+(x-q_{0})s_{2n+1}]\right).
$$
We proceed to compute this last derivative:
\begin{align}
\de_{c}f&=\sum_{n\geq 0}\de_{c}\left(\Delta_{q_{0}}^{n}[s_{2n}+(x-q_{0})s_{2n+1}]\right)\nonumber\\
&=s_{1}+\sum_{n\geq 1}\left[n\Delta_{q_{0}}^{n-1}2(x-\Re(q_{0}))(s_{2n}+(x-q_{0})s_{2n+1})+\Delta_{q_{0}}^{n}s_{2n+1}\right]\nonumber\\
&=s_{1}+\sum_{n\geq 1}\left\{\Delta_{q_{0}}^{n-1}\left[2n(x-q_{0})(s_{2n}+(x-q_{0})s_{2n+1})+2n\Im(q_{0})(s_{2n}+(x-q_{0})s_{2n+1})+\Delta_{q_{0}}s_{2n+1}\right]\right\}\nonumber\\
&=s_{1}+\sum_{n\geq 1}\left\{\Delta_{q_{0}}^{n-1}\left[2n\Im(q_{0})s_{2n}+(x-q_{0})(2ns_{2n}-2n\Im(q_{0})s_{2n+1})+\Delta_{q_{0}}(2n+1)s_{2n+1}\right]\right\}\nonumber\\
&=\sum_{k\geq 0}\Delta_{q_{0}}^{k}\left[(2k+1)s_{2k+1}+(k+1)2\Im(q_{0})s_{2k+2}+(x-q_{0})((2k+2)s_{2k+2}-(k+1)2\Im(q_{0})s_{2k+3})\right]\label{eq22},
\end{align}
where we have used that
\begin{align*}
&2n(x-q_{0})(s_{2n}+(x-q_{0})s_{2n+1})+2n\Im(q_{0})(s_{2n}+(x-q_{0})s_{2n+1})+\Delta^{n}_{q_{0}}(x)s_{2n+1}=\\
&2n\Im(q_{0})s_{2n}+(x-q_{0})(2ns_{2n}+2n\Im(q_{0})s_{2n+1})+(x-q_{0})^{\cdot 2}2ns_{2n+1}+\Delta^{n}_{q_{0}}(x)s_{2n+1}=\\
&2n\Im(q_{0})s_{2n}+(x-q_{0})(2ns_{2n}+2n\Im(q_{0})s_{2n+1})+(x-q_{0})\cdot (x-q_{0}^{c}+q_{0}^{c}-q_{0})2ns_{2n+1}+\Delta^{n}_{q_{0}}(x)s_{2n+1}=\\
&2n\Im(q_{0})s_{2n}+(x-q_{0})(2ns_{2n}-2n\Im(q_{0})s_{2n+1})+\Delta^{n}_{q_{0}}(x)(2n+1)s_{2n+1}.
\end{align*}
Therefore, comparing Formula~\eqref{eq11} with Formula~\eqref{eq22}, by the uniqueness of the spherical expansion, we obtain the thesis.
\end{proof}

An alternative proof of the last theorem is given in Appendix~\ref{alternativeproof} where a much more computational approach is exploited.
The results in this appendix might be useful in further future applications.

\begin{remark}
Notice that, from the Formulas in Theorem~\ref{coeffslder},  if $\Im(q_{0})$ tends to zero along a slice, then we obtain the usual relations among the coefficients
of the power expansion of a function and of its first derivative. This happens as these two expansions coincide at real points.
\end{remark}


As an immediate byproduct of the last theorem, we can derive a countable family of differential equations that are satisfied by slice regular functions.
\begin{corollary}\label{infinite}
Let $f$ be a slice regular function defined on a symmetric domain $\Omega_{D}$ and let $x\in\Omega_{D}\setminus\R$. Then, for any integer $k\geq0$, $f$ satisfies the following equations
$$
\begin{cases}
(\de_{c}(\de_{s}\de_{c})^{k}f)(x)=(2k+1)(\de_{s}(\de_{c}\de_{s})^{k}f)(x)+2(k+1)\Im(x)((\de_{c}\de_{s})^{k+1}f)(x),\\
((\de_{s}\de_{c})^{k+1}f)(x)=2(k+1)v_{s}((\de_{c}\de_{s})^{k+1}f)(x)=
2(k+1)[(\de_{c}\de_{s})^{k+1}f)(x)-\Im(x)(\de_{s}(\de_{c}\de_{s})^{k+1}f)(x)].
\end{cases}
$$
\end{corollary}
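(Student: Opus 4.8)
The plan is to obtain both families of equations as a pointwise reformulation of the coefficient relations in Theorem~\ref{coeffslder}, read through the differential description of the spherical coefficients furnished by Theorem~\ref{sphcoeff}. The key structural remark is that $\de_{c}f$ is again slice regular on $\Omega_{D}$, so Theorem~\ref{sphcoeff} may be applied at once to $f$ and to $\de_{c}f$. Fixing an arbitrary $x\in\Omega_{D}\setminus\R$ and choosing $x$ itself as the centre of the spherical expansion, Theorem~\ref{sphcoeff} yields, for every $m\in\N$,
\begin{align*}
((\de_{c}\de_{s})^{m}f)(x)&=m!\,s_{2m}, & (\de_{s}(\de_{c}\de_{s})^{m}f)(x)&=m!\,s_{2m+1},\\
((\de_{c}\de_{s})^{m}\de_{c}f)(x)&=m!\,s'_{2m}, & (\de_{s}(\de_{c}\de_{s})^{m}\de_{c}f)(x)&=m!\,s'_{2m+1},
\end{align*}
where $s_{n}$ and $s'_{n}$ denote the spherical coefficients of $f$ and of $\de_{c}f$ at $x$.

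First I would simplify the operators attached to the coefficients of $\de_{c}f$ by means of the elementary word identities
$$
(\de_{c}\de_{s})^{k}\de_{c}=\de_{c}(\de_{s}\de_{c})^{k},\qquad \de_{s}(\de_{c}\de_{s})^{k}\de_{c}=(\de_{s}\de_{c})^{k+1},
$$
both of which hold because the two sides are the same alternating string in $\de_{s}$ and $\de_{c}$. Together with the table above, these identify the two left-hand sides appearing in the statement, namely $(\de_{c}(\de_{s}\de_{c})^{k}f)(x)$ and $((\de_{s}\de_{c})^{k+1}f)(x)$, with $k!\,s'_{2k}$ and $k!\,s'_{2k+1}$ respectively. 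Substituting the two relations of Theorem~\ref{coeffslder},
$$
s'_{2k}=(2k+1)s_{2k+1}+2(k+1)\Im(x)s_{2k+2},\qquad s'_{2k+1}=(2k+2)s_{2k+2}-2(k+1)\Im(x)s_{2k+3},
$$
and converting each surviving $s_{n}$ back into the corresponding differential operator through the table, produces the two equations of the statement.

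For the second equation a last rewriting is needed: I would invoke the identity $v_{s}g=g-\Im(x)\,\de_{s}g$, valid for any slice function $g=\mathcal I(F_{0}+\sqrt{-1}F_{1})$ and immediate from Definition~\ref{sphericalderivativevalue}, since at $x=\alpha+I\beta$ one has $\Im(x)\,\de_{s}g(x)=I\beta\cdot(F_{1}/\beta)=IF_{1}$, whence $g(x)-\Im(x)\de_{s}g(x)=F_{0}=v_{s}g(x)$. Applying this to $g=(\de_{c}\de_{s})^{k+1}f$ lets one recognise the bracket on the right as $v_{s}((\de_{c}\de_{s})^{k+1}f)(x)$, yielding the displayed double equality. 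The step demanding the most care, and the one I expect to be the genuine obstacle, is the factorial bookkeeping: the coefficients $s'_{2k}$ and $s_{2k+1}$ carry a factor $1/k!$, whereas $s_{2k+2}$ and $s_{2k+3}$ carry $1/(k+1)!$, so the multiplicities $m!$ attached to the various operators must be cancelled against these combinatorial prefactors exactly as dictated by Theorem~\ref{sphcoeff} in order to land on precisely the coefficients displayed in the statement.
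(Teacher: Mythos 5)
Your plan is the paper's own proof: the paper disposes of this corollary in one line (``It is immediate from Theorems~\ref{sphcoeff} and~\ref{coeffslder}''), and every ingredient you assemble toward that end is correct --- $\de_{c}f$ is slice regular, so Theorem~\ref{sphcoeff} applies to it; the word identities $(\de_{c}\de_{s})^{k}\de_{c}=\de_{c}(\de_{s}\de_{c})^{k}$ and $\de_{s}(\de_{c}\de_{s})^{k}\de_{c}=(\de_{s}\de_{c})^{k+1}$ hold; and $v_{s}g=g-\Im(x)\,\de_{s}g$ is the right way to produce the bracket in the second equation. The genuine gap is that you stop at exactly the step you yourself flag as ``the genuine obstacle'': you assert, without performing it, that the factorial bookkeeping ``produces the two equations of the statement''. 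It does not. Multiply the relations of Theorem~\ref{coeffslder} by $k!$ and convert via Theorem~\ref{sphcoeff}: the terms in $s_{2k+1}$ convert with factor $k!\cdot\frac{1}{k!}=1$, but the terms in $s_{2k+2}$ and $s_{2k+3}$ convert with factor $k!\cdot\frac{1}{(k+1)!}=\frac{1}{k+1}$, which cancels the factor $(k+1)$ present in Theorem~\ref{coeffslder}. What the computation actually yields is
\begin{align*}
(\de_{c}(\de_{s}\de_{c})^{k}f)(x)&=(2k+1)(\de_{s}(\de_{c}\de_{s})^{k}f)(x)+2\Im(x)((\de_{c}\de_{s})^{k+1}f)(x),\\
((\de_{s}\de_{c})^{k+1}f)(x)&=2\,v_{s}((\de_{c}\de_{s})^{k+1}f)(x)=2\left[((\de_{c}\de_{s})^{k+1}f)(x)-\Im(x)(\de_{s}(\de_{c}\de_{s})^{k+1}f)(x)\right],
\end{align*}
i.e.\ the displayed equations \emph{without} the extra factor $(k+1)$ on the right-hand sides.

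The discrepancy is not a slip you can patch: the statement as printed is itself false for $k\geq1$ (the two versions agree only at $k=0$, which is why your implicit consistency check against the relation quoted from~\cite{altavilladiff} raises no alarm). Take $f(x)=x^{4}$, $x=\alpha+I\beta$, $k=1$: one computes $\de_{s}(\de_{c}\de_{s})f=4\alpha$, $(\de_{c}\de_{s})^{2}f\equiv2$, $\de_{s}(\de_{c}\de_{s})^{2}f\equiv 0$, $\de_{c}(\de_{s}\de_{c})f=12\alpha+4I\beta$, $(\de_{s}\de_{c})^{2}f\equiv4$, hence $v_{s}((\de_{c}\de_{s})^{2}f)\equiv2$. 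The $(k+1)$-free equations give $12\alpha+4I\beta$ and $4$, matching the left-hand sides, while the printed ones give $12\alpha+8I\beta$ and $8$, which do not. So your argument, carried out to the end, proves a correct statement that contradicts the one you set out to prove; had you done the bookkeeping you identified as critical, you would have discovered this. (The same criticism applies to the paper's one-line proof: the corollary should read $2\Im(x)$ and $2v_{s}$ in place of $2(k+1)\Im(x)$ and $2(k+1)v_{s}$.)
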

\begin{proof}
It is immediate from Theorems~\ref{sphcoeff} and~\ref{coeffslder}.
\end{proof}

Notice that in~\cite{altavilladiff}, we found the first of such relations, namely
$$(\de_{c}f)(x)=(\de_{s}f)(x)+2\Im(x)((\de_{c}\de_{s})f)(x),$$

We now illustrate Theorem~\ref{coeffslder} with some examples.
\begin{example}
In Example~\ref{expel} we computed the spherical coefficients of the polynomial $P(x)=x^{3}-x^{2}(i+j+k)+x(i-j+k)+1$.
Its slice derivative is given by
$$
P'(x)=3x^{2}-2x(i+j+k)+i-j+k,
$$
therefore, if $x=\alpha+I\beta\in\HH\setminus\R$ denoting by $s'_{n}(x)$ the spherical coefficients of $P'$ at $x$, we have
$$
s'_{0}(i)=-1+i+j-k,\quad s'_{1}(x)=6\alpha-2(i+j+k),\quad s'_{2}(x)=3.
$$
Now we have that
\begin{align*}
s_{1}(x)+2\Im(x)s_{2}(x)&=3\alpha^{2}-\beta^{2}-2\alpha(i+j+k)+i-j+k+2I\beta(3\alpha-(i+j+k)+I\beta)\\
&=3\alpha^{2}-\beta^{2}-2\alpha(i+j+k)+i-j+k-2\beta^{2}-2I\beta(i+j+k)+6I\alpha\beta\\
&=3\alpha^{2}-3\beta^{2}+6I\alpha\beta-2(\alpha+I\beta)(i+j+k)+i-j+k\\
&=3x^{2}-2x(i+j+k)+i-j+k=s_{0}'(x).
\end{align*}
In particular, we have that $s_{0}'(i)=s_{1}(i)+2\Im(x)s_{2}(i)$.
Then we compute also 
\begin{align*}
2s_{2}(x)-2\Im(x)s_{3}(x)&=2(s_{2}(x)-\Im(x)s_{3})=2(3\alpha-(i+j+k)+I\beta-I\beta)\\
&=6\alpha-2(i+j+k)=s'_{1}(x).
\end{align*}
Finally
$$
3s_{3}(x)+4\Im(x)s_{4}(x)=3s_{3}(x)=3=s'_{2}(x).
$$
Of course, all these equalities hold for any $x\in\HH\setminus\R$ and so also at $i$.
\end{example}

\begin{example}
In Corollary~\ref{idempotents} we computed the spherical coefficients of the function $\ell^{+,J}$ as
$$
s_{0}^{+}(\alpha+I\beta)=\frac{1-IJ}{2},\quad\begin{cases}
s_{2k}^{+}(\alpha+I\beta)=-\frac{1}{k!}\frac{(2k-1)!!}{2^{k+1}}\frac{1}{\beta^{2k}}IJ,\\
s_{2k+1}^{+}(\alpha+I\beta)=-\frac{1}{k!}\frac{(2k-1)!!}{2^{k+1}}\frac{1}{\beta^{2k+1}}J.
\end{cases}
$$
Now, the slice derivative of $\ell^{+,J}$ is identically zero, and so, by Theorem~	\ref{coeffslder} we have
$$
\begin{cases}
(2k+1)s_{2k+1}^{+}+2(k+1)\Im(q_{0})s_{2(k+1)}^{+}=0,\\
2(k+1)s_{2(k+1)}^{+}-(k+1)2\Im(q_{0})s_{2(k+1)+1}^{+}=0.
\end{cases}
$$
Indeed, we have that 
\begin{align*}
(2k+1)s_{2k+1}^{+}+2(k+1)\Im(q_{0})s_{2(k+1)}^{+}&=(2k+1)\left(-\frac{1}{k!}\frac{(2k-1)!!}{2^{k+1}}\frac{1}{\beta^{2k+1}}J\right)\\
&\quad+2(k+1)I\beta\left(-\frac{1}{(k+1)!}\frac{(2k+1)!!}{2^{k+2}}\frac{1}{\beta^{2(k+1)}}IJ\right)\\
&=\left(-\frac{1}{k!}\frac{(2k+1)!!}{2^{k+1}}\frac{1}{\beta^{2k+1}}J\right)\\
&\quad+2(k+1)\left(\frac{1}{(k+1)!}\frac{(2k+1)!!}{2^{k+2}}\frac{1}{\beta^{2k+1}}J\right)=0,
\end{align*}
and also
\begin{align*}
2(k+1)s_{2(k+1)}^{+}-(k+1)2\Im(q_{0})s_{2(k+1)+1}^{+}&=2(k+1)\left(-\frac{1}{(k+1)!}\frac{(2k+1)!!}{2^{k+2}}\frac{1}{\beta^{2(k+1)}}IJ\right)\\
&\quad-(k+1)2I\beta \left(-\frac{1}{(k+1)!}\frac{(2k+1)!!}{2^{k+2}}\frac{1}{\beta^{2k+3}}J\right)=0.
\end{align*}
\end{example}

In the following two examples we show how, in some cases, Theorem~\ref{coeffslder} becomes an effective method to compute the spherical coefficients
of specific functions.

\begin{example}
In Example~\ref{slicepol} we computed the spherical coefficients of the slice polynomial function $R(x)=-x^{2}\ell^{+,i}+x\ell^{-,i}$.
Its slice derivative is the slice polynomial function $R'(x)=-2x\ell^{+,i}+\ell^{-,i}$. Let us denote by $s'_{n}(x)$ the spherical coefficients
of $R'$ at $x$. As special point we chose $x_{0}=\frac{1}{2}+\frac{\sqrt{3}}{2}i$.
By direct inspection, we have that
\begin{align*}
& &s'_{0}(i)=-1-\sqrt{3}i,\\
&s'_{1}(x)=(\de_{s}R')(x)=-1-\frac{\alpha i}{\beta}+\frac{i}{2\beta}, &s'_{1}(i)=-1+\frac{2\sqrt{3}}{3}i,\\
&s'_{2}(x)=(\de_{c}\de_{s}R')(x)=\left(1+\frac{I}{2\beta}\right)\frac{i}{2\beta}, &s'_{2}(i)=-\frac{1}{3}+\frac{\sqrt{3}}{3}i.\\
\end{align*}
Now, by Proposition~\ref{main} and Corollary~\ref{idempotents} we obtain
$$
s'_{2k}(x)=\frac{(2k-3)!!}{k!2^{k}\beta^{2k-1}}\left[\frac{(1+2\alpha)(2k-1)}{2\beta}I-1\right]i,
$$
and
$$
s'_{2k+1}=\frac{(1+2\alpha)(2k-1)!!}{2^{k+1}\beta^{2k+1}}i,
$$
and so
$$
s'_{2k}(x_{0})=-\frac{2^{k-1}\sqrt{3}(2k-3)!!}{k!3^{k}}\left[\frac{\sqrt{3}}{3}(2k-1)+i\right],\quad s'_{2k+1}=\frac{2^{k+1}\sqrt{3}(2k-1)!!}{k!3^{k+1}}i.
$$
Indeed we have that
$$
s_{1}(x_{0})+2\Im(x_{0})s_{2}(x_{0})=0+2\frac{\sqrt{3}i}{2}\left(-1+\frac{\sqrt{3}}{3}i\right)=-1-\sqrt{3}i=s'_{0}(x_{0}),
$$
$$
2s_{2}(x_{0})-2\Im(x_{0})s_{3}(x_{0})=2\left[-1-\frac{\sqrt{3}}{3}i-\frac{\sqrt{3}i}{2}\frac{\sqrt{3}i}{3}\right]=-\frac{3}{2}-\frac{2\sqrt{3}}{3}i=s'_{1}(x_{0}),
$$
$$
3s_{3}(x_{0})+4\Im(x_{0})s_{4}(x_{0})=3\frac{\sqrt{3}}{3}i+4\frac{\sqrt{3}i}{2}\left(\frac{1}{6}\left(-2+\frac{2\sqrt{3}}{3}i\right)\right)=-\frac{2}{3}+\frac{\sqrt{3}}{3}i=s'_{2}(x_{0}),
$$
and so on.
\end{example}

\begin{example}
For any $x=\alpha+I\beta\in\HH$ consider the quaternionic exponential function $\exp:\HH\to\HH$
$$
\exp(x)=e^{\alpha}(\cos(\beta)+I\sin(\beta)).
$$
Of course, for any $x\in\HH$, we have that $\de_{c}\exp(x)=\exp(x)$. Hence the formula in Theorem~\ref{coeffslder} can be used to compute the spherical
coefficients of the quaternionic exponential at any non-real point, i.e. for any integer $k$ we have
$$
\begin{cases}
s_{2k}=(2k+1)s_{2k+1}+2(k+1)\Im(q_{0})s_{2(k+1)},\\
s_{2k+1}=2(k+1)s_{2(k+1)}-2(k+1)\Im(q_{0})s_{2(k+1)+1},
\end{cases}
$$
or, equivalently
$$
\begin{cases}
s_{2k+2}=(2(k+1)\Im(q_{0}))^{-1}[s_{2k}-(2k+1)s_{2k+1}],\\
s_{2k+3}=(2(k+1)\Im(q_{0}))^{-1}[(2k+2)s_{2k+2}-s_{2k+1}].
\end{cases}
$$
\end{example}

It was proven by Perotti in~\cite{Perotti} that, for any slice regular function $f$ the functions $\de_{s}f$ and $(\de_{c}\de_{s})f$ are harmonic as
real functions $\R^{4}\to\R^{4}$. Unfortunately such property does not hold anymore for other iterations as already $\de_{s}\de_{c}\de_{s}f$ is not harmonic, in general
as the following example show.
\begin{example}
Let $x=\alpha+I\beta\in\HH$, the function $f(x)=x^{5}$ satisfies
\begin{align*}
(\de_{s}\de_{c}\de_{s})x^{5}&=(\de_{s}\de_{c})(5\alpha^{4}-10\alpha^{2}\beta^{2}+\beta^{4})=\de_{s}\frac{1}{2}\left(\frac{\de}{\de\alpha}-I\frac{\de}{\de\beta}\right)(5\alpha^{4}-10\alpha^{2}\beta^{2}+\beta^{4})\\
&=\de_{s}\frac{1}{2}(20\alpha^{3}-20\alpha\beta^{2}-I(-20\alpha^{2}\beta+4\beta^{3}))=10\alpha^{2}-2\beta^{2}.
\end{align*}
The result is a slice function and, for a slice function, the real Laplacian can be written as~\cite{Perotti}
$$
\Delta=4(\bar\de_{c}-\de_{s})(\de_{c}+\de_{s}).
$$
Hence, to compute $\Delta((\de_{s}\de_{c}\de_{s})x^{5})$, we perform the following calculation
\begin{align*}
\Delta((\de_{s}\de_{c}\de_{s})x^{5})&=4(\bar\de_{c}-\de_{s})(\de_{c}+\de_{s})(10\alpha^{2}-2\beta^{2})\\
&=4(\bar\de_{c}-\de_{s})\frac{1}{2}\left(\frac{\de}{\de\alpha}-I\frac{\de}{\de\beta}\right)(10\alpha^{2}-2\beta^{2})\\
&=4(\bar\de_{c}-\de_{s})(10\alpha+I2\beta)=4[5-1-2]=8.
\end{align*}
\end{example}

\appendix
\section{A flow chart of mutual implications of the results in Section~\ref{alternate}}\label{flow}
We present here a flow chart showing the mutual implications of the results needed to prove our first main theorem. 
We decided to present such a graph to justify the order and the way the results are presented.
\begin{center}
\begin{tikzpicture}[node distance=3.5cm]
\node(prop)[startstop]{Proposition~\ref{main}};
\node(lemma5)[startstop, below of=prop]{Lemma~\ref{lemma5}};
\node(lemma2)[startstop, right of=lemma5]{Lemma~\ref{lemma2}};
\node(lemma3)[startstop,right of=prop]{Lemma~\ref{lemma3}};
\node(lemma8)[startstop, right of=lemma3]{Lemma~\ref{lemma8}};
\node(lemma6)[startstop, below of=lemma8]{Theorem~\ref{lemma6}};
\node(lemma7)[startstop, below of=lemma6]{Theorem~\ref{lemma7}};
\draw[arrow](prop)--(lemma2);
\draw[arrow](prop)--(lemma5);
\draw[arrow](lemma2)--(lemma6);
\draw[arrow](lemma2)--(lemma3);
\draw[arrow](lemma2)--(lemma8);
\draw[arrow](lemma2)--(lemma7);
\draw[arrow](lemma5)--(lemma7);
\draw[arrow](lemma6)--(lemma8);
\draw[arrow](lemma6)--(lemma7);
\draw[arrow](lemma3)--(lemma8);
\end{tikzpicture}
\end{center}

\section{Proof of Theorem~\ref{coeffslder} by explicit computations}\label{alternativeproof}

In this appendix we give an alternative proof of Theorem~\ref{coeffslder} in the same flavor of the computations given in Section~\ref{alternate}.
Such computations might be useful in the future to perform a much refined analysis of the previous results.

\begin{lemma}
Let $q_{0}\in\HH$ be any quaternion. For any integer $m\geq1$, we have the following equalities
$$
\de_{c}(\Delta_{q_{0}}^{m}(x))=2m\Delta_{q_{0}}^{m-1}(x)(x-\Re(q_{0})),\qquad \de_{c}(\Delta_{q_{0}}^{m}(x)(x-q_{0}))=\Delta_{q_{0}}^{m-1}(x)\left[m(x-q_{0})^{\cdot 2}+(m+1)\Delta_{q_{0}}(x)\right].
$$
\end{lemma}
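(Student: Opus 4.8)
The plan is to reduce both identities to the Leibniz rule for $\de_c$ recorded in Formula~\eqref{leibniz}, together with the elementary derivatives from Corollary~\ref{cor1}, namely $\de_c\Delta_{q_0}=2(x-\Re(q_0))$ and $\de_c(x-q_0)\equiv1$. Throughout I would exploit that $\Delta_{q_0}$ and $x-\Re(q_0)$ are slice preserving (their stem components are real-valued), so by the Remark following the definition of the slice product they commute with every slice function and may be freely pulled in and out of slice products.

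For the first identity I would argue by induction on $m$. The base case $m=1$ is exactly $\de_c\Delta_{q_0}=2(x-\Re(q_0))$ from Corollary~\ref{cor1}. For the inductive step I would write $\Delta_{q_0}^{m+1}=\Delta_{q_0}^m\cdot\Delta_{q_0}$ and apply the Leibniz rule:
\begin{align*}
\de_c(\Delta_{q_0}^{m+1})&=\de_c(\Delta_{q_0}^m)\cdot\Delta_{q_0}+\Delta_{q_0}^m\cdot\de_c\Delta_{q_0}\\
&=2m\Delta_{q_0}^{m-1}(x-\Re(q_0))\cdot\Delta_{q_0}+\Delta_{q_0}^m\cdot2(x-\Re(q_0)),
\end{align*}
and, since $\Delta_{q_0}$ is slice preserving, both summands equal $\Delta_{q_0}^m(x-\Re(q_0))$ times a scalar, giving $2(m+1)\Delta_{q_0}^m(x-\Re(q_0))$.

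For the second identity I would apply the Leibniz rule to $\Delta_{q_0}^m\cdot(x-q_0)$, inserting the first identity and $\de_c(x-q_0)\equiv1$:
$$
\de_c(\Delta_{q_0}^m(x)(x-q_0))=2m\Delta_{q_0}^{m-1}(x-\Re(q_0))\cdot(x-q_0)+\Delta_{q_0}^m.
$$
Factoring out the slice preserving factor $\Delta_{q_0}^{m-1}$, it then suffices to establish the slice product identity
$$
2(x-\Re(q_0))\cdot(x-q_0)=(x-q_0)^{\cdot 2}+\Delta_{q_0},
$$
since collecting the leftover $\Delta_{q_0}$ turns the right-hand side into $\Delta_{q_0}^{m-1}[\,m(x-q_0)^{\cdot 2}+(m+1)\Delta_{q_0}\,]$, as claimed.

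I expect this product identity to be the only genuine point. Here I would use $2\Re(q_0)=q_0+q_0^c$ to write $2(x-\Re(q_0))=(x-q_0)+(x-q_0^c)$, so that distributing the slice product gives $(x-q_0)^{\cdot 2}+(x-q_0^c)\cdot(x-q_0)$. Finally I would observe that $(x-q_0^c)\cdot(x-q_0)=\Delta_{q_0^c}$ by Definition~\ref{characteristicpolynomial}, and that $\Delta_{q_0^c}=\Delta_{q_0}$ because the explicit form $x^2-2\Re(q_0)x+|q_0|^2$ depends on $q_0$ only through $\Re(q_0)$ and $|q_0|$, both invariant under conjugation. The main subtlety to watch is the non-commutativity of the slice product: $(x-q_0)$ is \emph{not} slice preserving, so it must be kept on the right and handled only through the symmetry $\Delta_{q_0^c}=\Delta_{q_0}$, whereas $\Delta_{q_0}$ and $x-\Re(q_0)$ may be commuted at will.
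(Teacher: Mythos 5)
Your proof is correct and follows essentially the same route as the paper: both rest on the Leibniz rule for $\de_{c}$, the factorization $\de_{c}\Delta_{q_{0}}=(x-q_{0})+(x-q_{0}^{c})$, and the identity $(x-q_{0}^{c})\cdot(x-q_{0})=\Delta_{q_{0}}$. The only cosmetic difference is that you derive the power rule $\de_{c}(\Delta_{q_{0}}^{m})=m\Delta_{q_{0}}^{m-1}\de_{c}\Delta_{q_{0}}$ by an explicit induction, whereas the paper invokes it directly.
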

\begin{proof}
Both equalities are direct consequences of the Leibniz rule for the slice derivative $\de_{c}$. In particular
$$
\de_{c}(\Delta_{q_{0}}^{m}(x))=m\Delta_{q_{0}}^{m-1}(x)\de_{c}\Delta_{q_{0}}(x)=m\Delta_{q_{0}}^{m-1}(x)[(x-q_{0})+(x-\bar q_{0})]=2m\Delta_{q_{0}}^{m-1}(x)(x-\Re(q_{0})),
$$
while
\begin{align*}
\de_{c}(\Delta_{q_{0}}^{m}(x)(x-q_{0}))&=\de_{c}(\Delta_{q_{0}}^{m}(x))(x-q_{0})+\Delta_{q_{0}}^{m}(x)=m\Delta_{q_{0}}^{m-1}(x)[(x-q_{0})+(x-\bar q_{0})]\cdot (x-q_{0})+\Delta_{q_{0}}^{m}(x)\\
&=\Delta_{q_{0}}^{m-1}(x)\left[m(x-q_{0})^{\cdot 2}+(m+1)\Delta_{q_{0}}(x)\right].
\end{align*}
\end{proof}
Hence, while the slice derivative of $\Delta_{q_{0}}^{m}(x)$ is the product of $\Delta_{q_{0}}^{m-1}(x)$ times a polynomial of degree $1$, the slice derivative
of $\Delta_{q_{0}}^{m}(x)(x-q_{0})$ can be written as the product of $\Delta_{q_{0}}^{m-1}(x)$ times the degree $2$ polynomial 
\begin{equation}\label{frakdef}
\mathfrak{P}_{q_{0},m}(x):=m(x-q_{0})^{\cdot 2}+(m+1)\Delta_{q_{0}}(x).
\end{equation}
For future convenience, we collect in the next Lemma some of the properties of $\mathfrak{P}_{q_{0},m}$.
\begin{lemma}\label{frakp}
Let $q_{0}=\alpha_{0}+I_{0}\beta_{0}\in\HH$ and $m\geq 1$ be an integer and $\mathfrak{P}_{q_{0},m}$ be the slice regular polynomial defined in Formula~\eqref{frakdef}. Then we have the following equalities
\begin{align}
(v_{s}\mathfrak{P}_{q_{0},m})(q_{0})&=2m(\Im(q_{0}))^{2}\label{eq1frak}\\
(\de_{s}\mathfrak{P}_{q_{0},m})(q_{0})&=-2m\Im(q_{0})\label{eq2frak}\\
(\de_{c}\mathfrak{P}_{q_{0},m})(q_{0})&=2(m+1)\Im(q_{0})\label{eq3frak}\\
(\de_{c}\de_{s})\mathfrak{P}_{q_{0},m}&=2m+1\label{eq4frak}\\
(\de_{s}\de_{c})\mathfrak{P}_{q_{0},m}&=2(2m+1)\label{eq5frak}
\end{align}
\end{lemma}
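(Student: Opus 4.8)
The plan is to exploit the $\R$-linearity of each of the five operators $v_s$, $\de_s$, $\de_c$, $\de_c\de_s$ and $\de_s\de_c$, which reduces every displayed identity to the corresponding computation on the two constituents of $\mathfrak{P}_{q_{0},m}=m(x-q_{0})^{\cdot 2}+(m+1)\Delta_{q_{0}}$. For the summand $\Delta_{q_{0}}$ nothing new is required: Corollary~\ref{cor1} already records $(v_{s}\Delta_{q_{0}})(q_{0})=(\de_{s}\Delta_{q_{0}})(q_{0})=0$, $(\de_{c}\Delta_{q_{0}})(q_{0})=2(q_{0}-\Re(q_{0}))=2\Im(q_{0})$, $(\de_{c}\de_{s})\Delta_{q_{0}}\equiv 1$ and $(\de_{s}\de_{c})\Delta_{q_{0}}\equiv 2$. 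Hence the lemma follows once these same five quantities are known for $(x-q_{0})^{\cdot 2}$, each equality being then just the linear combination with weights $m$ and $m+1$.

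So the core step is to compute the operators on $(x-q_{0})^{\cdot 2}$. The cleanest route for the spherical value and spherical derivative is to pass to stem functions: $x-q_{0}$ is induced by $U=(\alpha-q_{0})+\sqrt{-1}\,\beta$, so $(x-q_{0})^{\cdot 2}=\mathcal{I}(U^{2})$ with $U^{2}=\big[(\alpha-q_{0})^{2}-\beta^{2}\big]+\sqrt{-1}\,2\beta(\alpha-q_{0})$, where $\sqrt{-1}$ and $\beta$ are central so no ordering issue arises. Reading off the two parts via Definition~\ref{sphericalderivativevalue} gives $v_{s}((x-q_{0})^{\cdot 2})=(\alpha-q_{0})^{2}-\beta^{2}$ and $\de_{s}((x-q_{0})^{\cdot 2})=2(\alpha-q_{0})$. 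The one algebraic fact that makes the evaluation at $q_{0}$ work is $\alpha_{0}-q_{0}=-\Im(q_{0})$, whence $(\alpha_{0}-q_{0})^{2}=(\Im(q_{0}))^{2}=-\beta_{0}^{2}$; this yields $v_{s}((x-q_{0})^{\cdot 2})(q_{0})=2(\Im(q_{0}))^{2}$ and $\de_{s}((x-q_{0})^{\cdot 2})(q_{0})=-2\Im(q_{0})$, and inserting these into the linear combination produces~\eqref{eq1frak} and~\eqref{eq2frak}. For $\de_{c}$ I would instead use the Leibniz rule~\eqref{leibniz} together with $\de_{c}(x-q_{0})\equiv 1$ from Corollary~\ref{cor1}, giving $\de_{c}((x-q_{0})^{\cdot 2})=2(x-q_{0})$, which vanishes at $q_{0}$; thus~\eqref{eq3frak} comes entirely from the $\Delta_{q_{0}}$ term.

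For the two second-order identities the point to observe is that applying the alternation twice to each constituent already returns a constant, so these equalities hold identically rather than only at $q_{0}$. Concretely, from $\de_{s}((x-q_{0})^{\cdot 2})=2(\alpha-q_{0})$ the slicewise form of the slice derivative used in Lemma~\ref{derivativesphvalue} gives $(\de_{c}\de_{s})((x-q_{0})^{\cdot 2})=\de_{c}(2(\alpha-q_{0}))\equiv 1$, while $\de_{s}\big(\de_{c}((x-q_{0})^{\cdot 2})\big)=\de_{s}(2(x-q_{0}))\equiv 2$ gives $(\de_{s}\de_{c})((x-q_{0})^{\cdot 2})\equiv 2$. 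Combining with the $\Delta_{q_{0}}$ values yields $(\de_{c}\de_{s})\mathfrak{P}_{q_{0},m}=m+(m+1)=2m+1$ and $(\de_{s}\de_{c})\mathfrak{P}_{q_{0},m}=2m+2(m+1)=2(2m+1)$, namely~\eqref{eq4frak} and~\eqref{eq5frak}. The only genuine subtlety, and the one place I would be careful, is that $v_{s}$ satisfies no naive Leibniz rule, so the spherical value of the slice square must be read from the stem $U^{2}$ rather than guessed from $v_{s}(x-q_{0})$; once the identity $\alpha_{0}-q_{0}=-\Im(q_{0})$ is in hand, the rest is routine bookkeeping.
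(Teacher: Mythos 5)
Your proof is correct and follows essentially the same route as the paper: decompose $\mathfrak{P}_{q_{0},m}$ linearly, dispose of the $\Delta_{q_{0}}$ summand via Corollary~\ref{cor1}, and compute the five operators on $(x-q_{0})^{\cdot 2}$ directly, with the evaluation at $q_{0}$ hinging on $\alpha_{0}-q_{0}=-\Im(q_{0})$ and $(\Im(q_{0}))^{2}=-\beta_{0}^{2}$. The only cosmetic difference is that you read $v_{s}$ and $\de_{s}$ of the slice square off the stem function $U^{2}$, whereas the paper evaluates $(g(x)+g(x^{c}))/2$ directly at $x=q_{0}$; both give the same numbers by the same underlying identity.
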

\begin{proof}
We prove all the equalities by direct inspection. We start from Formula~\eqref{eq1frak}. The spherical value of a slice function $g$ is defined as
$(v_{s}g)(x)=(g(x)+g(x^{c}))/2$, hence, since $(x-q_{0})^{\cdot 2}_{|_{x=q_{0}}}=0$ and $\Delta_{q_{0}}(q_{0})=\Delta_{q_{0}}(q_{0}^{c})=0$, we have
\begin{align*}
(v_{s}\mathfrak{P}_{q_{0},m})(q_{0})&=\frac{m((x-q_{0})^{\cdot 2})_{|_{x=q_{0}^{c}}}}{2}=\frac{m}{2}((q_{0}^{c})^{2}-2|q_{0}|^{2}+q_{0}^{2})\\
&=\frac{m}{2}2(\Re(q_{0}^{2})-|q_{0}|^{2})=m(-2\beta_{0}^{2})=2m(\Im(q_{0}))^{2}.
\end{align*}
We now pass to Formula~\eqref{eq2frak}. As the spherical derivative of $\Delta_{q_{0}}$ at $q_{0}$ is zero, we have
$$
(\de_{s}\mathfrak{P}_{q_{0},m})(q_{0})=(\de_{s}(m(x-q_{0})^{\cdot 2}))_{|_{x=q_0}} =m(2\Re(q_{0})-2q_{0})=-2m\Im(q_{0}).
$$

For Formula~\eqref{eq3frak} we have
\begin{align*}
(\de_{c}\mathfrak{P}_{q_{0},m})(q_{0})&=(\de_{c}(m(x-q_{0})^{\cdot 2}+(m+1)\Delta_{q_{0}}(x)))_{|_{x=q_{0}}}\\
&=(2m(x-q_{0})+(m+1)((x-q_{0})+(x-q_{0}^{c})))_{|_{x=q_{0}}}\\
&=(m+1)(q_{0}-q_{0}^{c})=2(m+1)\Im(q_{0}).
\end{align*}
Passing now to the mixed second derivatives, recalling Corollary~\ref{cor1} we have 
\begin{align*}
(\de_{c}\de_{s})\mathfrak{P}_{q_{0},m}&=m(\de_{c}\de_{s})(x-q_{0})^{\cdot 2}+(m+1)(\de_{c}\de_{s})\Delta_{q_{0}}\\
&=m\de_{c}(2\alpha-2q_{0})+(m+1)=m+m+1=2m+1,
\end{align*}
while
\begin{align*}
(\de_{s}\de_{c})\mathfrak{P}_{q_{0},m}&=m(\de_{s}\de_{c})(x-q_{0})^{\cdot 2}+(m+1)(\de_{s}\de_{c})\Delta_{q_{0}}\\
&=m\de_{s}(2x-2q_{0})+(m+1)2=2m+2m+2=2(2m+1),
\end{align*}
proving Formulas~\eqref{eq4frak} and~\eqref{eq4frak}.
\end{proof}

Now we are able to go forward in our principal quest. First of all we would need to compute $(\de_{c}\de_{s})^{k}(\de_{c}(\Delta_{q_{0}}^{m}))$ and $\de_{s}(\de_{c}\de_{s})^{k}(\de_{c}(\Delta_{q_{0}}^{m}))$, but these quantities, at least at $q_{0}$, were already computed in Lemmas~\ref{lemma5} and~\ref{lemma3}. We collect such information in the following remark.
\begin{remark}\label{remb3}
Let $q_{0}\in\HH$ and $m\in\mathbb{N}\setminus\{0\}$. Then, for any $0\leq k< m$ we have
$$
((\de_{c}\de_{s})^{k}(\de_{c}(\Delta_{q_{0}}^{m}(x))))_{|_{x=q_{0}}}=\begin{cases}
2\Im(q_{0})(m+1)!,\quad\mbox{ if } k=m-1,\\
0,\qquad\qquad\qquad\mbox{otherwise}.
\end{cases}
$$

Let $q_{0}\in\HH$ and $m\in\mathbb{N}\setminus\{0\}$. Then, for any $0\leq k< m$ we have
$$
(\de_{s}(\de_{c}\de_{s})^{k}(\de_{c}(\Delta_{q_{0}}^{m}(x))))_{|_{x=q_{0}}}=\begin{cases}
2m!,\quad\mbox{ if } k=m-1,\\
0,\qquad\qquad\mbox{otherwise}.
\end{cases}
$$
\end{remark}

We now pass to analyze $\de_{c}(\Delta_{q_{0}}^{m}(x)(x-q_{0}))$. Clearly, when $m=0$ we have
$\de_{c}(x-q_{0})=1$, while, for any $m\geq 1$, the equality
$$\de_{c}(\Delta_{q_{0}}^{m}(x)(x-q_{0}))=\Delta_{q_{0}}^{m-1}(x)\mathfrak{P}_{q_{0},m}(x),$$
implies that $(\de_{c}(\Delta_{q_{0}}^{m}(x)(x-q_{0})))_{|_{x=q_{0}}}=0$.
We also consider $(\de_{c}\de_{s})(\de_{c}(\Delta_{q_{0}}^{m}(x)(x-q_{0})))$.
Clearly, for $m=0$ the last vanishes identically. If $m=1$, then, thanks to Formula~\eqref{eq4frak},
\begin{equation}\label{eqbase}
(\de_{c}\de_{s})(\de_{c}(\Delta_{q_{0}}(x)(x-q_{0})))=(\de_{c}\de_{s})\mathfrak{P}_{q_{0},1}=3.
\end{equation}

Assume now that $m>1$. In this case, from Lemma~\ref{firstiteration}, we have 
\begin{align*}
(\de_{c}\de_{s})(\de_{c}(\Delta_{q_{0}}^{m}(x)(x-q_{0})))&=(\de_{c}\de_{s})(\Delta_{q_{0}}^{m-1}(x)\mathfrak{P}_{q_{0},m}(x))\\
&=((\de_{c}\de_{s})\Delta_{q_{0}}^{m-1}(x))v_{s}\mathfrak{P}_{q_{0},m}(x)+v_{s}\Delta_{q_{0}}^{m-1}(x)((\de_{c}\de_{s})\mathfrak{P}_{q_{0},m}(x))\\
&\quad\quad+\frac{1}{2}\left\{(\de_{s}\Delta_{q_{0}}^{m-1}(x))(\de_{c}\mathfrak{P}_{q_{0},m}(x))+(\de_{c}\Delta_{q_{0}}^{m-1}(x))(\de_{s}\mathfrak{P}_{q_{0},m})(x)\right\}.
\end{align*}
Therefore, when we evaluate at $x=q_{0}$, thanks to the fact that $(v_{s}\Delta_{q_{0}}^{m-1})(q_{0})=(\de_{s}\Delta_{q_{0}}^{m-1})(q_{0})=0$ and to Formulas~\eqref{eq1frak} and~\eqref{eq2frak}, we get
$$
((\de_{c}\de_{s})(\de_{c}(\Delta_{q_{0}}^{m}(x)(x-q_{0}))))_{|_{x=q_{0}}}=(((\de_{c}\de_{s})\Delta_{q_{0}}^{m-1}(x)))_{|_{x=q_{0}}}2m(\Im(q_{0}))^{2}+\frac{1}{2}((\de_{c}\Delta_{q_{0}}^{m-1}(x))_{|_{x=q_{0}}}(-2m\Im(q_{0}))).
$$
Now, if $m-1>1$, then everything vanishes, while if $m=2$, thanks to Theorem~\ref{lemma6} and to Remark~\ref{remb3}, we get
$$
((\de_{c}\de_{s})(\de_{c}(\Delta_{q_{0}}^{2}(x)(x-q_{0}))))_{|_{x=q_{0}}}=2m(\Im(q_{0}))^{2}+\frac{1}{2}(2\Im(q_{0})(-2m\Im(q_{0})))=0.
$$
We can now state the result in general.
\begin{proposition}\label{proposition3}
Let $q_{0}\in\HH$ and $m\in\mathbb{N}\setminus\{0\}$. Then, for any $0\leq k\leq m$ we have
$$
((\de_{c}\de_{s})^{k}(\de_{c}(\Delta_{q_{0}}^{m}(x-q_{0}))))_{|_{x=q_{0}}}=\begin{cases}
(2m+1)m!,\quad\mbox{ if } k=m,\\
0,\qquad\qquad\qquad\mbox{otherwise}.
\end{cases}
$$
\end{proposition}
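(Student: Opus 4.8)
The plan is to reduce the statement to a single application of the product formula of Proposition~\ref{main}. By the computation preceding the statement we already know that, for $m\geq 1$,
$$
\de_{c}(\Delta_{q_{0}}^{m}(x)(x-q_{0}))=\Delta_{q_{0}}^{m-1}(x)\,\mathfrak{P}_{q_{0},m}(x),
$$
so it suffices to evaluate $(\de_{c}\de_{s})^{k}(\Delta_{q_{0}}^{m-1}\cdot\mathfrak{P}_{q_{0},m})$ at $q_{0}$. The case $m=1$ is immediate, since the factor $\Delta_{q_{0}}^{0}$ is trivial and everything reduces to Lemma~\ref{frakp} (for $k=1$ this is exactly Formula~\eqref{eqbase}). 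Thus I would fix $m\geq 2$ and apply Proposition~\ref{main} with $f=\Delta_{q_{0}}^{m-1}$ and $g=\mathfrak{P}_{q_{0},m}$.

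The key simplification is that $g=\mathfrak{P}_{q_{0},m}$ has degree $2$: by Remark~\ref{lowering} all of its iterated $\de_{s},\de_{c}$ derivatives of total order $\geq 3$ vanish identically, while $(\de_{c}\de_{s})g$ and $(\de_{s}\de_{c})g$ are the constants $2m+1$ and $2(2m+1)$ of Lemma~\ref{frakp}. Consequently, in the two sums of Proposition~\ref{main} only the summands carrying $\de_{c}g$ (which forces $h=0$), $\de_{s}g$ (forcing $h=k-1$), $(\de_{s}\de_{c})g$ (forcing $h=1$) and $(\de_{c}\de_{s})g$ (forcing $h=k-1$) can survive, together with the spherical value term $((\de_{c}\de_{s})^{k}\Delta_{q_{0}}^{m-1})\cdot v_{s}g$. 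Since every iterated derivative of the slice preserving polynomial $\Delta_{q_{0}}^{m-1}$ is again slice preserving, each surviving product has a slice preserving left factor and therefore agrees with the ordinary pointwise product; hence at $q_{0}$ I may simply multiply the quaternionic values supplied by Theorem~\ref{lemma6}, Lemmas~\ref{lemma2},~\ref{lemma5},~\ref{lemma3} and Corollary~\ref{cor1}.

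I would then split into three regimes according to the index of the surviving $\Delta_{q_{0}}^{m-1}$-factor. For $k<m-1$ every candidate factor is evaluated away from the single index at which Theorem~\ref{lemma6} and Lemmas~\ref{lemma5},~\ref{lemma2},~\ref{lemma3} are non-zero, and the one remaining piece is killed by $v_{s}\Delta_{q_{0}}^{m-1}(q_{0})=0$; so the expression vanishes. For $k=m-1$ precisely two terms survive, namely $((\de_{c}\de_{s})^{m-1}\Delta_{q_{0}}^{m-1})(q_{0})\,v_{s}\mathfrak{P}_{q_{0},m}(q_{0})$ and $\tfrac12\,(\de_{c}(\de_{s}\de_{c})^{m-2}\Delta_{q_{0}}^{m-1})(q_{0})\,(\de_{s}\mathfrak{P}_{q_{0},m})(q_{0})$; inserting the values $(m-1)!$ and $2\Im(q_{0})(m-1)!$ for the two $\Delta$-factors (Theorem~\ref{lemma6} and Lemma~\ref{lemma3}) together with Formulas~\eqref{eq1frak} and~\eqref{eq2frak} and the identity $(\Im(q_{0}))^{2}=-|\Im(q_{0})|^{2}$, one finds $2m(m-1)!(\Im(q_{0}))^{2}-2m(m-1)!(\Im(q_{0}))^{2}=0$, an exact cancellation. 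Finally, for $k=m$ the spherical value and all first order cross terms die, because $(\de_{c}\de_{s})^{m}\Delta_{q_{0}}^{m-1}$, $\de_{s}(\de_{c}\de_{s})^{m-1}\Delta_{q_{0}}^{m-1}$ and $\de_{c}(\de_{s}\de_{c})^{m-1}\Delta_{q_{0}}^{m-1}$ vanish by Corollary~\ref{cor1} and Remark~\ref{lowering}; only the two second order terms carrying $(\de_{s}\de_{c})\mathfrak{P}_{q_{0},m}$ and $(\de_{c}\de_{s})\mathfrak{P}_{q_{0},m}$ remain, and with $(\de_{c}\de_{s})^{m-1}\Delta_{q_{0}}^{m-1}(q_{0})=(m-1)!$, $(\de_{s}\de_{c})^{m-1}\Delta_{q_{0}}^{m-1}(q_{0})=2(m-1)!$ and the binomial weights $\binom{m-1}{1}$, $\binom{m-1}{m-1}$ one gets $\tfrac12\big[(m-1)(m-1)!\cdot 2(2m+1)+2(m-1)!\,(2m+1)\big]=(2m+1)m!$.

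The delicate part is pure bookkeeping: pinning down, for each of the four admissible $g$-factors, the unique value of $h$ that survives, and then verifying that the $k=m-1$ cancellation and the $k=m$ combination produce exactly the stated constants. I expect the main obstacle to be the coincidence $h=1=m-1$ that occurs when $m=2$, where the two second order contributions merge into the single $h=1$ summand; this case I would dispatch by a direct evaluation, which reproduces $(2m+1)m!=10$ and so agrees with the general formula.
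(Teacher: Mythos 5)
Your proposal is correct and follows essentially the same route as the paper: rewrite $\de_{c}(\Delta_{q_{0}}^{m}(x)(x-q_{0}))$ as $\Delta_{q_{0}}^{m-1}(x)\mathfrak{P}_{q_{0},m}(x)$, apply Proposition~\ref{main} with the degree-$2$ factor $\mathfrak{P}_{q_{0},m}$ truncating both sums, and then use Theorem~\ref{lemma6} together with Lemmas~\ref{lemma2}, \ref{lemma3}, \ref{lemma5} and \ref{frakp} to get vanishing for $k<m$ (with the same exact cancellation at $k=m-1$) and the value $(2m+1)m!$ at $k=m$. The only cosmetic difference is that you dispose of $m=1$ (and the $m=2$ index coincidence) up front, whereas the paper treats $k=0,1$ separately before invoking Proposition~\ref{main} for $k>1$; all the surviving terms, weights and constants you identify agree with the paper's computation.
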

\begin{proof}
We already analyzed the cases when $k=0,1$ before. Assume then that $k>1$.
From Proposition~\ref{main}, we derive
\begin{align*}
(\de_{c}\de_{s})^{k}(\de_{c}(\Delta_{q_{0}}^{m}(x)(x-q_{0})))&=(\de_{c}\de_{s})^{k}(\Delta_{q_{0}}^{m-1}(x)\mathfrak{P}_{q_{0},m}(x))\\
&=((\de_{c}\de_{s})^{k}\Delta_{q_{0}}^{m-1}(x))v_{s}\mathfrak{P}_{q_{0},m}(x)\\
&\quad+\frac{1}{2}\left\{(\de_{s}(\de_{c}\de_{s})^{k-1}\Delta_{q_{0}}^{m-1}(x))\de_{c}\mathfrak{P}_{q_{0},m}(x)
+(\de_{c}(\de_{s}\de_{c})^{k-1}\Delta_{q_{0}}^{m-1}(x))\de_{s}\mathfrak{P}_{q_{0},m}(x)\right.\\
&\quad\left.+(k-1)((\de_{c}\de_{s})^{k-1}\Delta_{q_{0}}^{m-1}(x))((\de_{s}\de_{c})\mathfrak{P}_{q_{0},m}(x))+((\de_{s}\de_{c})^{k-1}\Delta_{q_{0}}^{m-1}(x))((\de_{c}\de_{s})\mathfrak{P}_{q_{0},m}(x))
\right\}.
\end{align*}
Now, for $x=q_{0}$, taking into account the formulas in Lemma~\ref{frakp}, we obtain
\begin{align*}
(\de_{c}\de_{s})^{k}(\Delta_{q_{0}}^{m}(x)(x-q_{0}))_{|_{x=q_{0}}}&=((\de_{c}\de_{s})^{k}\Delta_{q_{0}}^{m-1}(x))_{|_{x=q_{0}}}2m(\Im(q_{0}))^{2}\\
&\quad+\frac{1}{2}\left\{(\de_{s}(\de_{c}\de_{s})^{k-1}\Delta_{q_{0}}^{m-1}(x))_{|_{x=q_{0}}}2(m+1)\Im(q_{0})+\right.\\
&\quad+(\de_{c}(\de_{s}\de_{c})^{k-1}\Delta_{q_{0}}^{m-1}(x))_{|_{x=q_{0}}}(-2m\Im(q_{0}))\\
&\quad\left.+(k-1)((\de_{c}\de_{s})^{k-1}\Delta_{q_{0}}^{m-1}(x))_{|_{x=q_{0}}}2(2m+1)+((\de_{s}\de_{c})^{k-1}\Delta_{q_{0}}^{m-1}(x))_{|_{x=q_{0}}}(2m+1)
\right\}.
\end{align*}
Clearly, when $k<m-1$ everything vanishes. When $k=m-1$, using Theorem~\ref{lemma6} and Lemmas~\ref{lemma2}, ~\ref{lemma3}, ~\ref{lemma5}, we are left with
\begin{align*}
(\de_{c}\de_{s})^{m-1}(\Delta_{q_{0}}^{m}(x)(x-q_{0}))_{|_{x=q_{0}}}&=((\de_{c}\de_{s})^{m-1}\Delta_{q_{0}}^{m-1}(x))_{|_{x=q_{0}}}2m(\Im(q_{0}))^{2}+\\
&\quad\frac{1}{2}\left\{(\de_{c}(\de_{s}\de_{c})^{m-2}\Delta_{q_{0}}^{m-1}(x))_{|_{x=q_{0}}}(-2m\Im(q_{0}))\right\}\\
&=(m-1)!2m(\Im(q_{0}))^{2}+\frac{1}{2}\left\{2\Im(q_{0})(m-1)!(-2m\Im(q_{0}))\right\}=0.
\end{align*}
Finally, when $k=m$, we are left with
$$
(\de_{c}\de_{s})^{m}(\Delta_{q_{0}}^{m}(x)(x-q_{0}))_{|_{x=q_{0}}}=\frac{1}{2}\left\{(m-1)((\de_{c}\de_{s})^{m-1}\Delta_{q_{0}}^{m-1}(x))_{|_{x=q_{0}}}2(2m+1)+((\de_{s}\de_{c})^{m-1}\Delta_{q_{0}}^{m-1}(x))_{|_{x=q_{0}}}(2m+1)
\right\}.
$$
Therefore, by Lemma~\ref{lemma5} and Theorem~\ref{lemma6}, we obtain
\begin{align*}
(\de_{c}\de_{s})^{m}(\Delta_{q_{0}}^{m}(x)(x-q_{0}))_{|_{x=q_{0}}}&=\frac{1}{2}\left\{(m-1)(m-1)!2(2m+1)+2(m-1)!(2m+1)
\right\}\\
&=(m-1)(m-1)!(2m+1)+(m-1)!(2m+1)\\
&=(2m+1)(m-1)!(m-1+1)=(2m+1)m!
\end{align*}
\end{proof}
We are left with the following cases.
\begin{proposition}\label{proposition4}
Let $q_{0}\in\HH$ and $m\in\mathbb{N}\setminus\{0\}$. Then, for any $0\leq k< m$ we have
$$
(\de_{s}(\de_{c}\de_{s})^{k}(\de_{c}(\Delta_{q_{0}}^{m}(x)(x-q_{0}))))_{|_{x=q_{0}}}=\begin{cases}
-2\Im(q_{0})m!,\quad\mbox{ if } k=m-1,\\
0,\qquad\qquad\qquad\mbox{otherwise}.
\end{cases}
$$
\end{proposition}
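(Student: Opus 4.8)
The plan is to reduce everything to the factorization identity established just before Lemma~\ref{frakp}, namely $\de_{c}(\Delta_{q_{0}}^{m}(x)(x-q_{0}))=\Delta_{q_{0}}^{m-1}(x)\mathfrak{P}_{q_{0},m}(x)$ for $m\geq1$. Thus the quantity to evaluate is $(\de_{s}(\de_{c}\de_{s})^{k}(\Delta_{q_{0}}^{m-1}(x)\mathfrak{P}_{q_{0},m}(x)))_{|_{x=q_{0}}}$, a spherical derivative of an alternating second order operator applied to a slice product. I would first dispatch the small cases directly: for $m=1$ only $k=0$ is allowed, and there $\de_{s}(\de_{c}(\Delta_{q_{0}}(x)(x-q_{0})))=\de_{s}\mathfrak{P}_{q_{0},1}$, whose value at $q_{0}$ is $-2\Im(q_{0})$ by Formula~\eqref{eq2frak}, matching $-2\Im(q_{0})\,1!$ with $k=0=m-1$. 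The cases $k=0,1$ for general $m$ can likewise be checked by hand, exactly as in the discussion preceding Proposition~\ref{proposition3}.

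For the general step I would apply Proposition~\ref{main} to expand $(\de_{c}\de_{s})^{k}(\Delta_{q_{0}}^{m-1}\cdot\mathfrak{P}_{q_{0},m})$ and then apply the spherical Leibniz rule~\eqref{leibniz} once more, simplifying with~\eqref{basicslice} (i.e. $\de_{s}^{2}\equiv0$, $\de_{s}v_{s}\equiv0$, $v_{s}\de_{s}=\de_{s}$). This reproduces precisely the intermediate ``$\de_{s}(\de_{c}\de_{s})^{k}$ of a product'' formula that appears inside the proof of Proposition~\ref{main}, with $u=\Delta_{q_{0}}^{m-1}$ and $v=\mathfrak{P}_{q_{0},m}$. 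Writing it out, the $u$-factors that can occur are $\de_{s}(\de_{c}\de_{s})^{\bullet}\Delta_{q_{0}}^{m-1}$, $v_{s}\Delta_{q_{0}}^{m-1}$, and $(\de_{s}\de_{c})^{\bullet}\Delta_{q_{0}}^{m-1}$, each paired with a corresponding $v$-factor involving $\mathfrak{P}_{q_{0},m}$.

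Now I would evaluate at $q_{0}$ and track which terms survive. By Lemma~\ref{lemma2} every term carrying a factor $\de_{s}(\de_{c}\de_{s})^{\bullet}\Delta_{q_{0}}^{m-1}$ vanishes at $q_{0}$; for $m\geq2$ the factor $v_{s}\Delta_{q_{0}}^{m-1}$ also vanishes at $q_{0}$; and by Lemma~\ref{lemma5} a factor $(\de_{s}\de_{c})^{j}\Delta_{q_{0}}^{m-1}$ is nonzero at $q_{0}$ only when $j=m-1$, where it equals $2(m-1)!$. The crux of the argument is the index bookkeeping against the constraint $k<m$. In the first inner sum the factor $(\de_{s}\de_{c})^{h+1}\Delta_{q_{0}}^{m-1}$ contributes only when $h+1=m-1$, i.e. $h=m-2$, which lies in the range $\{0,\dots,k-1\}$ precisely when $k=m-1$; in the second inner sum the factor $(\de_{s}\de_{c})^{h}\Delta_{q_{0}}^{m-1}$ would require $h=m-1$ with $1\le h\le k-1$, which is impossible since $k<m$. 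Hence for $k<m-1$ nothing survives and the value is $0$, while for $k=m-1$ the unique surviving summand is the $h=m-2$ term of the first sum.

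Finally I would compute that single surviving contribution. It is $\tfrac12\binom{m-2}{m-2}\,\big((\de_{s}\de_{c})^{m-1}\Delta_{q_{0}}^{m-1}\big)(q_{0})\cdot\big(\de_{s}(\de_{c}\de_{s})^{0}\mathfrak{P}_{q_{0},m}\big)(q_{0})=\tfrac12\cdot 2(m-1)!\cdot(-2m\,\Im(q_{0}))=-2\,\Im(q_{0})\,m!$, using Lemma~\ref{lemma5} for the $\Delta_{q_{0}}^{m-1}$-factor and Formula~\eqref{eq2frak}, $(\de_{s}\mathfrak{P}_{q_{0},m})(q_{0})=-2m\,\Im(q_{0})$, for the $\mathfrak{P}_{q_{0},m}$-factor. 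This yields exactly the claimed value. The main obstacle is not any single computation but the careful combinatorial verification that, under $k<m$, the only term escaping the vanishing provided by Lemmas~\ref{lemma2} and~\ref{lemma5} is the one pairing $(\de_{s}\de_{c})^{m-1}\Delta_{q_{0}}^{m-1}$ with $\de_{s}\mathfrak{P}_{q_{0},m}$; once this is pinned down, the evaluation is a one-line product of known values.
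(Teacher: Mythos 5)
Your proof is correct and follows essentially the same route as the paper's: the same factorization $\de_{c}(\Delta_{q_{0}}^{m}(x)(x-q_{0}))=\Delta_{q_{0}}^{m-1}(x)\mathfrak{P}_{q_{0},m}(x)$, the same expansion via Proposition~\ref{main}, and the same identification of the unique surviving term $\tfrac12\cdot 2(m-1)!\cdot(-2m\Im(q_{0}))$ via Lemmas~\ref{lemma2}, \ref{lemma5} and Formula~\eqref{eq2frak}. The only (immaterial) difference is that the paper first collapses the sums using the fact that $\mathfrak{P}_{q_{0},m}$ has degree $2$ and then evaluates at $q_{0}$, whereas you evaluate at $q_{0}$ first and locate the surviving summand by index bookkeeping.
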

\begin{proof}
Firstly we address the case $k=0$. In this case we have
$$
(\de_{s}(\de_{c}(\Delta_{q_{0}}^{m}(x)(x-q_{0}))))=\de_{s}(\Delta_{q_{0}}^{m-1}(x)\mathfrak{P}_{q_{0},m}(x)),
$$
and as , for any $p>0$, we have that $(v_{s}\Delta_{q_{0}}^{p})(q_{0})=(\de_{s}\Delta_{q_{0}}^{p})(q_{0})=0$, the only case to inspect is when
$m-1=0$, i.e., $m=1$. In this case, thanks to Formula~\eqref{eq2frak}, we have
$$
(\de_{s}(\de_{c}(\Delta_{q_{0}}(x)(x-q_{0}))))_{|_{x=q_{0}}}=\de_{s}(\mathfrak{P}_{q_{0},1})(q_{0})=-2\Im(q_{0}).
$$
Assume now that $k=1$ and $m>1$. In this case, from Lemma~\ref{firstiteration}, we have 
\begin{align*}
\de_{s}(\de_{c}\de_{s})(\de_{c}(\Delta_{q_{0}}^{m}(x)(x-q_{0})))&=\de_{s}(\de_{c}\de_{s})(\Delta_{q_{0}}^{m-1}(x)\mathfrak{P}_{q_{0},m}(x))\\
&=(\de_{s}(\de_{c}\de_{s})\Delta_{q_{0}}^{m-1}(x))v_{s}\mathfrak{P}_{q_{0},m}(x)+\\
&\quad +\frac{1}{2}\left\{(\de_{s}\Delta_{q_{0}}^{m-1}(x))(\de_{s}\de_{c}\mathfrak{P}_{q_{0},m}(x))+(\de_{s}\de_{c}\Delta_{q_{0}}^{m-1}(x))(\de_{s}\mathfrak{P}_{q_{0},m}(x))\right\}.
\end{align*}
If $m=2$, then, thanks to Lemma~\ref{lemma2} and the fact that $(\de_{s}\Delta_{q_{0}})(q_{0})=0$, at $q_{0}$, we have 
$$
(\de_{s}(\de_{c}\de_{s})(\de_{c}(\Delta_{q_{0}}^{2}(x)(x-q_{0}))))_{|_{x=q_{0}}}=\frac{1}{2}\left\{(\de_{s}\de_{c}\Delta_{q_{0}}(x))(\de_{s}\mathfrak{P}_{q_{0},2}(x))\right\}_{|_{x=q_{0}}}=\frac{1}{2}\left\{2(-4\Im(q_{0}))\right\}=-4\Im(q_{0}),
$$
where the last equalities are due to Corollary~\ref{cor1} and Formula~\eqref{eq2frak}.

Assume now that $k>0$ and $m>k$. In this case, thanks to Proposition~\ref{main}, we have
\begin{align*}
\de_{s}(\de_{c}\de_{s})^{k}(\de_{c}(\Delta_{q_{0}}^{m}(x)(x-q_{0})))&=\de_{s}(\de_{c}\de_{s})^{k}(\Delta_{q_{0}}^{m-1}(x)\mathfrak{P}_{q_{0},m}(x))\\
&=(\de_{s}(\de_{c}\de_{s})^{k}\Delta_{q_{0}}^{m-1}(x))v_{s}\mathfrak{P}_{q_{0},m}(x)\\
&\quad+\frac{1}{2}\left\{(\de_{s}(\de_{c}\de_{s})^{k-1}\Delta_{q_{0}}^{m-1}(x))(\de_{s}\de_{c})\mathfrak{P}_{q_{0},m}(x)
+((\de_{s}\de_{c})^{k}\Delta_{q_{0}}^{m-1}(x))\de_{s}\mathfrak{P}_{q_{0},m}(x)\right.\\
&\quad\left.+(k-1)(\de_{s}(\de_{c}\de_{s})^{k-1}\Delta_{q_{0}}^{m-1}(x))((\de_{s}\de_{c})\mathfrak{P}_{q_{0},m}(x))
\right\}\\
&=(\de_{s}(\de_{c}\de_{s})^{k}\Delta_{q_{0}}^{m-1}(x))v_{s}\mathfrak{P}_{q_{0},m}(x)\\
&\quad+\frac{1}{2}\left\{k(\de_{s}(\de_{c}\de_{s})^{k-1}\Delta_{q_{0}}^{m-1}(x))(\de_{s}\de_{c})\mathfrak{P}_{q_{0},m}(x)
+((\de_{s}\de_{c})^{k}\Delta_{q_{0}}^{m-1}(x))\de_{s}\mathfrak{P}_{q_{0},m}(x)\right\}.
\end{align*}
Now, for any combination of $h$ and $m$, thanks to Lemma~\ref{lemma2}, $(\de_{s}(\de_{c}\de_{s})^{h}\Delta_{q_{0}}^{m-1})$ vanishes at $q_{0}$. This is true, in particular, if $h=k, k-1$.
Moreover, thanks to Lemma~\ref{lemma5}, we have that $((\de_{s}\de_{c})^{k}\Delta_{q_{0}}^{m-1})(q_{0})$ vanishes whenever $k\neq m-1$.
Therefore, the only non-zero case is $k=m-1$, we get $2(m-1)!$. In such a case, thanks to Formula~\eqref{eq2frak}, we have
\begin{align*}
(\de_{s}(\de_{c}\de_{s})^{m-1}(\de_{c}(\Delta_{q_{0}}^{2}(x)(x-q_{0}))))_{|_{x=q_{0}}}&=\frac{1}{2}\left\{((\de_{s}\de_{c})^{m-1}\Delta_{q_{0}}^{m-1}(x))\de_{s}\mathfrak{P}_{q_{0},m}(q_{0})\right\}\\
&=\frac{1}{2}\left\{2(m-1)!(-m2\Im(q_{0}))\right\}=-2m!\Im(q_{0}).
\end{align*}
\end{proof}

As before, for the convenience of the reader, we collect here all the previous computations in a handy table.
All the cases that do not appear are the vanishing ones.
\begin{table}[h!]
\centering
\begin{tabular}{ |c||c|c|  }
 \hline
 \multicolumn{2}{|c|}{A summary of the previous results} \\
 \hline
 Result &Reference\\
 \hline
$((\de_{c}\de_{s})^{k}(\de_{c}(\Delta_{q_{0}}^{k+1}(x))))_{|_{x=q_{0}}}=2\Im(q_{0})(k+1)!$ & Lemma~\ref{lemma3}\\
$((\de_{c}\de_{s})^{k}(\de_{c}(\Delta_{q_{0}}^{k}(x)(x-q_{0}))))_{|_{x=q_{0}}}=(2k+1)k!$ & Proposition~\ref{proposition3}\\
$(\de_{s}(\de_{c}\de_{s})^{k}(\de_{c}(\Delta_{q_{0}}^{k+1}(x))))_{|_{x=q_{0}}}=2(k+1)!$ & Lemma~\ref{lemma5}\\
$(\de_{s}(\de_{c}\de_{s})^{k}(\de_{c}(\Delta_{q_{0}}^{k+1}(x)(x-q_{0}))))_{|_{x=q_{0}}}=-2\Im(q_{0})(k+1)!$ & Proposition~\ref{proposition4}\\
 \hline
\end{tabular}
\end{table}

\begin{proof}[Alternative proof of Theorem~\ref{coeffslder}]
The proof is quite straightforward after the previous results in this appendix.
In particular by checking the previous table, we have that 
\begin{align*}
s_{2k}'=\frac{1}{k!}((\de_{c}\de_{s})^{k}\de_{c}f)(q_{0})&=\frac{1}{k!}\left[((\de_{c}\de_{s})^{k}(\de_{c}(\Delta_{q_{0}}^{k}(x)(x-q_{0}))))_{|_{x=q_{0}}}s_{2k+1}+((\de_{c}\de_{s})^{k}(\de_{c}(\Delta_{q_{0}}^{k+1}(x))))_{|_{x=q_{0}}}s_{2(k+1)}\right]\\
&=\frac{1}{k!}\left[(2k+1)k!s_{2k+1}+2\Im(q_{0})(k+1)!s_{2(k+1)}\right]\\
&=(2k+1)s_{2k+1}+2(k+1)\Im(q_{0})s_{2(k+1)},
\end{align*}
while
\begin{align*}
s_{2k+1}'=\frac{1}{k!}(\de_{s}(\de_{c}\de_{s})^{k}\de_{c}f)(q_{0})&=\frac{1}{k!}\left[(\de_{s}(\de_{c}\de_{s})^{k}(\de_{c}(\Delta_{q_{0}}^{k+1}(x))))_{|_{x=q_{0}}}s_{2(k+1)}\right.\\
&\qquad\left.+(\de_{s}(\de_{c}\de_{s})^{k}(\de_{c}(\Delta_{q_{0}}^{k+1}(x)(x-q_{0}))))_{|_{x=q_{0}}}s_{2(k+1)+1}\right]\\
&=\frac{1}{k!}\left[2(k+1)!s_{2(k+1)}-2\Im(q_{0})(k+1)!s_{2(k+1)+1}\right]\\
&=2(k+1)s_{2(k+1)}-(k+1)2\Im(q_{0})s_{2(k+1)+1}.,
\end{align*}
concluding the proof.
\end{proof}

\bibliographystyle{amsplain}

\end{document}